\newcommand\blfootnote[1]{%
  \begingroup
  \renewcommand\thefootnote{}\footnote{#1}%
  \addtocounter{footnote}{-1}%
  \endgroup
}
\newtheorem{theorem}{Theorem}[section]
\newtheorem{proposition}[theorem]{Proposition}
\newtheorem{lemma}[theorem]{Lemma}
\newtheorem{corollary}[theorem]{Corollary}
\theoremstyle{definition}
\newtheorem{definition}[theorem]{Definition}
\newtheorem{remark}[theorem]{Remark}
\newtheorem{notation}[theorem]{Notation}
\newtheorem{example}[theorem]{Example}
\numberwithin{equation}{section}
\newcommand{\Q}{\mathbb Q}
\newcommand{\GL}{\text{GL}}
\DeclareMathOperator{\Gal}{Gal}
\DeclareMathOperator{\Aut}{Aut}
\DeclareMathOperator{\PGL}{{\rm PGL}}
\DeclareMathOperator{\Char}{char}
\newcommand{\nr}{{\scriptstyle\rm nr}}
\newcommand{\cA}{\mathcal{A}}
\newcommand{\cB}{\mathcal{B}}
\newcommand{\cC}{\mathcal{C}}
\renewcommand{\bar}{\overline}
\title{On the conductor of Ciani plane quartics}
 \author[1]{Irene Bouw}
 \author[2]{Nirvana Coppola}
 \author[3,4]{Elisa Lorenzo Garc\'ia}
 \author[4]{Anna Somoza}
 \affil[1]{Ulm University, Germany}
 \affil[2]{Vrije Universiteit Amsterdam, The Netherlands}
 \affil[3]{Univerisité de Neuchâtel, Switzerland}
 \affil[4]{Université de Rennes 1, France}
\begin{document}
	\maketitle
	\begin{abstract}
In this paper we determine the conductor exponent of non-special Ciani quartics at primes of potentially good reduction in terms of the Ciani invariants. As an intermediate step in order to do so, we provide a reconstruction algorithm to construct Ciani quartics with given invariants. We also discuss how to descend the provided model to be defined over the same field as the invariants. 
\end{abstract}

	\section{Introduction}\label{sec:intro}

\blfootnote{\textit{2020 Mathematics Subject Classification}. 11G20, 14G20, 14H10, 14H20, 14H45, 14H50, 14Q05.}
\blfootnote{\textit{Keywords}. Plane quartic curves, Ciani quartic curves, invariants, minimal discriminant, stable reduction, conductor.}
Let $(K, \nu)$ be a complete local field of characteristic zero with valuation $\nu$, whose residue field is an algebraically closed field $k$ of odd characteristic $p>2$. We start by recalling some facts on elliptic curves. For $j\in K$ there exists an elliptic curve $E/K$ with $j(E)=j. $ It has potentially good reduction if and only if $\nu(j(E))\geq 0$. Moreover, if the valuation of $j\in K$ is non-negative, there exists an elliptic curve $E_0/K$ with $j(E_0)=j$ that has good reduction over~$K$.  A motivation for the results in this paper is to explore whether  similar statements hold for  non-hyperelliptic curves of genus $3$, i.e.~plane quartic curves.

{The Dixmier--Ohno invariants $DO(Y)\in \bar{K}^{14}$, where $\bar{K}$ is an algebraic closure of $K$, are a set of invariants for   plane quartic curves $Y/\bar{K}$   that determine $\overline{K}$-isomorphism classes. One of these is the discriminant $\Delta(Y)$. 
In principle, one should be able to read off from the Dixmier--Ohno invariants  all information of $Y$ that only   depends on the $\overline{K}$-isomorphism class. It is known for example how to read off the  automorphism group $\Aut_{\overline{K}}(Y)$ from the Dixmier--Ohno invariants, but for other information it is less clear how to do this in practice.  

A first difference between elliptic curves and plane quartics is that the field of moduli need not be a field of definition. However, given a smooth quartic $Y/\overline{K}$ with  $|\Aut_{\overline{K}}(Y)|>2$ and a set $DO(Y)\in K^{14}$ of Dixmier--Ohno invariants, there exists a $K$-model of $Y$ with those (projective) invariants, see e.g.~\cite{LRRS}. }

In this paper, we restrict to the locus of Ciani quartics. A \emph{Ciani quartic} is a smooth quartic $Y$ whose automorphism group contains a subgroup $V\simeq C_2^2$ with $g(Y/V)=0$. We call a subgroup $V$ with these properties a \emph{Ciani subgroup}. Ciani quartics form a $3$-dimensional stratum in the moduli space of plane quartics. In fact, it is the largest-dimensional stratum in the moduli space of plane quartics with {$|\Aut_{\overline{K}}(Y)|>2$}, where we consider the stratification by automorphism group.  In our previous paper \cite{BCKKLS} we defined a set of invariants $\underline{I}=\underline{I}(Y)=(I_3,I_3', I_3'', I_6) \in K^4$ for Ciani quartics $(Y, V)$, where $V$ is a Ciani subgroup. The main result of \cite{BCKKLS} is a recipe to determine the type of the stable reduction from the Ciani invariants.  

This set of Ciani invariants is much smaller than the Dixmier--Ohno invariants, and therefore easier to work with in practice. It is convenient to consider the Ciani invariants as a point $[\underline{I}]\in \mathbb{P}_{1,1,1,2}(K)$ in a weighted projective space since this determines the $\bar{K}$-isomorphism class. We usually assume that all invariants have non-negative valuation and that at least one has valuation zero, i.e.~the invariants are \emph{normalised}. The discriminant of a Ciani quartic can be expressed in terms of the Ciani  invariants. For a set of Ciani invariants $\underline{I}$, we write $\Delta(\underline{I})=2^{20}I_3(I_3'')^4I_6^2$. It is the discriminant of a curve with (exactly) these invariants. If $\underline{I}$ is normalised, it can be considered as the minimal discriminant of curves in the corresponding $\overline{K}$-isomorphism class, i.e. having the minimal valuation among all the integral $\bar{K}$-models.

{In the present paper, we focus on a more arithmetic question, namely we study the conductor exponent of a Ciani quartic. Choose $\underline{I}\in K^4$ a normalised set of Ciani invariants with $\Delta(\underline{I})\neq 0$. 
Let $Y/K$ be a Ciani quartic with invariants $[\underline{I}]$. For Ciani quartics, the conductor exponent $f_p(Y)$ of $Y$ is zero if and only if  $Y$ has good reduction to characteristic $p$ (Corollary \ref{cor:conductor_good}). 
There may exist more than one non-($K$-)isomorphic $K$-model of $Y$, and the conductor exponent depends on the chosen  model, in general. One of our main results characterises, given a set of invariants $\underline{I}$, whether there exists a $K$-model  of $Y$ with good reduction, i.e.~with $f_p=0$, under the assumption that $\Aut_{\overline{K}}(Y)=V$. The last condition is equivalent to $\Aut_{\overline{K}}(Y)$ containing a unique Ciani subgroup. If this condition is satisfied, we say that $Y$  is \emph{non-special}. There are two complementary cases: if $Y$ has potentially good reduction to characteristic $p$, the reduction $\overline{Y}$ is either a smooth quartic (good quartic reduction) or hyperelliptic (good hyperelliptic reduction). In our proofs, we treat both cases separately.}

Our results are stated in terms of some extra invariants $Q$ and $R$, which are defined in Sections \ref{sec:recons} and \ref{sec:hyper_cond}. We refer to Section \ref{sec:recons} for a definition of $Q$ in terms of the Ciani invariants. It satisfies that $Y$ is non-special if and only if $Q\neq 0$, see Section \ref{sec:special}. It also occurs as the discriminant of a polynomial $\mathcal{P}$ of degree $3$, introduced in Section \ref{sec:recons}, which is important in the description of a field extension of $K$ over which we can find a model with stable reduction. The invariant $R$ is defined in Equation \ref{def:invR}.  It is also related to the extension of $K$ over which $Y$ acquires stable reduction. We can summarise our main result as follows. Here we state it for simplicity in the case where $K=\Q_p^{\nr}$ is the maximal unramified extension of $\Q_p$.

\begin{theorem}\label{thm:main}
 Let $p\neq 2,3$ and $K=\Q_p^{\nr}$. Let $\underline{I}\in K^4$ be a normalised set of invariants. Let $Y/\overline{K}$ be a Ciani quartic  with $\underline{I}(Y)=\underline{I}$. Assume that $Y$ is smooth and that $\Aut_{\overline{K}}(Y)=V$. 
\begin{itemize}
    \item[(I)] Assume that $p\nmid \Delta(\underline{I}).$
    \begin{itemize}
        \item[(a)] The curve $Y/\overline{K}$ has good quartic reduction.
        \item[(b)]  There exists a $K$-model  of $Y$ with good reduction if and only if $\nu(Q)=0.$
    \end{itemize}
\item[(II)] Assume that $\nu(I_3)=0, \nu(I_3')\geq e, \nu(I_3'')=2e, \nu(I_6)=3e$ for some $e>0.$
\begin{itemize}
    \item[(a)] The curve $Y/\overline{K}$ has good hyperelliptic reduction.
    \item[(b)] There exists a $K$-model of $Y$ with good reduction if and only if $e$ is even and the polynomial $\mathcal{P}$ splits completely over $K$. This is equivalent to $e$ being even, $\nu(Q)\equiv0,2,4\pmod{6}$ and $3\nu(R)>\nu(Q)$ if $\nu(Q)\not\equiv0\pmod{6}$.
\end{itemize}
\item[(III)] In all other cases, the curve $Y/\overline{K}$ has bad reduction.
\end{itemize}
 
\end{theorem}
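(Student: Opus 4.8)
The proof naturally splits into the three cases (I), (II), (III), which together are exhaustive: by the classification of stable reduction types from \cite{BCKKLS} in terms of the valuations of the normalised invariants $\underline{I}$, the only ways to have potentially good reduction are exactly the valuation patterns in (I) and (II), and every other normalised pattern forces bad reduction, giving (III). So the first step is to recall that classification and check that (I)$\cup$(II)$\cup$(III) partitions all possibilities; this also establishes (I)(a) and (II)(a), since \cite{BCKKLS} tells us precisely when the stable reduction is a smooth quartic versus a hyperelliptic curve.

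For part (I), where $p\nmid\Delta(\underline{I})$, the plan is: use the reconstruction algorithm of Section \ref{sec:recons} to write down an explicit $\overline{K}$-model $Y$ with the given invariants, whose defining equation has coefficients that are polynomials in $\underline{I}$ and (crucially) in a square root $\sqrt{Q}$. Reducing mod $p$ gives good quartic reduction since $\Delta(\underline{I})$ is a unit, proving (a). For (b): a $K$-model with good reduction exists iff one can twist away the obstruction, and the obstruction to descending the good-reduction model from $K(\sqrt{Q})$ to $K$ is measured by whether $\sqrt{Q}\in K$; since $K=\Q_p^{\nr}$, $\sqrt{Q}\in K \iff \nu(Q)$ is even, and since we additionally assume the invariants are normalised and $\Aut_{\overline K}(Y)=V$, the relevant case is $\nu(Q)=0$. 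One must argue that when $\nu(Q)>0$ is even the reduction degenerates (so $\nu(Q)=0$ is genuinely needed, not just $\nu(Q)$ even), using that $Q$ is the discriminant of the cubic $\mathcal P$ and that $\nu(Q)>0$ forces a coincidence among the branch points / a worse automorphism group, contradicting non-speciality or good reduction. The main subtlety here is the descent obstruction bookkeeping: showing the field of moduli $K$ is a field of definition of a good-reduction model precisely under $\nu(Q)=0$, which is where the hypothesis $\Aut_{\overline K}(Y)=V$ is used (uniqueness of the Ciani subgroup pins down the cocycle).

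For part (II), the curve has good hyperelliptic reduction, so over a suitable extension $K'/K$ one has a model reducing to a hyperelliptic genus-$3$ curve $y^2=f(x)$ with $f$ of degree $7$ or $8$ and $\Char k\neq 2$. The first task is to produce, via the reconstruction algorithm specialised to this valuation stratum, an explicit model and to identify the extension $K'$ needed for stable reduction in terms of $\mathcal P$: this is where $\mathcal P$ splitting over $K$ enters, its roots governing the coordinates in which the hyperelliptic model becomes integral with good reduction. A model with good reduction over $K$ itself requires two things: (i) the hyperelliptic structure descends, which after tracking the valuations forces the scaling parameter to be a square in $K$, i.e.\ $e$ even (with $e$ odd one picks up a ramified quadratic twist); and (ii) the roots of $\mathcal P$ lie in $K$, so that the normalising change of coordinates is defined over $K$. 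Then one translates ``$\mathcal P$ splits completely over $K=\Q_p^{\nr}$'' into valuation conditions: since $K$ is maximal unramified, $\mathcal P$ splits iff each root has integer valuation and the factorisation type is controlled by its Newton polygon; computing the Newton polygon of $\mathcal P$ from $\disc(\mathcal P)=Q$ and the subresultant/second invariant $R$ yields exactly ``$\nu(Q)\equiv 0,2,4\pmod 6$, and $3\nu(R)>\nu(Q)$ when $\nu(Q)\not\equiv 0\pmod 6$.'' The expected main obstacle is this last equivalence — carefully matching the combinatorics of the Newton polygon of the cubic $\mathcal P$ (a three-segment analysis depending on $\nu(Q)\bmod 6$ and on where $\nu(R)$ sits) against the clean arithmetic statement, and simultaneously ruling out, in the complementary subcases, the existence of any good-reduction $K$-model by exhibiting the residual quadratic or cubic ramification that must persist.

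Throughout, the reduction to $K=\Q_p^{\nr}$ is what makes ``splits completely'' purely a valuation statement (no inertia beyond tame, no residue-field obstructions), and the hypothesis $p\neq 2,3$ is used to keep the relevant extensions tame and to keep $2,3$ invertible in the formulas for $\Delta$, $Q$, $R$ and in the hyperelliptic normal form. The cleanest write-up order is: (1) recall the stable-reduction classification and deduce (a) in (I), (II) and all of (III); (2) handle (I)(b) via the explicit model and the $\sqrt Q$ descent obstruction; (3) handle (II)(b) in two steps — the $e$-even twist obstruction, then the Newton-polygon translation of ``$\mathcal P$ splits'' — citing Section \ref{sec:hyper_cond} for the definition and properties of $R$.
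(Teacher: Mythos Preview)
Your overall architecture matches the paper's: deduce (I)(a), (II)(a), (III) from the classification in \cite{BCKKLS}, then handle the existence/non-existence of a good $K$-model in (I)(b) and (II)(b) via the reconstruction of Section~\ref{sec:recons}. Two points, however, are genuine gaps.

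\medskip
\textbf{The $\sqrt{Q}$ descent picture in (I)(b) is wrong.} The reconstructed standard model $Y_1(\underline{I})$ lives over the \emph{splitting field} $L$ of the cubic $\mathcal{P}$, not over $K(\sqrt{Q})$; its coefficients are the roots $\cA,\cB,\cC$, not polynomials in $\sqrt{Q}$. Since $K$ is local, $[L:K]\in\{1,2,3\}$, and $\sqrt{Q}\in K$ only tells you that $\Gal(L/K)\subset A_3$, i.e.\ $[L:K]\in\{1,3\}$. Over $K=\Q_p^{\nr}$ every unit is a square, so $\sqrt{Q}\in K\iff \nu(Q)$ even, which neither forces $L=K$ nor captures the degree-$3$ case. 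More seriously, even when $L=K$ (so $\mathcal{P}$ splits over $K$), $\nu(Q)>0$ can occur: the roots then coincide modulo $p$, and you cannot rescale a single variable to clear the bad reduction as in your sketch. The paper's argument for $\nu(Q)=0\Rightarrow$ good $K$-model (Proposition~\ref{prop:smoothmodel_new}) uses that the distinct roots mod $p$ allow at most one of $\cA,\cB,\cC$ to have positive valuation, so a single diagonal rescaling works. For the converse $\nu(Q)>0\Rightarrow$ no good $K$-model, the paper does not argue via ``the reduction degenerates'' but computes, for each twist, the action of $\Gal(M/K)$ on the special fibre $\overline{Y}$ and applies the tame conductor formula $f_p=2g(Y)-2g(\overline{Y}/G)$ (Proposition~\ref{prop:fp}), obtaining $f_p=4$ for every $K$-model (Proposition~\ref{prop:val_equal}). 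Your plan is missing this tool entirely; without it you have no mechanism to exclude good-reduction twists in the cases $[L:K]=1$ with $\nu(Q)>0$, or $[L:K]=3$.

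\medskip
\textbf{The ``only if'' in (II)(b) needs the same tool.} Your identification of the two obstructions ($e$ odd giving a ramified quadratic twist of the hyperelliptic involution; $\mathcal{P}$ not split forcing a nontrivial permutation of coordinates) is morally right, but ``exhibiting the residual ramification that must persist'' is not a proof: you must still show that \emph{no} $K$-twist undoes it. The paper again does this by explicitly computing the inertia action on the good model $\mathcal{Y}$ of Proposition~\ref{prop:goodhyp_new}, reading off $g(\overline{Y}/\langle\sigma\rangle)$ case by case, and getting $f_p>0$ whenever $e$ is odd or $[L:K]>1$ (Proposition~\ref{prop:hyperconductor}). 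Your Newton-polygon route to translate ``$\mathcal{P}$ splits over $\Q_p^{\nr}$'' into conditions on $\nu(Q)$ and $\nu(R)$ is a legitimate alternative to the paper's direct computation in Lemma~\ref{lem:sethyper} (which normalises to $A=B=C=1$, writes $a=2+\pi^e a'$ etc., and computes $\nu(Q),\nu(R)$ in terms of $\nu(a'-b')$, $\nu(b'-c')$, $\nu(c'-a')$); either works, but the paper's approach avoids a case analysis on polygon shapes.
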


Our results are more precise. For the precise statements, we refer to Propositions \ref{prop:smoothmodel_new} and \ref{prop:val_equal} for (I),  to Proposition \ref{prop:hyperconductor} for (II), and to  Lemmas \ref{lem:badprimes} and \ref{lem:potgoodhyper} for (III). If the Ciani curve has potentially good reduction, i.e.~in the situation of (I) or (II),  we determine the minimal value for the conductor exponent $f_p$ among all $K$-models of $Y$. We also find the concrete model, together with the minimal field over which it has good reduction. In case (III), we prove that the conductor, for any $K$-model, will be always positive, see Corollary \ref{cor:conductor_good}. 

Theorem \ref{thm:main} points to a  further difference between elliptic curves and Ciani quartics (or plane quartics in general). Assume that the invariants $\underline{I}$ are normalised. In the case that $\nu(\Delta(\underline{I}))=0$ there need not exist a curve $Y/K$ with good reduction over the minimal field $K$ with those invariants. In fact, in the case of good quartic reduction,  we show that such a $K$-model exists if and only if the automorphism group  of the reduction $\overline{Y}$ is equal to $\Aut_{\overline{K}}(Y)$, which we assumed to be $V$.  In the case of good hyperelliptic reduction this is not true: the automorphism group of $\overline{Y}$ is always strictly larger than $V$, but it is possible for the conductor exponent to be $0$. However, if there is no $K$-model of $Y$ with good reduction over $K$, i.e.~with $f_p(Y)=0$, then the automorphism group of $\overline{Y}$ is strictly larger than the group generated by the hyperelliptic involution and the elements of the fixed Klein $4$-group $V$, see Remark \ref{rem:autospecial}.  

{The paper is structured as follows. In Section \ref{Sec:prop} we introduce Ciani plane quartics, their different models and invariants. In Section \ref{sec:reconst} we give a reconstruction algorithm to obtain a Ciani plane quartic with given Ciani invariants. We characterise when they are special and we compute their twists in the non-special case. In Section \ref{sec:stablered} we recall the basic definitions of stable reduction and the  conductor of a curve and we present the main results that will allow us to compute the conductor of a Ciani plane quartic. Our main results are stated and proved in Section \ref{sec:badred}. Finally, in Section \ref{Sec:bounds} we discuss how to use our results to bound the conductor of a Ciani plane quartic.}

\subsection*{Notation}
Throughout the paper, we use the following notation.
\begin{table}[h]
\small{    \begin{tabular}{rl}
$K$ & a complete local field, usually assumed to be $\mathbb{Q}_p^{\nr}$, with valuation $\nu$, \\
$Y_0$ & a $K$-model of a Ciani quartic as in Definition \ref{def:standard_model},\\
$Y_1$ & a standard model as in Definition \ref{def:standard_model}, \\
$\underline{I}=(I_3,I'_3,I''_3,I_6)\in K^4$ & the set of Ciani invariants as in Equation \ref{eq:inv},\\ $\Delta(\underline{I})=2^{20}I_3I''^4_3I^2_6$ & the discriminant as in Equation \ref{eq:disc},\\ 
$\underline{I}_\nu$ & the normalisation of $\underline{I}$ at $\nu$, Definition \ref{def:normalise},\\
$[\underline{I}]\in \mathbb{P}^3_{1,1,1,2}$ & the point corresponding to $\underline{I}\in K^4$ in weighted projective space,\\
$Y(\underline{I})/\bar{K}$ & a Ciani quartic with projective Ciani invariants $[\underline{I}]\in\mathbb{P}^3_{1,1,1,2}$,\\
$L/K$ & the field of decomposition of the polynomial $\mathcal{P}$ defined in Equation (\ref{eq:coeffP}),\\
$Y_0(\underline{I})/K$ & a Ciani quartic with Ciani invariants $[\underline{I}]$ as in Proposition~\ref{prop:reconsK},\\
$M/K$ & the unique minimal extension over which $Y(\underline{I})$ has stable reduction.
    \end{tabular}}
    \caption{Notation}
    \label{tab:notation}
\end{table}

{\subsection*{Acknowledgements}The research of the second author is supported by the NWO Vidi grant No. 639.032.613, New Diophantine Directions.  The research of the third author is partially funded by the Melodia ANR-20-CE40-0013 project. }

\section{Properties of Ciani quartics}\label{Sec:prop}
In this section, we recall some general properties of Ciani plane quartic curves (usually abbreviated as Ciani quartics), and introduce some useful notation.

\subsection{The standard model of a Ciani quartic}\label{sec:standard}

 Let $K$ be, as in the previous section.

\begin{definition}
	A \emph{Ciani plane quartic curve}, or Ciani quartic, is a smooth non-hyperelliptic curve $Y/K$ of genus $g=3$ such that there exists \(V\subset\Aut_{\overline{K}}(Y)\), with $V$ isomorphic to $C_2^2$ and \(g(Y/V) = 0\). A subgroup $V$ satisfying these conditions is called a \emph{Ciani subgroup}. 
\end{definition}

 The Ciani subgroup is part of the data of a Ciani quartic. If the Ciani subgroup is clear from the context, we will sometimes omit it from the notation. 
 We say that a Ciani quartic $(Y,V)$ is defined over $K$ if and only if $Y$ is defined over $K$ and $\,^\sigma V=V$ for all $\sigma\in\Gal(\overline{K}/K)$. 

Let $(Y, V)$ be a Ciani quartic. The map $\phi:Y\to Y/V=:X$ is branched above $6$ points. Let $\sigma\in V\setminus \{e\}$ one of the nontrivial elements. Then $\sigma$  generates the inertia above exactly $2$ of the branch points of $\phi$, and the genus of $Y/\langle\sigma\rangle$ is $1$.  

\begin{definition}\label{def:special}
    A Ciani quartic $Y$ is called \textit{non-special} if there is a unique Ciani subgroup. Otherwise, we call it \textit{special}. 
\end{definition}

The following lemma follows immediately from the classification of possible automorphism groups of smooth quartics, see for example \cite[Section 3.1]{LRRS} or \cite{Vermeulen}
for this. 

\begin{lemma}
A Ciani quartic is non-special if and only if  $\Aut_{\overline{K}}(Y)\simeq C_2^2$. In particular, every non-special Ciani quartic $(Y,V)$ is defined over a field $K$ if and only if the curve $Y$ is defined over $K$.
\end{lemma}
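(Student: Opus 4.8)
The plan is to reduce the statement entirely to the already-cited classification of automorphism groups of smooth plane quartics, and then use the definition of \emph{non-special} together with the compatibility of the Ciani subgroup with the Galois action. So the proof splits naturally into two parts: (i) the equivalence ``non-special $\iff \Aut_{\overline K}(Y)\simeq C_2^2$'', and (ii) the consequence that for non-special Ciani quartics the data of the Ciani subgroup carries no extra content over a field of definition of the curve.

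For part (i), first I would recall (or read off from \cite[Section 3.1]{LRRS} or \cite{Vermeulen}) the list of automorphism groups $G$ of smooth plane quartics which contain a Ciani subgroup, i.e.\ a subgroup $V\cong C_2^2$ with $g(Y/V)=0$. The key combinatorial fact is that in each such $G$ one can count the number of subgroups isomorphic to $C_2^2$ that act with a genus-zero quotient, and this number is $1$ exactly when $G\cong C_2^2$ itself (where the unique such subgroup is $G$); in every larger group on the list there are at least two distinct Ciani subgroups. Concretely, I would go through the finitely many cases: if $G\cong C_2^2$ there is nothing to prove since $V=G$ is the only candidate; if $G$ strictly contains a Ciani subgroup $V$, then either $G$ contains another conjugate of $V$, or $G$ has a second $C_2^2$-subgroup with genus-zero quotient coming from the structure of $G$ (for instance in $G\cong C_2^3$, $D_4$, $C_4\times C_2$, $S_4$, or the larger groups, one exhibits two explicit Klein four-subgroups each with rational quotient). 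This gives ``non-special $\implies \Aut_{\overline K}(Y)\cong C_2^2$''; the converse ``$\Aut_{\overline K}(Y)\cong C_2^2 \implies$ non-special'' is immediate, since then the Ciani subgroup must equal the whole automorphism group and is therefore unique.

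For part (ii), suppose $Y$ is a non-special Ciani quartic defined over $K$ in the weak sense that the underlying curve $Y$ is defined over $K$; I must show $\,^\sigma V=V$ for all $\sigma\in\Gal(\overline K/K)$, so that $(Y,V)$ is a Ciani quartic over $K$ in the sense defined above. This is where part (i) does the work: for $\sigma\in\Gal(\overline K/K)$, conjugation by $\sigma$ maps $\Aut_{\overline K}(Y)$ to $\Aut_{\overline K}(\,^\sigma Y)=\Aut_{\overline K}(Y)$ (since $\,^\sigma Y=Y$), and sends the Ciani subgroup $V$ to $\,^\sigma V$, which is again a subgroup isomorphic to $C_2^2$ with $g(Y/\,^\sigma V)=g(\,^\sigma(Y/V))=0$, hence again a Ciani subgroup. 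By non-specialness the Ciani subgroup is unique, so $\,^\sigma V=V$. The converse direction of the ``in particular'' is trivial: if $(Y,V)$ is defined over $K$ then in particular $Y$ is.

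I do not expect a genuine obstacle here; the statement is essentially bookkeeping on top of the quartic automorphism-group classification. The only mildly delicate point is part (i): one must be sure that \emph{every} automorphism group on the classification list that properly contains a $C_2^2$-subgroup with rational quotient in fact contains \emph{at least two} such subgroups. I would handle this by a short case analysis over the (short) list of relevant groups rather than a slick uniform argument, pointing to the explicit generators given in \cite{LRRS} or \cite{Vermeulen} so that the count of genus-zero $C_2^2$-quotients can be made by inspection of the ramification of the corresponding quotient maps.
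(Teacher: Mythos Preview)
Your proposal is correct and follows essentially the same approach as the paper. The paper's own proof is in fact a single sentence deferring entirely to the classification in \cite[Section 3.1]{LRRS} and \cite{Vermeulen}; you have simply spelled out the case analysis and the Galois-stability argument that this citation is meant to encode. One minor point: your illustrative list of groups ($C_2^3$, $C_4\times C_2$, \dots) does not quite match the groups that actually occur in the Ciani stratification (cf.\ Figure~\ref{fig:stratification}: $D_4$, $G_{16}$, $S_4$, $G_{48}$, $G_{96}$, $G_{168}$), so when you carry out the case-by-case check you should work with that list rather than the one you wrote.
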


\begin{figure}\label{eq:Non-HyperellipticStrata}
\begin{equation*}
\xymatrix{
    & &  & C_2^2 \ar@{-}[d] & & &  \dim = 3\\
  & & & D_4 \ar@{-}[ld] \ar@{-}[rd] & & &  \dim = 2 \\
  & &    G_{16} \ar@{-}[ld] \ar@{-}[rd] & & S_4 \ar@{-}[ld] \ar@{-}[rd]  & & \dim = 1\\
  & G_{48} & & G_{96} & & G_{168} &  \dim = 0
}
\end{equation*} 
\caption{Stratification by automorphisms of Ciani quartics}
\label{fig:stratification}
\end{figure}
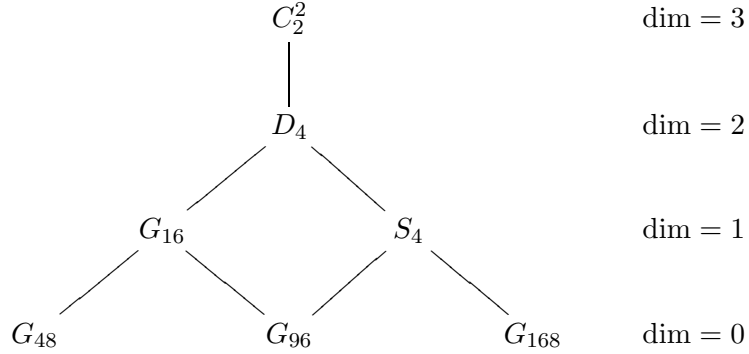

\begin{example}\label{exa:fieldD4}
We consider the Ciani quartic 
\[
Y_{a,b}/K:\; ax^4+y^4+z^4+bx^2y^2+xyz^2=0,
\]
with $a,b\in K$ such that its discriminant $\Delta(Y_{a,b})=2^{20}
(a-2)^2(a+2)^2(4a-b^2 -8)^4(4a-b^2 +8)^4\neq 0$.
The automorphism group of $Y$ contains the dihedral group $D_4$ of order $8$ as a subgroup generated by
\[
\left\langle \sigma_1=\begin{pmatrix}i&0&0\\0&-i&0\\0&0&1\end{pmatrix}, \,\, \sigma_2=\begin{pmatrix} 0&1/\sqrt[4]{a}&0\\\sqrt[4]{a}&0&0\\0&0&1\end{pmatrix}\right\rangle.
\]
and hence contains at least 2 Ciani subgroups: $V=\langle\sigma_1^2,\sigma_2 \rangle$ and $V'=\langle\sigma_1^2,\sigma_1\sigma_2 \rangle$. If $a\notin (K^*)^2$ then $V$ and $V'$ are (Galois-)conjugated subgroups over the quadratic extension $K(\sqrt{a})$ but not conjugated in  $\Aut_{\overline{K}}(Y_{a,b})\subset \PGL_3(\overline{K})$. Hence $Y_{a,b}$ is  defined over $K$ as a curve, but  the Ciani quartic $(Y_{a,b},V)$ is not.
\end{example}

\begin{remark} All special Ciani quartics belong to the family $Y_{a,b}$ in Example \ref{exa:fieldD4}, see \cite[Thm. 3.3]{LRRS}.
\end{remark}

\begin{definition}\label{def:standard_model}
Let $(Y, V)$ be a Ciani quartic over the algebraic closure $\overline{K}$ of  $K$. 
\begin{itemize}
    \item[(a)] A $K$-\emph{model} of $Y$ is a smooth quartic $Y_0/K$ such that $Y_0\otimes_K \overline{K}\simeq Y$. 
        \item[(b)]   A \emph{standard $K$-model}  of $(Y, V)$ is 
    a $K$-model  $Y_1$ of $Y$ given by an equation
    \begin{equation}\label{eq:standard_model}
    Y_1:\; Ax^4+By^4+Cz^4+ay^2z^2+bx^2z^2+cx^2y^2=0,
    \end{equation}
    such that the  elements of $V$ act as $(x:y:z)\mapsto (\pm x:\pm y:z). $
\end{itemize} 
\end{definition}

\begin{notation}\label{not:sigma} Let $Y_1$ be a standard model of a Ciani quartic.  We write
\[\begin{split}
\sigma_a(x:y:z)&=(-x:y:z),\\
\sigma_b(x:y:z)&=(x:-y:z),\\
\sigma_c(x:y:z)&=(-x:-y:z)
\end{split}
\]
for the nontrivial elements of $V$. For $i\in \{a,b,c\}$, we write $E_i=Y_1/\langle \sigma_i\rangle.$
\end{notation}

It is well-known that every Ciani quartic over a field of characteristic not $2$ admits a standard model over a finite extension.
In the following lemma, we give a bound on the field extension needed.

\begin{lemma}\label{lem:fod_standard}
Let $(Y_0, V)$ be a Ciani quartic over $K$. Then there exists a Galois extension $L/K$ with $\Gal(L/K)<S_3$ such that $Y_0$ admits a standard model $Y_1$ over $L$ and such that $Y_0\otimes_K L\simeq _L Y_1$.
\end{lemma}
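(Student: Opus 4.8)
The plan is to combine the cited existence of a standard model over $\overline{K}$ with a Galois-descent argument tracking how $\Gal(\overline{K}/K)$ permutes the three nontrivial elements of $V$. First I would fix such a model and transport it back to $Y_0\otimes_K\overline{K}\subset\mathbb{P}^2$: there is an $M_0\in\PGL_3(\overline{K})$ with $M_0(Y_0\otimes_K\overline{K})$ defined by an equation of the form \eqref{eq:standard_model} and with $M_0VM_0^{-1}$ acting as in Notation~\ref{not:sigma}. Index the nontrivial elements $\sigma_a,\sigma_b,\sigma_c$ of $V$ so that $M_0\sigma_aM_0^{-1},M_0\sigma_bM_0^{-1},M_0\sigma_cM_0^{-1}$ are the diagonal involutions fixing $(1:0:0),(0:1:0),(0:0:1)$ respectively, and let $P_a,P_b,P_c\in\mathbb{P}^2(\overline{K})$ be the fixed points of $\sigma_a,\sigma_b,\sigma_c$ acting on $Y_0\otimes_K\overline{K}$. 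Then $M_0$ carries $P_a,P_b,P_c$ to the three coordinate points, so these points are in general position; moreover $P_i$ spans the one-dimensional eigenspace of $\sigma_i$ and is therefore cut out by linear equations over any field of definition of $\sigma_i$, so the assignment $\sigma_i\mapsto P_i$ is Galois-equivariant.

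Since $Y_0$ is defined over $K$ and $\,^\tau V=V$ for all $\tau\in\Gal(\overline{K}/K)$, the Galois group acts on $\Aut_{\overline{K}}(Y_0)$ preserving $V$, hence permutes $\{\sigma_a,\sigma_b,\sigma_c\}$; this gives a homomorphism $\rho\colon\Gal(\overline{K}/K)\to S_3$. Put $N=\ker\rho$ and $L=\overline{K}^{\,N}$, so $L/K$ is finite Galois with $\Gal(L/K)\cong\rho(\Gal(\overline{K}/K))<S_3$. As the Galois action on $\Aut_{\overline{K}}(Y_0)$ is compatible with the semilinear action on $H^0(Y_0,\Omega^1)\otimes_K\overline{K}$, each $\sigma_i$ is defined over $L$ as an automorphism of $Y_0\otimes_KL$, whence $P_a,P_b,P_c\in\mathbb{P}^2(L)$, still in general position. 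I would then choose $M\in\GL_3(L)$ with $M(P_a)=(1:0:0)$, $M(P_b)=(0:1:0)$, $M(P_c)=(0:0:1)$ and set $Y_1=M(Y_0\otimes_KL)\subset\mathbb{P}^2_L$, an $L$-model of $Y_0$ with $Y_0\otimes_KL\simeq_LY_1$.

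The step I expect to be the main obstacle is checking that $Y_1$ is genuinely a standard model, i.e.\ that $MVM^{-1}$ acts as in Notation~\ref{not:sigma} and not merely preserves the coordinate triangle setwise. To settle it I would compare $M$ with $M_0$: both send $P_a,P_b,P_c$ to the same three coordinate points, so $MM_0^{-1}\in\PGL_3(\overline{K})$ fixes each coordinate point and is hence diagonal; since $M_0\sigma_iM_0^{-1}$ is diagonal and diagonal matrices commute, $M\sigma_iM^{-1}=(MM_0^{-1})(M_0\sigma_iM_0^{-1})(MM_0^{-1})^{-1}=M_0\sigma_iM_0^{-1}$, so $MVM^{-1}$ indeed acts as in Notation~\ref{not:sigma}. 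Then the defining quartic of $Y_1$ is $V$-invariant, so even in each of $x,y,z$, hence of the form \eqref{eq:standard_model}; this gives a standard model $Y_1$ of $Y_0$ over $L$ with $Y_0\otimes_KL\simeq_LY_1$ and $\Gal(L/K)<S_3$, as required. (Equivalently, one may run the descent with the common eigenlines of $V$ in $H^0(Y_0,\Omega^1)\otimes_K\overline{K}$ in place of the points $P_i$; that these three eigenlines carry the three nontrivial characters of $V$ is again supplied by the existence of a standard model over $\overline{K}$.)
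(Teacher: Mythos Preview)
Your argument is correct, but it follows a different route from the paper's. The paper works \emph{downstairs} on the quotient conic $X_0=Y_0/V$: the six branch points of $Y_0\to X_0$ fall into three pairs indexed by the nontrivial elements of $V$, and $L$ is the field over which each pair is rational; the three secant lines $\ell_a,\ell_b,\ell_c$ embed $X_0\otimes_KL$ into $\mathbb{P}^2_L$ as a conic in variables $u,v,w$, and setting $x^2=u,\,y^2=v,\,z^2=w$ produces the standard model. You instead work \emph{upstairs} in the canonical $\mathbb{P}^2$: you single out the isolated fixed point $P_i$ (the one-dimensional eigenspace) of each involution $\sigma_i$, observe that Galois permutes $\{P_a,P_b,P_c\}$ exactly as it permutes $\{\sigma_a,\sigma_b,\sigma_c\}$, and over the fixed field $L$ move these three points to the coordinate points by an $L$-rational change of coordinates. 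Both constructions yield the same $L$, namely the field over which $\Gal(\overline{K}/K)$ stops permuting the nontrivial elements of $V$. Your approach is pleasantly self-contained and purely linear-algebraic; the paper's approach has the advantage of making the branch divisor explicit, which is reused later (e.g.\ in identifying $L(\mathrm{A}[2])$ in the proof of Proposition~\ref{lem:stable_model}).

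One small wording correction: when you write ``the fixed points of $\sigma_a,\sigma_b,\sigma_c$ acting on $Y_0\otimes_K\overline{K}$'' you should say ``acting on $\mathbb{P}^2_{\overline{K}}$'' (on the curve itself each $\sigma_i$ has four fixed points); you do clarify immediately that you mean the one-dimensional eigenspace, so this is harmless.
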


\begin{proof}
Since $(Y_0,V)$ is defined over $K$, the map $\varphi: Y_0 \rightarrow X_0:=Y_0/V$ is defined over $K$, as well. The curve $X_0$ is a conic. We denote the branch locus of $\varphi$ by $D$. The elements of $V$ of order $2$, which we call $\sigma_a,\sigma_b,\sigma_c$, are each branched at a pair of points on $X_0\otimes_K \overline{K}$.  It follows that $\Gamma_K=\Gal(\overline{K}/K)$ acts on the branch locus $D$ by permuting the three pairs of points. Let $L$ be the field extension such that each pair is rational over $L$. Note that $\Gal(L/K)$ is a subgroup of $S_3$.
Over $L$ we may write $D$ as the sum of three effective divisors $D_a, D_b, D_c$, where $D_i$ is the divisor corresponding to the branch points with inertia generator $\sigma_i$.

We claim that $Y_0$ admits a standard model over $L$.  We write  $\ell_a,\ell_b,\ell_c$ for the three lines on $X_0 \otimes_K L$ with $(\ell_i)_0 = D_i$ for $i \in \{a,b,c\}$. The three lines are defined over $L$, and  define an embedding 
\[
\psi_L: X\otimes_K L\hookrightarrow \mathbb{P}^2_L.
\]
The coordinates $u, v, w$ of $\mathbb{P}^2_L$ define the images of the lines $\ell_a, \ell_b, \ell_c$, respectively. We denote the image of $X_0$ under $\psi_L$ by $X_1$. It is a conic with equation
\begin{equation}\label{eq:conic}
Au^2+Bv^2+Cw^2+avw+buw+cuv=0.
\end{equation}

We define three functions $x,y,z$ on $Y_0$ satisfying  $x^2=u, y^2=v, z^2=w $. The functions $(x,y,z)$ define an embedding
\[
\varphi_L: Y_0 \otimes_K L\to \mathbb{P}^2_L
\]
such that the image  $Y_1$ of $Y_0 \otimes_K L$ is given by
\[
Ax^4+By^4+Cz^4+ay^2z^2+bx^2z^2+cx^2y^2=0.
\]
Hence $Y_1$ is a standard $L$-model.
\end{proof}

\begin{remark} In the proof of previous lemma, one may alternatively characterise  the coordinates $x,y,z$ and the embedding $\varphi_L$ by the property that the nontrivial elements of $V$ are the diagonal matrices
\begin{equation}\label{eq:sigma}
\sigma_a=\begin{pmatrix} -1&0&0\\0&1&0\\ 0&0&1\end{pmatrix}, \quad \sigma_b=\begin{pmatrix} 1&0&0\\0&-1&0\\0&0&1\end{pmatrix}, \quad \sigma_c=\begin{pmatrix} 1&0&0\\0&1&0\\0&0&-1\end{pmatrix}.
\end{equation}
\end{remark}

\subsection{Ciani invariants}
The canonical embedding of a Ciani quartic $Y$ as a plane quartic in $\mathbb{P}^2$ only depends on the choice of a basis of $H^0(Y_{\overline{K}}, \Omega)$. Therefore, isomorphisms between Ciani quartics are induced by elements of $\GL_3(\overline{K}).$   If $\Char(K)\neq 2,3,5$, we have that two Ciani quartics are $\overline{K}$-isomorphic as plane quartics if and only they have the same Dixmier--Ohno invariants (see \cite[Thm. 4.1]{LLLR} for the cases of small characteristic).

In \cite[Section 3.1]{BCKKLS} we introduced a set of invariants for Ciani quartics $(Y, V)$ for $\Char(K)\neq 2$. 
For a standard model \eqref{eq:standard_model} $Y_1$ over a field $K$, the \emph{Ciani invariants} are defined by 
\begin{equation} \label{eq:inv}
\begin{split}
I_3 & = ABC \\
I'_3 & = A(a^2-4BC)+B(b^2-4AC)+C(c^2-4AB)\\
I''_3 & =-4ABC+Aa^2+Bb^2+Cc^2-abc\\
I_6 & =(a^2-4BC)(b^2-4AC)(c^2-4AB).
\end{split}
\end{equation}
These invariants are algebraically independent and generate the ring of invariants of the locus parametrising Ciani quartics in the moduli space of non-hyperelliptic curves of genus $3$. The index of the invariants indicates the degree, as used in Lemma \ref{trans}.
We write $\underline{I}(Y_1):=(I_3, I'_3, I''_3, I_6)\in K^4$  for the tuple of Ciani invariants of the standard model $Y_1/K$.
This tuple defines a point $[\underline{I}]$ in the weighted projective space $ \mathbb{P}^3_{1,1,1,2}$.
Two Ciani quartics are $\overline{K}$-isomorphic as Ciani quartics if and only their invariants define the same point in $\mathbb{P}^3_{1,1,1,2}(\overline{K})$. 

The discriminant of a standard model $Y_1$ (\ref{eq:standard_model}) is given by the formula:  
\begin{equation}\label{eq:disc}
\Delta(Y_1)=2^{20}I_3I''^4_3I^2_6.
\end{equation}
Note that this is an expression in terms of the invariants \eqref{eq:inv}: 
we sometimes denote it by $\Delta(\underline{I})$. 
The model $Y_1$ is smooth if and only if $\Delta(Y_1)\neq0$, see \cite{gelfand}.

In \cite{Coppola} one can find an expression for the Dixmier--Ohno invariants of a Ciani quartic in terms of the Ciani invariants. Since two non-isomorphic Ciani quartics $(Y, V_i)$ with the same underlying curve have different Ciani invariants, but the same Dixmier--Ohno invariants, it is not possible to express the Ciani invariants in terms of the Dixmier--Ohno invariants in general. 
We give a concrete example.

\begin{example}\label{exa:DOversusCiani}
We consider the Ciani quartic 
\[
Y_{r,s}:\; x^4+y^4+z^4+rz^2(x^2+y^2)+sx^2y^2=0,
\]
with $r,s\in K$ such that $\Delta(Y_{r,s})=2^{20}(s-2)^{10}(s+2)^6(-r^2+s+2)^4(r^2-4)^{12}\neq~0.$
The automorphism group of $Y$ contains the dihedral group $D_4$ of order $8$ as a subgroup, and hence contains 2 Ciani subgroups: the standard subgroup $V$ from \eqref{eq:sigma} and a second subgroup with generators
\[
V'=\left\langle \sigma_c=\begin{pmatrix}1&0&0\\0&1&0\\0&0&-1\end{pmatrix}, \qquad \sigma_d=\begin{pmatrix} 0&1&0\\1&0&0\\0&0&1\end{pmatrix}\right\rangle.
\]
Changing coordinates so that the matrix $\sigma_d$ is a diagonal matrix yields an isomorphic curve but now with $V'$ as standard subgroup:
\[
Y'_{r,s}:\; (2+s)x^4+(2+s)y^4+z^4+2rz^2(x^2+y^2)+(12-2s)x^2y^2=0.
\]
The curves $Y_{r,s}$ and $Y'_{r,s}$ have the same Dixmier--Ohno invariants, but the pairs $(Y_{r,s}, V)$ and $(Y'_{r,s}, V')$ do not have the same (projective) Ciani invariants. 
\end{example}

We introduce the following invariant that will be useful later on:
\begin{equation}\label{eq:Idef}
\begin{split}
I&=AB(a^2-4BC)(b^2-4AC)+BC(b^2-4AC)(c^2-4AB)\\&+CA(c^2-4AB)(a^2-4BC).
\end{split}
\end{equation}
The invariant $I$ can be expressed in terms of the other invariants via the relation
\begin{equation}\label{eq:Irel}
4I+ I_6 - {I'_3}^2 + 16I_3I''_3 + 2I'_3I''_3 - {I''_3}^2 = 0.
 \end{equation}

Let $(Y,V)$ be a Ciani quartic that is not necessarily in standard form. The following lemma allows to compute the Ciani invariants of $(Y, V)$. 

\begin{lemma}\label{trans} Let $F(x,y,z)=0$ be a plane quartic and $J$ a degree-$k$ invariant. Let $\lambda\in \overline{K}^{*}$ and $M\in \GL_3(\bar{K})$. Then
\begin{itemize}
\item[i)] $J(\lambda F)=\lambda^kJ(F)$,
\item[ii)] $J(\,^MF)=(\det(M))^{4k/3}\cdot J(F)$.
\end{itemize}
In particular, $\Delta(\lambda F)=\lambda^{27}\Delta(F)$ and $\Delta(\,^MF)=(\det(M))^{36}\Delta(F)$.
\end{lemma}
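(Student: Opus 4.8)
The statement to prove is Lemma \ref{trans}, which records how a degree-$k$ invariant $J$ of a plane quartic scales under (i) multiplying the defining form $F$ by a scalar $\lambda$, and (ii) acting on $F$ by a matrix $M \in \GL_3$. The plan is to reduce everything to the defining property of $J$ as an \emph{invariant}, i.e.\ as a function on the space of ternary quartic forms that is equivariant for the $\SL_3$-action with a prescribed weight. First I would recall precisely what ``degree-$k$ invariant'' means in this context: $J$ is a homogeneous polynomial of degree $k$ in the $15$ coefficients of $F$, and for $M \in \SL_3(\overline{K})$ one has $J({}^M F) = J(F)$. Part (i) is then immediate: replacing $F$ by $\lambda F$ multiplies each of the $15$ coefficients by $\lambda$, so a degree-$k$ homogeneous polynomial in those coefficients gets multiplied by $\lambda^k$, giving $J(\lambda F) = \lambda^k J(F)$.

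For part (ii), the key point is to extend the $\SL_3$-invariance to a $\GL_3$-equivariance statement by a determinant-splitting argument. Given $M \in \GL_3(\overline{K})$, write $\delta = \det(M)$ and choose a cube root $\mu$ of $\delta$ in $\overline{K}$ (possible since $\overline{K}$ is algebraically closed), so that $M = \mu \cdot (M/\mu)$ with $M/\mu \in \SL_3(\overline{K})$. Acting by the scalar matrix $\mu \Id$ on $F$ sends $F(x,y,z)$ to $F(\mu x, \mu y, \mu z) = \mu^4 F(x,y,z)$ because $F$ is homogeneous of degree $4$; hence by part (i), $J({}^{\mu\Id}F) = \mu^{4k} J(F)$. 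Combining this with $\SL_3$-invariance applied to $M/\mu$, and using that ${}^M F = {}^{\mu\Id}\big({}^{M/\mu}F\big)$ up to the order of composition (which must be checked against the paper's convention for the superscript action, but is harmless here since scalars are central), we get
\[
J({}^M F) = \mu^{4k} J(F) = (\delta^{1/3})^{4k} J(F) = (\det M)^{4k/3} J(F).
\]
One should note that $(\det M)^{4k/3}$ is well defined even though $4k/3$ need not be an integer: the ambiguity in choosing $\mu$ is a cube root of unity $\zeta$, and $\zeta^{4k}$ depends only on $4k \bmod 3$, i.e.\ on $k \bmod 3$; but the left-hand side $J({}^M F)$ is canonically defined, so in fact the expression $(\det M)^{4k/3}$ must be interpreted as $\mu^{4k}$ for any cube root $\mu$, and all such choices give the same value. (Alternatively, for the invariants actually used in the paper, $k$ is a multiple of $3$, e.g.\ the discriminant has $k = 27$, so $4k/3$ is a genuine integer and no ambiguity arises.)

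The ``in particular'' clauses follow by specialising to $J = \Delta$, the discriminant, which is a degree-$27$ invariant of ternary quartics: part (i) gives $\Delta(\lambda F) = \lambda^{27}\Delta(F)$, and part (ii) gives $\Delta({}^M F) = (\det M)^{4\cdot 27/3}\Delta(F) = (\det M)^{36}\Delta(F)$. I expect the only real subtlety to be bookkeeping: pinning down the precise normalisation convention for ``degree-$k$ invariant'' and for the left superscript action ${}^M F$ so that the exponent $4k/3$ comes out correctly, and addressing the well-definedness of the fractional power as above. The mathematical content is otherwise just homogeneity plus the definition of invariance, so there is no genuine obstacle.
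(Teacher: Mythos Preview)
The paper states this lemma without proof; it is recorded as a standard transformation property of invariants and immediately followed by an example. Your argument is correct and is exactly the expected one: homogeneity in the coefficients gives (i), and factoring $M = \mu\cdot(M/\mu)$ with $\mu^3 = \det M$ reduces (ii) to $\SL_3$-invariance plus (i) applied to the scalar $\mu^4$. Your remark on the well-definedness of $(\det M)^{4k/3}$ is a useful clarification that the paper omits.
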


\begin{example}\label{exa:fieldD4C} The Ciani invariants of the Ciani quartic $(Y_{a,b},V)$ in Example \ref{exa:fieldD4} are:
$$
I_3=2^4a(b+2\sqrt{a})^2,\,\,I'_3=2^5a(48a-48\sqrt{a}b+2\sqrt{a}-4b^2+b),
$$
$$
I''_3=2^5a\sqrt{a}(8\sqrt{a}-4b+1),\,\,I_6=2^{14}a^2(2\sqrt{a}-b)(8\sqrt{a}+4b-1)^2.
$$

For the Ciani quartic $(Y_{a,b},V')$ we find the conjugated ones over $K(\sqrt{a})/K$. We notice here that $[\underline{I}(Y_{a,b},V)]\neq[\underline{I}(Y_{a,b},V')]$ for $(a,b)\neq(0,0)$ and $a\notin (K^*)^2$. 
\end{example}

Let $(Y,V)/\overline{K}$ be a Ciani quartic, and write $\underline{I}(Y,V)$ for its set of invariants. When the subgroup $V$ is clear from the context we may simply write $\underline{I}(Y)$. We also use the notation $Y(\underline{I})/\bar{K}$ to denote a Ciani quartic whose Ciani invariants are $[\underline{I}]$, i.e. $[\underline{I}(Y(\underline{I}))]=[\underline{I}]$.  In Proposition \ref{reconsquartic} we will explicitly see that such a Ciani quartic always exists.

\begin{proposition}\label{prop:exist}
Let $Y/\overline{K}$ be a Ciani quartic, and write $\underline{I} =\underline{I}(Y)$ for its set of invariants. Assume that $[\underline{I}]=(I_3 : I_3' : I_3'' : I_6)\in \mathbb{P}_{1,1,1,2}(K)$ has a representative over $K$. Then there exists a $K$-model $Y_0$ of $Y$. 
\end{proposition}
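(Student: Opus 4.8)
The plan is to descend a standard model of $Y$ from the extension $L/K$ provided by Lemma~\ref{lem:fod_standard} down to $K$, using the hypothesis that the Ciani invariants are $K$-rational. Recall that by Lemma~\ref{lem:fod_standard} there is a Galois extension $L/K$ with $\Gal(L/K)<S_3$ and a standard $L$-model $Y_1$ of $Y$, with $Y\otimes_{\overline K} L\simeq_L Y_1$. The Galois group $\Gamma=\Gal(L/K)$ permutes the three pairs of branch points of $\varphi:Y\to Y/V$, equivalently it permutes the three coordinate lines $\ell_a,\ell_b,\ell_c$ on the conic $X_1$, equivalently it permutes the coordinates $u=x^2,v=y^2,w=z^2$ of $\mathbb P^2_L$. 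So $\Gamma$ acts on the ambient $\mathbb P^2$ through the subgroup of $\PGL_3$ consisting of the permutation matrices indexed by $\Gamma<S_3$, combined with the natural (conjugate-linear) Galois action on coefficients.

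The key step is to produce a $1$-cocycle for $\Gamma$ with values in $\Aut_{\overline K}(Y_1)$, or rather in $\PGL_3(L)$ compatible with the Ciani structure, and show it is a coboundary. Concretely: for each $\tau\in\Gamma$, the conjugate curve $\,^\tau Y_1$ is again a standard model of $Y$ (since the invariants $\underline I$ are $\tau$-fixed by hypothesis, so $[\underline I(\,^\tau Y_1)]=[\,^\tau\underline I]=[\underline I]=[\underline I(Y_1)]$, and two Ciani quartics with the same projective Ciani invariants are $\overline K$-isomorphic as Ciani quartics). Thus there is an isomorphism $g_\tau: Y_1\to \,^\tau Y_1$, which we may take to normalise $V$; composing with the permutation matrix $P_\tau$ coming from the action of $\tau$ on $\{\ell_a,\ell_b,\ell_c\}$, we get that $c_\tau:=P_\tau^{-1}g_\tau$ lies in the group of diagonal matrices modulo scalars preserving the standard form, i.e. in $V$ itself (here one uses that $\Aut$ of the ambient preserving the standard Ciani structure and acting trivially on the three coordinate lines is exactly $V\cong C_2^2$). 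This gives a cocycle $\tau\mapsto c_\tau\in H^1(\Gamma, V)$ with the twisted action coming from the permutation. Since $|\Gamma|$ divides $6$ and $V\cong C_2^2$, one analyses $H^1$ of $S_3$ (or its subgroups) acting on $C_2^2$ by permuting the three nontrivial elements; by restriction to a Sylow $2$-subgroup and inflation-restriction, or by a direct computation, this cohomology group vanishes, or — more robustly — one uses that $Y$ is already defined over $K$ as a curve (this is part of the data: $Y$ is a $\overline K$-Ciani-quartic whose invariants descend), so the obstruction in $H^2$ vanishes and the relevant Galois descent datum on $Y_1$ is effective; Weil's descent criterion then yields a $K$-model $Y_0$.

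More concretely and avoiding cohomology: since $[\underline I]\in\mathbb P_{1,1,1,2}(K)$, fix a representative $\underline I=(I_3,I_3',I_3'',I_6)\in K^4$. In Proposition~\ref{prop:reconsK} (the reconstruction over $K$ alluded to in the Notation table, $Y_0(\underline I)/K$) one produces, directly from a $K$-rational tuple of invariants, a smooth quartic over $K$ with those invariants — this is exactly the reconstruction algorithm that Section~\ref{sec:reconst} is devoted to, and it is referenced as ``Proposition~\ref{reconsquartic}'' in the sentence just before the statement. Granting that construction, the resulting $Y_0(\underline I)/K$ is a Ciani quartic over $K$ with $[\underline I(Y_0(\underline I))]=[\underline I]=[\underline I(Y)]$, hence $Y_0(\underline I)\otimes_K\overline K\simeq_{\overline K} Y$ as Ciani quartics, so it is the desired $K$-model $Y_0$.

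\textbf{Main obstacle.} The delicate point is the descent of the \emph{model} (not just the invariants): a priori there is a cohomological obstruction, and the fact that the field of moduli of a plane quartic need not be a field of definition (as the introduction stresses, and as Example~\ref{exa:fieldD4} illustrates for the pair $(Y,V)$) means one must use more than just $K$-rationality of the invariants. The honest resolution is either (i) to invoke the explicit reconstruction of Section~\ref{sec:reconst} which builds a model over $K$ by hand — in which case Proposition~\ref{prop:exist} is essentially a corollary of Proposition~\ref{reconsquartic} — or (ii) to check directly that $H^1(\Gal(L/K),V)=0$ for every subgroup of $S_3$ acting on $V\cong C_2^2$ by permuting its three involutions, which makes the Weil descent datum on $Y_1$ effective. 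I expect the paper takes route (i); the only thing to verify carefully there is that the reconstructed curve genuinely has $V$ acting as the standard Klein four-group over $K$ and the stated invariants, which is precisely the content of the reconstruction section and requires the $K$-rationality of the invariant tuple to clear denominators and choose square roots consistently among $u,v,w$.
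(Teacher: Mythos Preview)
The paper's own proof is a one-line citation to \cite[Prop.~2.1]{elg-twists} and \cite[Thm.~3.3]{LRRS}, i.e.\ to the general fact that a smooth plane quartic with $|\Aut_{\overline K}(Y)|>2$ is defined over its field of moduli. So neither of your two routes is literally what the paper does, but your route~(ii) via the explicit reconstruction of Section~\ref{sec:reconst} is correct and is exactly the self-contained argument the paper supplies later: Proposition~\ref{reconsquartic} builds $Y_1(\underline I)$ over the splitting field $L$ of $\mathcal P$, and Proposition~\ref{prop:reconsK} descends it to $K$. Since Ciani quartics with the same projective invariants are $\overline K$-isomorphic, this $Y_0(\underline I)/K$ is the required model. (Your final worry about $V$ acting over $K$ is unnecessary here: the proposition only asks for a $K$-model of $Y$ as a curve, and Remark~\ref{rem:actiononV} shows $V$ need not be $K$-rational in the special case.)

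Your route~(i) has two genuine problems. First, you invoke Lemma~\ref{lem:fod_standard} at the outset, but that lemma \emph{assumes} $(Y_0,V)$ is already defined over $K$; you cannot use it starting from $Y/\overline K$. The correct replacement is to take the standard model over the splitting field $L$ of $\mathcal P$, which only needs $\underline I\in K^4$ --- and that is precisely Proposition~\ref{reconsquartic}. Second, the descent obstruction for \emph{existence} of a $K$-model lives in $H^2$, not $H^1$; your $H^1(\Gamma,V)$ computation would classify twists once a model exists, not produce one. In fact no cohomology is needed: when $\tau\in\Gal(L/K)$ permutes the roots $\cA,\cB,\cC$, the corresponding permutation matrix already gives an isomorphism $Y_1\to{}^\tau Y_1$, and these visibly satisfy the Weil cocycle condition. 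Diagonalising this permutation action is exactly what the Vandermonde-type matrices in Table~\ref{tab:reconsK} do, so route~(i), once repaired, collapses into route~(ii). Finally, your aside that ``$Y$ is already defined over $K$ as a curve (this is part of the data)'' is circular: that is the conclusion, not the hypothesis.
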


\begin{proof}
The statement follows from \cite[Prop. 2.1]{elg-twists}. See also \cite[Thm. 3.3]{LRRS}.
\end{proof}

\begin{remark} \label{rem:actiononV}In the setting of  Proposition \ref{prop:exist}, we have that  $(Y_0,V)\simeq_{\bar{K}}(Y_0,\,^\sigma V)$ for all $\sigma\in \Gal(\bar{K}/K)$. In particular, for each $\sigma\in \Gal(\bar{K}/K)$ there exists $M_\sigma\in\Aut_{\overline{K}}(Y_0)\simeq \PGL_3(\overline{K})$ such that $\,^\sigma V=M_\sigma^{-1}VM_\sigma$. If $Y_0$ is non-special, the subgroup $V\subset \Aut_{\overline{K}}(Y_0)$ is automatically $K$-rational. This is not always the case in the special case.  For instance, consider
\[
Y/\mathbb{Q}:\; x^4+y^4+xz^3=0
\]
and the Ciani subgroup $V$ generated by the automorphisms 
$$
\begin{pmatrix} 1&0&0\\0&-1&0\\0&0&1\end{pmatrix}, \qquad \begin{pmatrix} 1/\sqrt{3} & 0 & \zeta^2_3/\sqrt{3}\\
0 & 1 & 0\\
2\zeta_3/\sqrt{3} & 0 & -1/\sqrt{3}
\end{pmatrix},
$$
where $\zeta_3=e^{2\pi i/3}.$
The Ciani invariants of $(Y, V)$ are $(1:-24:-16:-256)\in\mathbb{P}^3_{1,1,1,2}(\mathbb{Q})$, but $V$ is not defined over $\mathbb{Q}$.
\end{remark}

\subsection{Normalisation of invariants}\label{sec:normalise}
The invariants of a Ciani quartic $Y/K$ form a tuple $\underline{I}(Y)\in K^4$. Sometimes it is very important to distinguish this tuple from the corresponding point $[\underline{I}(Y)]$ in $\mathbb{P}^3_{1,1,1,2}(K)$. 
This motivates the following definition.

\begin{definition}\label{def:normalise} 
Let $K$ be  a local field with valuation $\nu$, as in Section \ref{sec:intro}. A point $\underline{I}=(I_3,I'_3,I''_3,I_6)\in K^4$ 
is said to be \emph{normalised at $\nu$} if all coordinates have positive valuation and at least one of them has valuation equal to zero.
 A  \emph{normalisation}  $\underline{I}_\nu\in (K')^4$ of $\underline{I}$ is a normalised vector defined over an extension field $K'/K$ that defines the same point in the weighted projective space $\mathbb{P}^3_{1,1,1,2}(K').$ 
\end{definition}

\begin{remark} 
Let $\underline{I}=(I_3, I_3', I_3'', I_6)\in K^4$ be a set of Ciani invariants. It can be normalised over a quadratic extension $K'/K$. Namely, define $m:=\min(\nu(I_3), \nu(I'_3), \nu(I''_3),\nu(I_6)/2)$ and let $u\in K'$ be an element with $\nu(u)=m.$ Then   $\underline{I}_\nu=(I_3/u, I_3'/u, I_3''/u, I_6/u^2)\in {K'}^4$ is normalised. 
Notice that $[\underline{I}]=[\underline{I}_\nu]\in\mathbb{P}^3_{1,1,1,2}(K)$, as required in Definition \ref{def:normalise}. 
\end{remark}

\begin{remark}\label{normI6}
If $\nu(I_6)$ is even or $m\neq \nu(I_6)/2$, then we can normalise $\underline{I}$ at $\nu$ over $K$, so $\underline{I}_\nu\in K^4$. In the case that we cannot normalise at $\nu$ over $K$, we have that after normalisation $\nu(I_3),\nu(I'_3),\nu(I''_3)>0$ and $\nu(I_6)=0$. As discussed later, see Lemmas \ref{lem:badprimes} and \ref{lem:potgoodhyper}, it follows that in this situation, the curve $Y/\overline{K}$ with invariants $\underline{I}$ has bad reduction at $\nu$.

\end{remark}

\begin{remark} Given a point $[\underline{ I}]\in\mathbb{P}_{1,1,1,2}^3(K)$ there exists a (not necessarily normalised) point $\underline{I}'\in K^4$ such that $[\underline{I}]=[\underline{I}']$. For example, consider $[(\sqrt{\pi},\sqrt{\pi},\sqrt{\pi},1)]=[(\pi,\pi,\pi,\pi)]$ for an element $\pi\in K$ with $\nu(\pi)=1$. 
\end{remark}

\section{Models of Ciani quartics}\label{sec:reconst}
In this section, given a set of Ciani invariants $\underline{I}\in K^4$, we construct a standard model $Y_1(\underline{I})$ over an explicit finite extension $L'/K$ such that $[\underline{I}]=[\underline{I}(Y_1(\underline{I}))]$. We also construct a $K$-model of this curve that we call $Y_0(\underline{I})$. We finally characterise when $Y(\underline{I})$ is special in terms of the invariants and we study the twists of $Y_0(\underline{I})/K$.

\subsection{Reconstruction of a Ciani quartic with given invariants}\label{sec:recons}

 In this section and in the following, we exceptionally assume that $K$ is an arbitrary (even global) field of characteristic different from $2$. We can change the assumptions on $K$ as the result works in this generality. Let $\underline{I} = (I_3 , I_3', I_3'', I_6)\in K^4$ be a fixed set of invariants with $\Delta(\underline{I})=2^{20}I_3I_3''^4I_6^2 \neq 0$.

 The goal of this subsection is to construct a Ciani quartic $Y_1(\underline{I})$ given by a standard model over an explicit field extension $L'/K$ such that $[\underline{I}]=[\underline{I}(Y_1(\underline{I}))]$. We also provide a $K$-model $Y_0(\underline{I})$ of this quartic.

We start by introducing some notation, which is used in the next proposition.
Set \(P = 8I_3 + I'_3 - I''_3\), consider the polynomial $\mathcal{P}(T) = T^3 - S_1T^2 + S_2T - S_3$ with
		\begin{equation}\label{eq:coeffP}
  \begin{split}
		S_1= &\,I'_3+12I_3,\\
		S_2= &\,\frac{1}{4}(P^2 + 16I_3(P + I_3'') - I_6), \\
		S_3= &\,I_3P^2,
  \end{split}
		\end{equation}
	and denote the roots of \(\mathcal{P}\) by \(\mathcal{A}, \mathcal{B},\mathcal{C}\).

	\begin{proposition}\label{reconsquartic} Let \(\underline{I} = (I_3, I_3', I_3'', I_6)\in K^4\) be a set of invariants. Assume \(\Delta(\underline{I}) = 2^{20}I_3I_3''^4I_6^2 \neq 0\). 
	Let $L/K$ be the splitting field of the polynomial  $\mathcal{P}$.
	\begin{enumerate}[(a)]
	\item Assume  \(P \neq 0\). The Ciani quartic defined by the standard model
		\begin{equation}\label{eq:modelPnot0} Y_1(\underline{I}):\,\mathcal{A} x^4 + \mathcal{B} y^4 + \mathcal{C} z^4 + P(x^2y^2 + y^2z^2 + z^2x^2) = 0\end{equation}
		has invariants $\underline{I}(Y_1(\underline{I}))=( P^2I_3, P^2I'_3,P^2I''_3, P^4I_6)$. The discriminant is $\Delta(Y_1(\underline{I}))=\Delta(\underline{I}) P^{18}$. The quartic $Y_1(\underline{I})$ is defined over $L'=L=K(\cA,\cB,\cC)$.
	\item Assume that $P=0$ and that $0$ is a simple root of $\mathcal{P}$, hence $S_2\neq 0$.  It is no restriction to  assume that $\cA=0$ and $\cB, \cC\neq 0$.
  The Ciani quartic defined by the standard model
	\begin{equation}\label{eq:modelcaseb}    
     Y_1(\underline{I}):\,I_3 S_2 x^4 + \mathcal{B} y^4 + \mathcal{C} z^4 + S_2 x^2(y^2 + z^2) = 0
     \end{equation}
    has invariants $\underline{I}(Y_1(\underline{I}))=( S_2^2I_3, S_2^2I'_3,S_2^2I''_3, S_2^4I_6)$. The discriminant of this model is $\Delta(Y_1(\underline{I}))=\Delta(\underline{I}) S_2^{18}$ and it is defined over $L'=L=K(\cB,\cC)$.
	\item Assume that $P=0$ and  $0$ is at least a double root of $\mathcal{P}$. It is no restriction  to assume that $\cA=\cB=0$. Choose $r^2=I_3S_1$. The Ciani quartic defined by the standard model
\begin{equation}\label{eq:modelcasec}	
 Y_1(\underline{I}):\,I_3(x^4+y^4+z^4)+rx^2y^2=0
 \end{equation}
	 has invariants $\underline{I}(Y_1(\underline{I}))=( I_3^3, I_3^2I'_3,I_3^2I''_3, I_3^4I_6)$, the discriminant is $\Delta(Y_1(\underline{I}))=\Delta(\underline{I}) I_3^{18}$, and it is defined over the extension $L'=L(r)/K$. 
		\end{enumerate}
	\end{proposition}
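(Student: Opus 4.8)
The plan is to verify the three cases by direct computation using the invariant formulas \eqref{eq:inv}, the discriminant formula \eqref{eq:disc}, and the scaling relation of Lemma~\ref{trans}. The underlying strategy in all three cases is the same: the polynomial $\mathcal{P}$ is engineered so that its roots $\cA,\cB,\cC$ (with the chosen value of the quartic's ``mixed'' coefficient) produce a standard model whose invariants are the prescribed $\underline{I}$ up to a common scaling factor — $P^2$ in case (a), $S_2^2$ in case (b), $I_3^2$ in case (c) — which then determines the field of definition once we know the scaling factor lies in $K$.

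First I would treat case (a), which is the generic one. Here the standard model \eqref{eq:modelPnot0} has coefficients $A=\cA$, $B=\cB$, $C=\cC$, $a=b=c=P$. I would substitute these into \eqref{eq:inv}. By construction $\cA+\cB+\cC = S_1 = I_3'+12I_3$, $\cA\cB+\cB\cC+\cC\cA = S_2$, and $\cA\cB\cC = S_3 = I_3 P^2$, so $I_3(Y_1) = \cA\cB\cC = I_3 P^2$ is immediate. For $I_3'(Y_1)$ and $I_3''(Y_1)$ one expands in terms of the elementary symmetric functions of $\cA,\cB,\cC$ and the parameter $P$; using $a=b=c=P$ collapses the sums, and after substituting the values of $S_1,S_2,S_3$ from \eqref{eq:coeffP} the identities $I_3'(Y_1)=P^2I_3'$ and $I_3''(Y_1)=P^2I_3''$ should fall out. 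The $I_6$ computation is the heaviest: $I_6(Y_1)=(P^2-4\cB\cC)(P^2-4\cA\cC)(P^2-4\cA\cB)$, which is a symmetric polynomial in the roots, hence re-expressible via $S_1,S_2,S_3$ and $P$; matching it to $P^4 I_6$ requires using the definition of $S_2$, where the $-I_6$ term enters precisely to make this work. The discriminant claim is then automatic from \eqref{eq:disc}: $\Delta(Y_1) = 2^{20}(P^2I_3)(P^2I_3'')^4(P^4I_6)^2 = P^{18}\Delta(\underline{I})$. Finally, since the coefficients of \eqref{eq:modelPnot0} lie in $K(\cA,\cB,\cC)=L$, the model is defined over $L'=L$.

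Cases (b) and (c) are degenerations handled the same way. In (b), with $\cA=0$ one root of $\mathcal{P}$ is $0$, so $S_3=I_3P^2=0$ forces $P=0$ (consistent with the hypothesis), and $S_2=\cB\cC\neq 0$, $S_1=\cB+\cC$; substituting $A=I_3S_2$, $B=\cB$, $C=\cC$, $a=S_2$, $b=c=0$ into \eqref{eq:inv} and using these relations should give the scaling by $S_2^2$ (and $S_2^4$ for $I_6$). In (c), two roots vanish so $S_2=S_3=0$ and $S_1=\cC$; with $r^2=I_3S_1=I_3\cC$ the model $I_3(x^4+y^4+z^4)+rx^2y^2$ has $A=B=C=I_3$, $c=r$, $a=b=0$, and one checks $I_3(Y_1)=I_3^3$, etc., directly. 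I expect the main obstacle to be the $I_6$ identity in case (a): it is a degree-$6$ symmetric expression in three roots whose reduction to $S_1,S_2,S_3$ and verification against $P^4I_6$ is where the specific shape of $S_2$ in \eqref{eq:coeffP} is essential, and it is easy to lose a sign or a factor of $4$. A secondary point requiring care is the phrase ``it is no restriction to assume $\cA=0$'' (resp. $\cA=\cB=0$): since the Ciani invariants are symmetric under permuting $(A,a)\leftrightarrow(B,b)\leftrightarrow(C,c)$, relabeling the roots does not change the invariants, so one may indeed choose which root plays which role — this should be stated explicitly but needs no real argument.
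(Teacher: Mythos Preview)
Your proposal is correct and matches the paper's approach exactly: the paper's own proof is a single sentence (``It is straightforward to check that the given models have the given invariants and that they are defined over the given fields''), and you have simply spelled out that computation. One small slip to fix in case~(b): reading off the standard-model coefficients from \eqref{eq:modelcaseb} gives $a=0$ (no $y^2z^2$ term) and $b=c=S_2$, not $a=S_2$, $b=c=0$ as you wrote; by the permutation symmetry you already noted this does not affect the invariant calculation, but you should correct the labeling before carrying out the verification.
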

	
	\begin{proof} It is straightforward to check that the given models have the given invariants and that they are defined over the given fields.
		\end{proof}

\begin{notation}\label{not:LI}
    Let $\underline{I}\in K^4$ be a set of invariants. We write $L(\underline{I})$ for the field $L$ from Proposition \ref{reconsquartic}, i.e. for the splitting field of the polynomial $\mathcal{P}$. It only depends on $[\underline{I}]\in\mathbb{P}^3_{1,1,1,2}(K)$. In addition, in cases (a) and (b) we have $L'=L$. 
\end{notation}
		
	\begin{remark}\label{rmk:roots}
 Assume that $Y$ is a Ciani quartic given by an arbitrary standard model \eqref{eq:standard_model}. Take $\underline{I}=\underline{I}(Y)$.
 \begin{enumerate}[(a)]
\item 
 If $P=8I_3+I'_3-I''_3=abc\neq 0$, then $Y_1(\underline{I})$ is given by \eqref{eq:modelPnot0}, and we may take
 \[
\cA=Aa^2, \quad \cB=Bb^2, \quad \cC=Cc^2.
  \]
  \item If $P=0 $, the polynomial $\mathcal{P}$ has $0$ as a root.  Assume that  $0$  is a simple root of  $\mathcal{P}$. In this case $Y_1(\underline{I})$ is given by \eqref{eq:modelcaseb}, where we  take
  \[
  \cB=Bb^2, \qquad \cC=Cc^2.
  \]
     \item Assume that $P=0$ and that $0$ is a root of $\mathcal{P}$ of multiplicity at least $2$. In this case $Y_1(\underline{I})$ is  given by \eqref{eq:modelcasec} with 
  \[
  r^2=ABC^2c^2.
  \]
  Note that in this case  $Y_1(\underline{I})$ is defined over the extension $L(r)/L$ of degree at most $2$.
  \end{enumerate}
	\end{remark}

The Ciani quartics $Y_1(\underline{I})$ can be descended to $K$. The next result provides a particular $K$-model that we call $Y_0(\underline{I})$. Later on, in Lemma \ref{lem:twists}, we will give all the $K$-models of $Y_1(\underline{I})$. In order to do that we will compute the twists of $Y_0(\underline{I})/K$.

\begin{proposition}\label{prop:reconsK}
With the notation in Proposition \ref{reconsquartic} and the data in Table \ref{tab:reconsK}, the morphism $\phi:\, Y_0(\underline{I})\rightarrow Y_1(\underline{I})$ defines a $K$-model $Y_0(\underline{I})$ with $\underline{I}(Y_0(\underline{I}))=(\lambda I_3(Y_1(\underline{I})),\lambda I'_3(Y_1(\underline{I})),\lambda I''_3(Y_1(\underline{I})),\lambda^2I_6(Y_1(\underline{I})))$.
\begin{table}[] \centering
\begin{tabular}{|c|ccc|}
\hline
          & \multicolumn{3}{c|}{$(a)$}                            \\ \hline
{$[L:K]$} & \multicolumn{1}{c|}{1} & \multicolumn{1}{c|}{2} & 3    \\ \hline
$\phi$    & \multicolumn{1}{c|}{$\begin{pmatrix}1 & 0 & 0 \\ 0 & 1 & 0 \\ 0 & 0 & 1\end{pmatrix}$}  & \multicolumn{1}{c|}{$\begin{pmatrix}1 & 0 & 0 \\ 0 & 1 & \mathcal{B}  \\ 0 & 1 & \mathcal{C} \end{pmatrix}$}  &  $\begin{pmatrix}1 & \mathcal{A} & \mathcal{A}^2 \\ 1 & \mathcal{B} & \mathcal{B}^2 \\ 1 & \mathcal{C} & \mathcal{C}^2\end{pmatrix}$  \\ \hline
$\lambda$ & \multicolumn{1}{c|}{1}  & \multicolumn{1}{c|}{$(\mathcal{B}-\mathcal{C})^4$}  & $Q^4$  \\ \hline
\end{tabular}
\end{table}

\begin{table}[] \centering
\begin{tabular}{|c|cc|c|}
\hline
      &    \multicolumn{2}{c|}{$(b)$}   & $(c)$ \\ \hline
{$[L:K]$} & \multicolumn{1}{c|}{1} & 2 & 1   \\ \hline
$\phi$    & \multicolumn{1}{c|}{$\begin{pmatrix}1 & 0 & 0 \\ 0 & 1 & 0 \\ 0 & 0 & 1\end{pmatrix}$}  & $\begin{pmatrix}1 & 0 & 0 \\ 0 & 1 & \mathcal{B}  \\ 0 & 1 & \mathcal{C} \end{pmatrix}$ &   $\begin{pmatrix}\sqrt{r} & 0 & 0 \\ 0 & 1 & 0  \\ 0 & 0 & 1 \end{pmatrix}$  \\ \hline
$\lambda$ &\multicolumn{1}{c|}{1}  & $(\mathcal{B}-\mathcal{C})^4$  &   ${I_3S_1}$  \\ \hline
\end{tabular}
\caption{Explicit isomorphisms for $K$-models of Ciani quartics.}
\label{tab:reconsK}
\end{table}
\end{proposition}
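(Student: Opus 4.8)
The plan is to verify the statement by a direct computation, carried out separately in the three cases (a), (b), (c) of Proposition~\ref{reconsquartic} and in the subcases recorded in Table~\ref{tab:reconsK}. The common principle is the following: in the model $Y_1(\underline I)$ the only part of the defining equation that is not already $\Gamma_K=\Gal(\bar K/K)$-invariant is the diagonal part $\mathcal A x^4+\mathcal B y^4+\mathcal C z^4$, whose coefficients are the roots $\mathcal A,\mathcal B,\mathcal C$ of the polynomial $\mathcal P\in K[T]$ and are therefore permuted by $\Gamma_K$ according to its action on the roots of $\mathcal P$; the remaining terms $P(x^2y^2+y^2z^2+z^2x^2)$ (respectively $S_2 x^2(y^2+z^2)$, respectively $r\,x^2y^2$, with $r$ the only non-$K$-rational quantity in case (c)) are symmetric in $x,y,z$, hence invariant under those coordinate permutations. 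Consequently $Y_1(\underline I)\cong{}^{\sigma}Y_1(\underline I)$ via an explicit permutation matrix for each $\sigma\in\Gamma_K$, these matrices form a coboundary, and the matrix $\phi$ of Table~\ref{tab:reconsK} is a trivialisation of the corresponding descent datum. This reduces the proof to exhibiting $\phi$ and checking that it is invertible and that the pulled-back form has coefficients in $K$.

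For invertibility, in case (a) with $[L:K]=3$ the matrix $\phi$ is the Vandermonde matrix of $\mathcal A,\mathcal B,\mathcal C$, so $\det\phi=(\mathcal B-\mathcal A)(\mathcal C-\mathcal A)(\mathcal C-\mathcal B)\neq 0$ because a repeated root of $\mathcal P$ would place us in case (b) or (c); in the $[L:K]=2$ subcases of (a) and (b) one has $\det\phi=\mathcal C-\mathcal B\neq 0$; and in case (c) $\det\phi=\sqrt r\neq 0$ once one notes that $\Delta(\underline I)\neq 0$ together with the case hypotheses forces $r\neq 0$; the case $[L:K]=1$ is trivial. For the rationality, set $F_1$ for the defining form of $Y_1(\underline I)$ and $F_0:={}^{\phi}F_1=F_1\circ\phi$; expanding $F_0$ one reads off that each of its coefficients is a symmetric polynomial in $\mathcal A,\mathcal B,\mathcal C$ with coefficients in $K$, hence lies in $K$ by the fundamental theorem of symmetric functions (concretely, is a polynomial in $S_1,S_2,S_3$). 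For instance, in case (a) with $[L:K]=3$, writing $\ell_i=u+\mathcal A_i v+\mathcal A_i^2 w$ for the three conjugate linear forms, one gets $F_0=\sum_i \mathcal A_i\ell_i^4+P\cdot\bigl(\ell_1^2\ell_2^2+\ell_2^2\ell_3^2+\ell_3^2\ell_1^2\bigr)$; the first summand contributes to each monomial $u^a v^b w^c$ a constant multiple of the power sum $\sum_i\mathcal A_i^{\,1+b+2c}$, and the second summand is visibly symmetric in the $\mathcal A_i$. The $[L:K]=2$ subcases are the analogous, shorter computation with two conjugate roots, and in case (c) the substitution $x\mapsto\sqrt r\,x$ turns $I_3(x^4+y^4+z^4)+r\,x^2y^2$ into $I_3 r^2 x^4+I_3 y^4+I_3 z^4+r^2 x^2 y^2$, all of whose coefficients lie in $K$ since $r^2=I_3 S_1\in K$. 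This shows that $Y_0(\underline I):=\{F_0=0\}$ is a $K$-model of $Y(\underline I)$, smooth because $\phi\in\GL_3$.

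It remains to compute the invariants of $Y_0(\underline I)$. By Lemma~\ref{trans}(ii), a degree-$k$ invariant $J$ satisfies $J(F_0)=J({}^{\phi}F_1)=(\det\phi)^{4k/3}J(F_1)$, so the weight-one invariants $I_3,I_3',I_3''$ of $Y_1(\underline I)$ are multiplied by $\lambda:=(\det\phi)^4$ and the weight-two invariant $I_6$ by $(\det\phi)^8=\lambda^2$, which is exactly the shape asserted. One then checks in each case that $(\det\phi)^4$ equals the entry of Table~\ref{tab:reconsK}: it is $(\mathcal B-\mathcal C)^4$ in the two $[L:K]=2$ subcases, $I_3 S_1=r^2=(\sqrt r)^4$ in case (c), and $Q^4$ in case (a) with $[L:K]=3$, using the relation between $Q$ and the discriminant of $\mathcal P$ from Section~\ref{sec:recons} (so that $(\det\phi)^2$, the discriminant of $\mathcal P$, equals $Q^2$). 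I expect the main difficulty here to be organisational rather than conceptual: one must make sure at each step that the form obtained is literally defined over $K$, not merely over $K$ up to a non-$K$-rational scalar (which the symmetric-function argument guarantees), and one must separately handle the degenerate configurations concealed in the hypotheses — most notably the Fermat quartic, for which $0$ is a triple root of $\mathcal P$ and $r=0$, so that $Y_1(\underline I)$ is already defined over $K$ and the change of coordinates in the table must be taken to be the identity.
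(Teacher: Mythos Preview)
Your approach coincides with the paper's (which merely says ``it is straightforward to check\dots\ a consequence of Lemma~\ref{trans}''): direct verification that $F_0={}^{\phi}F_1$ has $K$-rational coefficients via symmetric functions in the roots of $\mathcal P$, together with Lemma~\ref{trans}(ii) to read off $\lambda=(\det\phi)^4$. The strategy and most of the case checks are correct.

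There is, however, an arithmetic slip in case~(a) with $[L:K]=3$. Since $\phi$ is the Vandermonde matrix of $\mathcal A,\mathcal B,\mathcal C$, one has $(\det\phi)^2=\prod_{i<j}(\mathcal A_i-\mathcal A_j)^2$, which \emph{is} the discriminant of $\mathcal P$; and the paper defines $Q$ to be precisely this discriminant (the sentence preceding~\eqref{eq:Qdef}). Hence $(\det\phi)^2=Q$ and $(\det\phi)^4=Q^2$, not $Q^4$. Your parenthetical ``$(\det\phi)^2$, the discriminant of $\mathcal P$, equals $Q^2$'' contradicts the paper's convention. So either the table entry $Q^4$ is a typo for $Q^2$, or there is an additional scalar normalisation of $F_0$ that neither you nor the paper's one-line proof makes explicit; in any case the justification you give does not actually verify the printed entry.
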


\begin{proof} It is straightforward to check that the curves $Y_0(\underline{I})$ defined by the isomorphisms $\phi$ are defined over $K$. The equality relating the invariants of both models is a consequence of Lemma \ref{trans}.
\end{proof}

\subsection{Characterisation of special Ciani quartics}\label{sec:special}

As in the previous section, we again assume that $K$ is any (possibly global) field of characteristic $\neq 2$. Fix a set of Ciani invariants $\underline{I}\in K^4$. In this section, we characterise special Ciani quartics in terms of their Ciani invariants. We refer to \cite{LRRS} for a similar result characterising the automorphism group of an arbitrary smooth quartic in terms of its Dixmier--Ohno invariants.

We write $Q$ for the discriminant of the polynomial $\mathcal{P}$ introduced in Section \ref{sec:recons}. A straightforward calculation shows that 
\begin{equation}\label{eq:Qdef}
Q=-4I_3I_3'^3I_6 - 27I_3^2I_6^2 + 18I_3I_3'I_6I + I_3'^2I^2 - 4I^3.
\end{equation}

\begin{lemma}\label{lem:Q}  The Ciani quartic $(Y(\underline{I}), V)$ with invariants $\underline{I}$ is special if and only if $Q=0$. 
\end{lemma}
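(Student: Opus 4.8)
The plan is to connect the three geometric/algebraic objects at play: the existence of a second Ciani subgroup, a symmetry of the reconstructed model, and the vanishing of the discriminant $Q$ of the cubic $\mathcal{P}$. First I would reduce to the reconstructed standard model of Proposition \ref{reconsquartic}: since being special depends only on the $\overline{K}$-isomorphism class, hence only on $[\underline{I}]\in\mathbb{P}^3_{1,1,1,2}$, it suffices to decide whether $Y_1(\underline{I})$ is special. I would treat case (a), where $P\neq0$ and the model is $\mathcal{A}x^4+\mathcal{B}y^4+\mathcal{C}z^4+P(x^2y^2+y^2z^2+z^2x^2)=0$, as the main case; cases (b) and (c) are degenerate and should be handled by the same method (or reduced to case (a) by noting that in those cases $Q$ automatically vanishes, as $0$ is a repeated or isolated-but-special root, and the curve visibly has extra symmetry — e.g. \eqref{eq:modelcasec} is the Fermat-type quartic with large automorphism group).

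The key observation in case (a) is that the model \eqref{eq:modelPnot0} is symmetric in the pairwise mixed terms, so any permutation of the three coordinate axes that also permutes $(\mathcal{A},\mathcal{B},\mathcal{C})$ accordingly is an automorphism; more precisely, a coordinate permutation is an automorphism of $Y_1(\underline{I})$ exactly when it fixes the multiset $\{\mathcal{A},\mathcal{B},\mathcal{C}\}$ pointwise, i.e. when two of the $\mathcal{A},\mathcal{B},\mathcal{C}$ coincide. By the classification recalled in Figure \ref{fig:stratification} and the cited results \cite{LRRS}, a Ciani quartic is non-special iff $\Aut_{\overline{K}}(Y)\simeq C_2^2$, and the next-largest stratum is $D_4$; the extra order-$2$ element promoting $V\simeq C_2^2$ to $D_4$ is precisely such a transposition of axes (composed with a suitable scaling), so it exists iff the cubic $\mathcal{P}$ has a repeated root. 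Since $Q=\disc(\mathcal{P})$ by definition \eqref{eq:Qdef}, this gives: $Y_1(\underline{I})$ is special $\iff$ two of $\mathcal{A},\mathcal{B},\mathcal{C}$ agree $\iff Q=0$. I would need to rule out the spurious possibility that an axis-transposition of \eqref{eq:modelPnot0} is an automorphism without two roots coinciding — but this is immediate since such a map sends $\mathcal{A}x^4+\mathcal{B}y^4+\mathcal{C}z^4$ to a permutation of itself, which equals the original quartic only if the coefficients match.

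The main obstacle I anticipate is showing the converse direction cleanly, i.e. that the \emph{only} way to get extra automorphisms (beyond $V$) on a model of the shape \eqref{eq:modelPnot0} is via a coordinate permutation with two of $\mathcal{A},\mathcal{B},\mathcal{C}$ equal — a priori there could be a less obvious automorphism normalising $V$ but not diagonal and not a signed permutation. Here I would use that any automorphism normalising $V\simeq C_2^2$ must permute the three order-$2$ subgroups of $V$, hence acts on the associated eigenspace decomposition, forcing it to be (up to the diagonal torus, i.e. up to elements of $V$ and scalings) a permutation matrix on the coordinates $x,y,z$; combined with the short analysis above this pins down the extra symmetry and forces $\disc(\mathcal{P})=Q=0$. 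Conversely, if $Q=0$, exhibiting the explicit extra automorphism (the appropriate signed transposition) shows $|\Aut_{\overline{K}}|>4$, hence $Y$ is special by the classification. Finally I would check cases (b) and (c) of Proposition \ref{reconsquartic} satisfy $Q=0$ and that those models are indeed special, closing the equivalence in all cases.
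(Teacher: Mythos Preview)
Your forward direction ($Q=0\Rightarrow$ special) matches the paper: exhibit the coordinate transposition when two roots of $\mathcal{P}$ coincide. For the converse (special $\Rightarrow Q=0$) you take a different, more structural route than the paper. The paper simply invokes that every special Ciani quartic lies in the explicit two-parameter $D_4$-family $Y_{a,b}$ of Example \ref{exa:fieldD4}, and then checks by direct computation (using the invariants in Example \ref{exa:fieldD4C}) that $Q$ vanishes identically on that family. Your approach instead argues that an extra automorphism must normalise $V$, hence be a signed permutation of the coordinate lines, hence force two of $\mathcal{A},\mathcal{B},\mathcal{C}$ to coincide. This is cleaner conceptually and avoids the family computation, but you need one extra sentence you do not supply: why must there \emph{exist} an element of $\Aut_{\overline{K}}(Y)\setminus V$ normalising $V$? (It is true: by the stratification the $2$-Sylow of $\Aut_{\overline{K}}(Y)$ has order $\geq 8$, so $V$ sits properly inside a $2$-group, where normalisers grow. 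Alternatively, $V$ has index $2$ in any $D_4\subseteq\Aut_{\overline{K}}(Y)$.)

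One correction: your parenthetical claim that in case (b) of Proposition~\ref{reconsquartic} ``$Q$ automatically vanishes'' is wrong. In case (b) one has $P=0$ and $\mathcal{A}=0$ is a \emph{simple} root, so $Q=\disc(\mathcal{P})$ is nonzero whenever $\mathcal{B}\neq\mathcal{C}$, and the model \eqref{eq:modelcaseb} is then non-special. Case (b) must therefore be treated by your ``same method'' (which does work: the only possible extra signed permutation is $y\leftrightarrow z$, forcing $\mathcal{B}=\mathcal{C}$), not by the shortcut. Only case (c) has $Q=0$ automatically.
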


\begin{proof} Being special is preserved under $\bar{K}$-isomorphism. We can therefore check the condition for a standard model $Y_1(\underline{I})$ as in Proposition \ref{reconsquartic}.

In case (c) of Proposition \ref{reconsquartic} the curve $Y_1(\underline{I})$ is special and $Q=0$. Therefore it remains to consider the cases (a) and (b) of Proposition \ref{reconsquartic}, and the statement is automatically satisfied. 

The condition $Q=0$ implies that the polynomial $\mathcal{P}$ has a double root. In  case (b) of Proposition \ref{reconsquartic} we have $\cB=\cC$. This implies that $\tau(x:y:z)\mapsto (x:z:y)$ is an automorphism of $Y_1(\underline{I})$ that is not contained in the fixed Ciani subgroup $V$.  In case (a) the curve $Y(\underline{I})$ admits a similar automorphism permuting two of the variables, depending on which two roots are equal.

 We prove the reverse implication.  Assume that $Y(\underline{I})$ is special. From the classification of automorphism group of plane quartics, it follows that $\Aut_{\overline{L}}(Y(\underline{I}))$ contains the dihedral group $D_4$ of order $8$ as subgroup, see \cite{LRRS} or Figure \ref{fig:stratification}. Using the explicit description of these curves in  Examples \ref{exa:fieldD4} and \ref{exa:fieldD4C} it is easy to check that for any quartic in this family we have $Q=0$. 
\end{proof}

\subsection{Twists of non-special Ciani quartics}\label{sec:twist}
From this section on, we assume that $p>3$ is a prime and that $K$ is a finite extension of $\Q_p^\nr$. Write $\nu$ for the valuation on $K$. When working over an extension field $K'/K$, we always extend $\nu$ to $K'$. 
Let $\underline{I}\in K^4$ be a set of invariants such that $Y(\underline{I})$ is non-special. Let $L=L(\underline{I})/K$ be the minimal finite extension over which $Y_1(\underline{I})$ is defined, as in Proposition \ref{reconsquartic}. Since $K$ is a local field, we have that $L/K$ is Galois and $[L:K]\leq 3$. We write $Y_0(\underline{I})$ for the $K$-model of $Y(\underline{I})$ we constructed in Proposition \ref{prop:reconsK}.

In this section, we describe the $K$-twists of $Y_0(\underline{I})$. 
The results are a special case of the result of \cite{elg-twists}, presented in a way we will use afterwards.

The group $\Gamma_K:=\Gal(\overline{K}/K)$ acts on $\Aut(Y_0(\underline{I}))=V\simeq C_2^2$, with action induced by that of $\Gal(L/K)$ on the roots of $\mathcal{P}$. If $[L:K]=1$, the action is trivial. In the case  that $[L:K]=2$, exactly two of the roots of $\mathcal{P}$ are congruent modulo $\nu
$, i.e.
$\cB\equiv \cC\pmod{\nu}$. Then the generator $\tau\in\Gal(L/K)$ acts on $V$ by ${\,}^\tau\sigma_b=\sigma_c,$ where we use the notation from Notation \ref{not:sigma}. Similarly, if $[L:K]=3,$ all three roots of $\mathcal{P}$ are congruent modulo $\nu$, and  the generator of $\Gal(L/K)$ acts by cyclically permuting the nontrivial elements of order $2$ in $V$. 

The following lemma determines the $K$-models of $Y(\underline{I})$ up to $K$-isomorphism, that is the set of twists $\operatorname{Twist}(Y_0(\underline{I})/K)$ of $Y_0(\underline{I})$.

\begin{lemma}\label{lem:twists}
    Let $\underline{I}\in K^4$ and $Y_0(\underline{I})$, defined as in Proposition \ref{prop:reconsK}, be  non-special. Let $L$ be the splitting field of the polynomial $\mathcal{P}$ introduced in Section \ref{sec:recons}. Then
    $$\#\operatorname{Twist}(Y_0(\underline{I})/K)=\begin{cases}4 \text{ if }[L:K]=1,\\
    2 \text{ if }[L:K]=2,\\
    1 \text{ if }[L:K]=3.
    \end{cases}$$

   In addition:
          \begin{enumerate}[(a)]
    \item If $[L:K]=1$, then the Ciani quartics $Y'_0(\underline{I})/K$  given by the isomorphisms $$
\begin{pmatrix}1 & 0 & 0 \\ 0 & 1 & 0 \\ 0 & 0 & 1\end{pmatrix},\begin{pmatrix}\pi & 0 & 0 \\ 0 & 1 & 0 \\ 0 & 0 & 1\end{pmatrix},\begin{pmatrix}1 & 0 & 0 \\ 0 & \pi & 0 \\ 0 & 0 & 1\end{pmatrix},\begin{pmatrix}1 & 0 & 0 \\ 0 & 1 & 0 \\ 0 & 0 & \pi\end{pmatrix}:\, Y'_0(\underline{I})\rightarrow Y_0(\underline{I}),
$$
correspond to different elements in $\operatorname{Twist}(Y_0(\underline{I})/K)$. Here $\pi\in K'$ is an element with $\nu(\pi)=1/2$ in a quadratic extension $K'/K$.

\item If $[L:K]=2$, then the Ciani quartics $Y'_0(\underline{I})/K$ given by the isomorphisms
$$
\begin{pmatrix}1 & 0 & 0 \\ 0 & 1 & \mathcal{B}  \\ 0 & 1 & \mathcal{C} \end{pmatrix},\begin{pmatrix}\pi_1 & 0 & 0 \\ 0 & 1 & \mathcal{B}  \\ 0 & \zeta_4 & \zeta_4\mathcal{C} \end{pmatrix}:\, Y'_0(\underline{I})\rightarrow Y_0(\underline{I}),
$$
correspond to different elements in $\operatorname{Twist}(Y_0(\underline{I})/K)$. Here $\pi_1\in K''$ is an element with $\nu(\pi_1)=1/4$ in an extension $K''/K$ and $\zeta_4$ is a primitive $4$-th root of unity. 
    \end{enumerate}
        \end{lemma}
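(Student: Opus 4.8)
The plan is to compute the twist set $\operatorname{Twist}(Y_0(\underline{I})/K)$ via Galois cohomology, using the identification
\[
\operatorname{Twist}(Y_0(\underline{I})/K) \;\longleftrightarrow\; H^1\!\bigl(\Gamma_K, \Aut_{\overline{K}}(Y_0(\underline{I}))\bigr) \;=\; H^1\!\bigl(\Gamma_K, V\bigr),
\]
where $V\simeq C_2^2$, with $\Gamma_K$ acting through $\Gal(L/K)$ as described just before the statement (trivially if $[L:K]=1$; by swapping $\sigma_b\leftrightarrow\sigma_c$ if $[L:K]=2$; by a $3$-cycle on the nontrivial involutions if $[L:K]=3$). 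This is exactly the setup of \cite{elg-twists}, and since $Y_0(\underline{I})$ is non-special its full automorphism group is $V$, so there is no ambiguity. The cardinality statement then follows from a direct cohomology computation in each of the three cases, and the explicit isomorphisms in (a) and (b) are obtained by writing down cocycles representing each class and solving the corresponding Galois descent (Weil cocycle) condition.

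For the cardinality count I would proceed case by case. When $[L:K]=1$ the action is trivial, $\Gamma_K$ acts on $V\simeq(\Z/2)^2$ trivially, and since the module is $2$-torsion and $K=\Q_p^{\nr}$ (more generally a field with $H^1(\Gamma_K,\Z/2)\simeq K^*/(K^*)^2$ of order $2$, generated by the ramified quadratic extension), we get $H^1(\Gamma_K,V)=\operatorname{Hom}(\Gamma_K,(\Z/2)^2)\simeq (K^*/(K^*)^2)^2$, which has order $4$. When $[L:K]=2$, write $V=\langle\sigma_b\rangle\times\langle\sigma_c\rangle$ with $\tau$ swapping the two factors; then $V\simeq \Z/2\otimes_{\Z/2}(\Z/2[\Gal(L/K)])$ is, up to the diagonal copy of $\Z/2$ on which $\Gal(L/K)$ acts trivially, an induced (co-induced) module, so by Shapiro's lemma the ``swapped'' part contributes $H^1(\Gal(L),\Z/2)$ restricted appropriately and a short inflation-restriction / direct computation of cocycles modulo coboundaries gives $|H^1(\Gamma_K,V)|=2$. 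When $[L:K]=3$, the module $V$ is the ``augmentation'' submodule of $\Z/2[\Z/3]$ (the $2$-dimensional irreducible $\F_2[\Z/3]$-module); since $\gcd(2,3)=1$ one checks $H^1(\Z/3,V)=0$, and then inflation-restriction together with $H^1(\Gal(\overline{K}/L),V)^{\Gal(L/K)}$ (which is again seen to vanish, as the $\Gal(L/K)$-invariants of the relevant group of homomorphisms are trivial) forces $H^1(\Gamma_K,V)=0$, i.e.\ a unique twist.

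For the explicit models in (a) and (b) I would exhibit, for each nontrivial cohomology class, a matrix $M_\sigma$ satisfying the Weil cocycle condition $M_{\sigma\tau}=M_\sigma\,{}^\sigma M_\tau$ in $\PGL_3(\overline{K})$ together with $M_\sigma\,{}^\sigma Y_0(\underline{I})=Y_0(\underline{I})$, and then show that the descended curve $Y_0'(\underline{I})$ is precisely the quartic obtained by the change of coordinates $\phi$ listed in the statement. Concretely: in case $[L:K]=1$ the four listed matrices are the scaling maps by $\pi$ (with $\nu(\pi)=1/2$) on each of the three coordinates plus the identity; since conjugation of the diagonal $V$ by a diagonal matrix is trivial, the cocycle condition reduces to $M_\sigma\,{}^\sigma M_\sigma^{-1}$ being the coboundary attached to the chosen quadratic character, and distinctness of the four twists follows from the fact that the four scalings give pairwise non-$K$-isomorphic curves (they realise the four classes of $(K^*/(K^*)^2)^2$ — one can see non-isomorphism directly from the valuations of the resulting coefficients, or cite \cite{elg-twists}). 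In case $[L:K]=2$ the first matrix is the base-change $\phi$ from Table \ref{tab:reconsK} already realising $Y_0(\underline{I})$, and the second twists it further by a ramified element $\pi_1$ with $\nu(\pi_1)=1/4$ together with $\zeta_4$ to make it compatible with the $\tau$-action that swaps $y,z$; one checks the cocycle condition using ${}^\tau V=V$ with $\sigma_b\leftrightarrow\sigma_c$ and that the two classes are distinct because the second genuinely requires a ramified extension while the first is already defined over $K$.

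The main obstacle I expect is the $[L:K]=2$ case: here the Galois module $V$ is neither trivial nor cohomologically trivial, so the cohomology computation requires care with inflation-restriction, and more importantly the explicit descent — checking that the matrix $\operatorname{diag}$-type cocycle built from $\pi_1$ and $\zeta_4$ actually satisfies the Weil cocycle condition in $\PGL_3$ (not just $\GL_3$) and that the resulting curve is defined over $K$ — is the delicate bookkeeping step. Everything else (the $[L:K]=1$ count, the $[L:K]=3$ vanishing, and verifying the listed matrices define $K$-models) is a routine check, and the overall structure is dictated by \cite{elg-twists}, which we invoke for the general twist-classification framework.
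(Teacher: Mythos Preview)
Your proposal is correct and lands on the same answer through a different computation of $H^1(\Gamma_K,V)$. The paper's argument is enumerative: it invokes the framework of \cite{elg-twists} to observe that every cocycle splits over a cyclic extension with Galois group inside $V\rtimes\Gal(L/K)$, lists the finitely many candidate cocycles (each determined by the image of a generator), and checks equivalences by hand. You instead compute the cohomology structurally: $\operatorname{Hom}(\Gamma_K,V)\simeq (K^*/(K^*)^2)^2$ when the action is trivial; Shapiro's lemma when $[L:K]=2$ (since $V\simeq \operatorname{Ind}_{\Gamma_L}^{\Gamma_K}\mathbb{F}_2$, giving $H^1(\Gamma_L,\mathbb{F}_2)\simeq L^*/(L^*)^2\simeq\Z/2$); and coprimality of $|V|$ and $|\Gal(L/K)|$ plus inflation--restriction when $[L:K]=3$. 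Your route is cleaner conceptually and avoids any case-by-case cocycle bookkeeping, at the cost of needing the module-theoretic identifications; the paper's route is more elementary and closer to the explicit-twist machinery already being cited. For the explicit isomorphisms in (a) and (b), both you and the paper verify them the same way, via $\xi(\sigma)=\phi\circ{}^\sigma\phi^{-1}$.

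One small cleanup: in your $[L:K]=2$ discussion, the phrase ``up to the diagonal copy of $\Z/2$ on which $\Gal(L/K)$ acts trivially'' is misleading, since $\mathbb{F}_2[\Z/2]$ does not split as a direct sum of the trivial and sign modules in characteristic $2$. You do not need that splitting: $V$ \emph{is} the induced module $\operatorname{Ind}_{\Gamma_L}^{\Gamma_K}\mathbb{F}_2$ on the nose, so Shapiro applies directly and gives $|H^1|=2$ without any further inflation--restriction.
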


\begin{proof} This is a consequence of \cite[Prop.~5.2]{Twistsg3}, but we include the proof for the sake of completeness. In order to compute $\operatorname{Twist}(Y_0(\underline{I})/K)=H^1(\Gal(\bar{K}/K), V)$ we just notice that every cocycle splits (in the sense of \cite[Def. 2.1]{elg-twists}) over a Galois extension  with Galois group a subgroup of $V\rtimes\operatorname{Gal}(L/K)$, see \cite[Sec.~4, Step 2]{elg-twists}. In our case, this gives a finite number of cyclic extensions. A cocycle is then determined by the image of a generator  of the Galois group in $V$. Two cocycles are equivalent if they are conjugated as in Equation $(2)$ in \cite{elg-twists}. This is again a finite number of checks. We compute the numbers given in the statement of the lemma. 
It is straightforward to check that the isomorphisms $\phi$ that we give define non-equivalent cocycles in $H^1(\Gal(\bar{K}/K), V)$ by the rule:  $\xi(\sigma)=\phi\circ^\sigma\phi^{-1}$ for each $\sigma\in\Gal(\bar{K}/K)$. 
\end{proof}

\section{Stable reduction and conductor exponents}\label{sec:stablered}

In this section, we collect some results on the stable reduction of Ciani quartics. Let $K$ be as in the previous section. We write $k$ for the residue field of $K$. In the rest of the paper, we will mostly assume that $K=\Q_p^{\nr}$ as this is the case that is most interesting for our purposes.

\begin{definition}\label{def:goodred}
Let $Y/K$ be a smooth projective and absolutely irreducible plane quartic.  
\begin{itemize}
    \item[(a)] The curve $Y$ has \emph{good reduction} if there exists a flat and proper $\mathcal{O}_K$-scheme $\mathcal{Y}$ with generic fiber $Y$ such that the special fiber $\overline{Y}:=Y\otimes_{\mathcal{O}_K}k$, which we call the \emph{reduction} of $Y$, is smooth.  
    \item[(b)] The curve $Y$ has \emph{good quartic reduction} if $Y$ has good reduction and its reduction $\overline{Y}$ is also a plane quartic. If $Y$ has good but not good quartic reduction, we say that it has \emph{good hyperelliptic reduction}.
    \item[(c)]  We say that $Y$ has \emph{potentially} good quartic (resp. hyperelliptic) reduction if $Y$ has good quartic (resp. hyperelliptic) reduction after replacing $K$ by a finite extension. 
    \item[(d)] Otherwise, we say that $Y$ has \emph{geometrically bad reduction}. 
\end{itemize}
\end{definition}

We note that, if $Y$ has good quartic reduction over $K$, then the $\mathcal{O}_K$-scheme from Definition \ref{def:goodred} may be defined by a homogeneous polynomial $F\in \mathcal{O}_K[x,y,z]$ of degree $4$ whose reduction $\overline{F}$ modulo the uniformising element $\pi$ of $K$ is a quartic with $\Delta(\bar{F})\neq 0$.

\begin{lemma}\label{lem:badprimes}
    \label{lem:hypo} Let $\underline{I}\in K^4$. A Ciani quartic  $Y$ with $[\underline{I}(Y)]=[\underline{I}]$ has potentially good quartic reduction at $\nu$ if and only if $\nu(\Delta(\underline{I}_\nu))=0$. 
\end{lemma}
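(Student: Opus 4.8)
The statement asserts an equivalence between potentially good quartic reduction of $Y$ and the vanishing of $\nu(\Delta(\underline{I}_\nu))$, where $\underline{I}_\nu$ is a normalisation of the Ciani invariants. The natural strategy is to reduce to a concrete model and to exploit the fact that potentially good reduction is a geometric property, invariant under base change, so that we may freely pass to a finite extension $K'/K$ over which $Y$ admits a standard model $Y_1$ (Lemma \ref{lem:fod_standard}) and over which the normalisation $\underline{I}_\nu$ is already defined. The key computational input is the discriminant formula $\Delta(Y_1) = 2^{20} I_3 I_3''^4 I_6^2 = \Delta(\underline{I})$ from Equation \eqref{eq:disc}, together with the transformation behaviour $\Delta({}^M F) = (\det M)^{36}\Delta(F)$ and $\Delta(\lambda F) = \lambda^{27}\Delta(F)$ from Lemma \ref{trans}.

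\textbf{Forward direction.} First I would prove that $\nu(\Delta(\underline{I}_\nu)) = 0$ implies potentially good quartic reduction. After base change we may assume $\underline{I} = \underline{I}_\nu$ is normalised, so all $\nu(I_j)\ge 0$. Using the reconstruction of Proposition \ref{reconsquartic}, I would pick the appropriate standard model $Y_1(\underline{I})$ among cases (a), (b), (c); in each case the defining coefficients ($\mathcal{A},\mathcal{B},\mathcal{C}$, the roots of $\mathcal{P}$, and $P$, $S_2$, $r$) are integral over $\cO_K$ after passing to the splitting field $L$, because they are roots of monic polynomials whose coefficients are polynomials in the $I_j$ with integer coefficients (one should check that the leading scaling, e.g. $P^{18}$ in case (a), does not destroy integrality — this is where normalisation is used). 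Then the reduction $\overline{Y_1(\underline{I})}$ is a plane quartic over $k$ whose discriminant is (up to the explicit power factor) the reduction of $\Delta(\underline{I})$, hence nonzero by hypothesis; so $\overline{Y_1(\underline{I})}$ is smooth and $Y$ has potentially good quartic reduction.

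\textbf{Converse direction.} For the converse, suppose $Y$ has potentially good quartic reduction. Over some finite extension $K'$ there is an integral model $\mathcal{Y}$ given by $F\in\cO_{K'}[x,y,z]$ with $\Delta(\bar F)\ne 0$, i.e.\ $\nu(\Delta(F)) = 0$. The Ciani invariants $\underline{I}(F)$ of this model are polynomials in the coefficients of $F$, hence lie in $\cO_{K'}$, and $\Delta(\underline{I}(F)) = \Delta(F)$ has valuation $0$; in particular $\underline{I}(F)$ is already normalised, so $\underline{I}(F)$ is a valid normalisation $\underline{I}_\nu$ and $\nu(\Delta(\underline{I}_\nu)) = 0$. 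The only subtlety is that the good-reduction model $\mathcal{Y}$ need not be in standard form and need not be the one coming from our reconstruction — but that is harmless, since $\Delta(\underline{I}_\nu)$ depends only on $[\underline{I}]\in\mathbb{P}^3_{1,1,1,2}$ up to the scaling built into the definition of normalisation, and any two normalisations differ by a unit (in the relevant weighted sense). I would spell out this last point: if $\underline{I}'$ and $\underline{I}''$ are both normalised and define the same point of $\mathbb{P}^3_{1,1,1,2}$, they differ by scaling by a unit $u$ with $\nu(u)=0$, so $\Delta(\underline{I}') = u^{?}\Delta(\underline{I}'')$ has the same valuation.

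\textbf{Main obstacle.} The delicate point is bookkeeping of valuations in the reconstruction: in Proposition \ref{reconsquartic}(a) the model $Y_1(\underline{I})$ has discriminant $\Delta(\underline{I})P^{18}$ and coefficients scaled by powers of $P = 8I_3 + I_3' - I_3''$, so if $\nu(P) > 0$ one must either rescale the equation by a power of $P$ (legitimate, since it does not change the curve) and track the resulting change of discriminant via Lemma \ref{trans}, or argue directly that the reduction of the (possibly non-minimal) model is still smooth because its discriminant still has valuation $0$. Handling cases (b) and (c) of the reconstruction, and the analogous scaling by $S_2$ or by $I_3S_1$, requires the same care; the cleanest route is to observe that in every case the relevant scaling factor has nonnegative valuation when $\underline{I}$ is normalised and $\Delta(\underline{I})$ is a unit (since $\Delta(\underline{I}) = 2^{20}I_3 I_3''^4 I_6^2$ being a unit forces $I_3, I_3'', I_6$ to be units, which controls $P$, $S_1$, $S_2$). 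This reduction to the unit case is what makes the argument go through cleanly, and I would state it as the first step.
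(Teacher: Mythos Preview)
The paper's own proof is a one-line citation to \cite[Prop.~6]{BCKKLS}, so your proposal is necessarily a different, more self-contained approach. Your forward direction is essentially the content of the paper's later Propositions~\ref{prop:smoothmodel_new} and~\ref{prop:val_equal}: starting from the reconstructed standard model $Y_1(\underline{I})$ and rescaling coordinates to kill the extraneous factor in the discriminant. Note, however, that your ``cleanest route'' does not work as stated: knowing $I_3,I_3'',I_6$ are units does \emph{not} force $\nu(P)=0$ (take $8I_3\equiv I_3''-I_3'\pmod{\pi}$), and rescaling the \emph{equation} by a power of $P$ will not help --- what is needed is a diagonal change of \emph{variables} absorbing the valuations of the roots $\cA,\cB,\cC$, exactly as in \eqref{eq:psi}. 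This is a bookkeeping issue rather than a conceptual one, and you have flagged it.

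The genuine gap is in the converse. You write ``the Ciani invariants $\underline{I}(F)$ of this model are polynomials in the coefficients of $F$'', but this is only true when $F$ is already in standard form: the formulas \eqref{eq:inv} are polynomials in six coefficients $A,B,C,a,b,c$, not in the fifteen coefficients of a general ternary quartic. You then acknowledge that the good-reduction model need not be standard, but your dismissal (``harmless, since $\Delta(\underline{I}_\nu)$ depends only on $[\underline{I}]$'') does not address the problem: you have not produced any normalised tuple at all, let alone shown its discriminant is a unit. What is missing is the observation that the Ciani subgroup $V$ acts on the \emph{smooth model} $\mathcal{Y}/\cO_{K'}$ (by uniqueness of the smooth model when $g\geq 2$), so after a further unramified-in-residue extension the three involutions lie in $\PGL_3(\cO_{K'})$ and can be simultaneously diagonalised by some $M\in\GL_3(\cO_{K'})$. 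Then ${}^MF$ is an \emph{integral standard model} with $\nu(\Delta({}^MF))=36\nu(\det M)+\nu(\Delta(F))=0$, and now \eqref{eq:inv} applies to give an integral tuple $\underline{I}({}^MF)$ with $\nu(\Delta(\underline{I}({}^MF)))=0$; this tuple is normalised since $I_3,I_3'',I_6$ are units. Once you insert this step, your argument goes through.
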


\begin{proof}
The statement follows from \cite[Prop.~6]{BCKKLS}. 
\end{proof}

In Theorem 2 of \cite{BCKKLS} one finds a characterisation of the type of stable reduction of $Y_{\underline{I}}$ at $\nu$ in terms of the invariants in the case that $Y$ has geometrically bad reduction at $\nu$. A Ciani quartic with $\nu(\Delta(\underline{I}_\nu))>0$ may still have potentially good reduction at $\nu$. However, in this case, the reduction of $Y$ is hyperelliptic. 

\begin{lemma}\label{lem:potgoodhyper}
Let $Y$ be a Ciani plane quartic with invariants $\underline{I}$, which we assume to be normalised, i.e. 
 $\underline{I}_{\nu}=\underline{I}$. Then $Y$ has potentially good hyperelliptic reduction at $\nu$ if and only if 
\[
0<3\nu(I''_3)=2\nu(I_6)\leq 6\nu(I'_3).
\]
\end{lemma}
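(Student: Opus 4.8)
The plan is to reduce the statement to a computation about the valuations of the Ciani invariants of a convenient model. By Lemma~\ref{lem:badprimes}, having potentially good hyperelliptic (i.e.\ good, but not good quartic) reduction at $\nu$ is equivalent to $\nu(\Delta(\underline I_\nu))>0$ together with $Y$ having potentially good reduction; since we assume $\underline I=\underline I_\nu$ is normalised, the condition $\nu(\Delta(\underline I))>0$ translates via $\Delta(\underline I)=2^{20}I_3 I_3''^4 I_6^2$ and $p>2$ into $\nu(I_3)>0$, or $\nu(I_3'')>0$, or $\nu(I_6)>0$. So the real content is: among normalised $\underline I$ with $\nu(\Delta)>0$, exactly those with $0<3\nu(I_3'')=2\nu(I_6)\le 6\nu(I_3')$ correspond to potentially good (hyperelliptic) reduction, and all others to geometrically bad reduction. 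The latter half should follow from the classification of the stable reduction type in \cite[Thm.~2]{BCKKLS}, so I would first locate in that reference the precise numerical criterion and only need to massage it into the displayed inequalities.

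First I would pass to a standard model. By Lemma~\ref{lem:fod_standard} there is a standard model $Y_1$ over an at-most-$S_3$ extension $L/K$, given by \eqref{eq:standard_model}, with invariants as in \eqref{eq:inv}; since $\nu$ is unramified-insensitive up to the tame degree $[L:K]\le 6$ and $p>3$, the valuations $\nu(I_3),\nu(I_3'),\nu(I_3''),\nu(I_6)$ of the normalised tuple are unchanged by this base change, and potentially good reduction is a geometric notion. Next, I would realise $E_i = Y_1/\langle\sigma_i\rangle$ as the three elliptic quotients; these are exactly the curves whose reduction behaviour governs that of $Y$. Concretely, after the usual coordinate manipulations (completing squares in the conic $X = Y_1/V$), $E_a, E_b, E_c$ have $j$-invariants and minimal discriminants that are rational functions of $A,B,C,a,b,c$, and hence — being invariants of the pair $(Y,V)$ — expressible through $I_3, I_3', I_3'', I_6$. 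The key structural input is that $Y$ has potentially good reduction if and only if all three $E_i$ have potentially good reduction, and $Y$ has good \emph{quartic} reduction exactly when additionally the reduction map does not collapse the three branch pairs on the conic. Hyperelliptic reduction is precisely the degenerate case where the reduction of the conic $X$ acquires an extra involution but the $E_i$ still reduce well.

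The core computation is then to express the condition ``all $E_i$ have potentially good reduction, but $X$ degenerates'' in terms of $\nu$ of the invariants. I expect that after normalising one finds: good quartic reduction $\iff \nu(I_3)=\nu(I_3'')=\nu(I_6)=0$ (consistent with Lemma~\ref{lem:badprimes}); geometrically bad $\iff$ at least one $E_i$ has potentially multiplicative reduction, which shows up as an unbalanced valuation pattern; and the hyperelliptic case $\iff$ the valuations of $I_3'', I_6$ scale together as $3\nu(I_3'')=2\nu(I_6)>0$ with $\nu(I_3')$ large enough, $6\nu(I_3')\ge 3\nu(I_3'')$, not to interfere. Here the relation \eqref{eq:Irel} linking $I$ and the other invariants, and the factored form $I_6=(a^2-4BC)(b^2-4AC)(c^2-4AB)$ together with $\Delta$'s factorisation, are the levers: $I_6$ up to squares is the product of the three ``elliptic discriminant'' factors, and $I_3'' $ governs the third one cleanly, so the proportionality $3\nu(I_3'')=2\nu(I_6)$ encodes that all three factors acquire the \emph{same} positive valuation (a scaling phenomenon, not a genuine degeneration), while any deviation forces one factor to vanish faster — the signature of bad reduction. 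I would carry this out by writing the stable-reduction trichotomy of \cite[Thm.~2]{BCKKLS} explicitly, checking it case-by-case against the inequality $0<3\nu(I_3'')=2\nu(I_6)\le 6\nu(I_3')$.

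The main obstacle, I anticipate, is bookkeeping: matching the (presumably case-split) criterion of \cite[Thm.~2]{BCKKLS} — which may be phrased in terms of Newton polygons or valuation vectors of $(I_3,I_3',I_3'',I_6)$ rather than the clean proportionality above — to the single displayed condition, and in particular verifying that the ``$\le 6\nu(I_3')$'' bound is exactly the threshold separating good hyperelliptic reduction from the bad-reduction strata where $E_b$ or $E_c$ degenerates. If \cite{BCKKLS} already isolates the hyperelliptic-reduction locus, the proof is essentially a one-line citation plus the translation $\nu(\Delta(\underline I))>0$; if not, the work is in redoing the relevant part of that analysis for the normalised tuple, which is the step I'd budget the most effort for.
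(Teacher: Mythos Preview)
Your proposal is not wrong, but it is far more elaborate than what the paper actually does. The paper's entire proof is a single citation: ``This is a special case of \cite[Theorem~3]{BCKKLS}.'' In other words, the cited reference already contains the precise numerical criterion for potentially good hyperelliptic reduction in terms of the Ciani invariants, so no massaging is required.

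Your plan works from \cite[Theorem~2]{BCKKLS} (the classification of geometrically bad reduction types) and proposes to rule out all bad strata case-by-case, supplemented by an analysis of the elliptic quotients $E_i$. That would give an independent derivation of the criterion, and the heuristic you sketch --- that $3\nu(I_3'')=2\nu(I_6)$ forces the three factors of $I_6$ to have equal valuation, while the bound on $\nu(I_3')$ prevents a genuine degeneration --- is the right intuition. But this is exactly the content of \cite[Theorem~3]{BCKKLS}, which you overlooked in favour of Theorem~2. You even anticipate this possibility in your final paragraph: yes, the reference does isolate the hyperelliptic-reduction locus, so the proof really is a one-line citation.
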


\begin{proof} This is a special case of \cite[Theorem 3]{BCKKLS}.
\end{proof}

Let $Y/K$ be a curve with potentially good reduction. It follows from the Stable Reduction Theorem, proved by Deligne--Mumford (\cite{DM}), and the assumption that the residue field $k$ of $K$ is algebraically closed, that there is a unique minimal extension $M/K$ over which $Y_M:=Y\otimes_K M$ has good reduction.  The uniqueness of $M$ implies that the extension $M/K$ is Galois. We write $G:=\Gal(M/K)$. 
Since $g(Y)=3\geq 2$, there exists a unique smooth model $\mathcal{Y}$ of $Y_M$ over $\mathcal{O}_M$. 

The Galois group $G$ acts faithfully and $k$-linearly on the special fiber $\overline{Y}$ of $\mathcal{Y}$. We obtain an embedding
\[
G\hookrightarrow \Aut_k(\overline{Y}).
\]

We define the \emph{inertial reduction of $Y/K$} as the curve $\overline{Z}:=\overline{Y}/G.$

An important arithmetic invariant associated with a curve $Y$ over a local field is its \emph{conductor exponent} $f_p=f_p(Y)$. It is a number that gives information on the reduction type of the Jacobian of $Y$, and on the Galois representation associated to it. We refrain from giving a precise  definition, for which we refer to \cite{BW}, but instead we give a formula for computing it in our situation.
Since we assume that $p>3$, it follows from Corollary \ref{cor:boundLK} that the conditions of the next proposition are satisfied in our situation.

\begin{proposition}\label{prop:fp}
 Let $p>2$ be a prime and $K$ be a finite extension of $\mathbb{Q}_p^{\nr}.$ Let $Y/K$ be a smooth projective curve with good reduction over a tamely ramified extension $M/K$.   Then the conductor exponent of $Y$ satisfies
 \begin{equation}\label{eq:fp}
 f_p(Y)=2g(Y)-2g(\overline{Z}).
 \end{equation}
 \end{proposition}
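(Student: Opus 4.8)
The plan is to derive the conductor exponent formula from the standard description of the $\ell$-adic Tate module of a curve with potentially good reduction, specialised to the tame case. Let $\ell \neq p$ be an auxiliary prime, and consider $V_\ell = T_\ell(\mathrm{Jac}(Y)) \otimes \Q_\ell$, a $2g(Y)$-dimensional $\Q_\ell$-representation of $\Gamma_K = \Gal(\bar K/K)$. The conductor exponent is $f_p(Y) = \epsilon(V_\ell) + \delta(V_\ell)$, where $\epsilon$ is the codimension of the inertia invariants and $\delta$ is the Swan conductor. Since $K$ has algebraically closed residue field, its absolute Galois group equals the inertia group, and by hypothesis $Y$ acquires good reduction over the tamely ramified extension $M/K$; hence the action of $\Gamma_M$ on $V_\ell$ is trivial (by the criterion of N\'eron--Ogg--Shafarevich, good reduction over $M$ means $\Gamma_M$ acts trivially on $V_\ell$). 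Because $M/K$ is tame, wild inertia acts trivially, so $\delta(V_\ell) = 0$ and $f_p(Y) = \dim V_\ell - \dim V_\ell^{\Gamma_K} = 2g(Y) - \dim V_\ell^{G}$, where $G = \Gal(M/K)$ acts on $V_\ell$ through its faithful action on $\overline Y$.

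The second step is to identify $\dim V_\ell^G$ with $2g(\overline Z)$. Since $\Gamma_M$ acts trivially on $V_\ell$, the $\Gamma_K$-action factors through $G = \Gal(M/K)$, and $V_\ell \cong H^1_{\text{\'et}}(\overline Y_{\bar k}, \Q_\ell)^\vee$ as a $G$-module, compatibly with the $G$-action on $\overline Y$ coming from the embedding $G \hookrightarrow \Aut_k(\overline Y)$ recalled just before the proposition. Taking $G$-invariants, $V_\ell^G \cong \bigl(H^1_{\text{\'et}}(\overline Y_{\bar k},\Q_\ell)^G\bigr)^\vee$. Now I invoke the fact that for a finite group $G$ acting on a smooth projective curve $\overline Y$ over an algebraically closed field of characteristic $p$ with $p \nmid |G|$ (which holds here since $G$ is tame), the quotient map $\overline Y \to \overline Y/G = \overline Z$ induces an isomorphism $H^1_{\text{\'et}}(\overline Z_{\bar k},\Q_\ell) \xrightarrow{\sim} H^1_{\text{\'et}}(\overline Y_{\bar k},\Q_\ell)^G$ — this is the transfer/averaging argument, valid because $|G|$ is invertible in $\Q_\ell$ and in $k$. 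Hence $\dim V_\ell^G = \dim H^1_{\text{\'et}}(\overline Z_{\bar k},\Q_\ell) = 2g(\overline Z)$ (using that $\overline Z$ is smooth projective, being the quotient of a smooth projective curve by a finite group, so $g(\overline Z)$ is its genus). Combining the two steps gives $f_p(Y) = 2g(Y) - 2g(\overline Z)$.

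The only subtlety worth spelling out is why the various inputs genuinely apply in our situation: we need $M/K$ tame so that wild inertia is trivial and $p \nmid |G|$, and the excerpt notes that Corollary \ref{cor:boundLK} guarantees $M/K$ is tamely ramified when $p > 3$. One should also be careful that $\overline Z = \overline Y/G$ is taken as a scheme quotient and is automatically smooth and geometrically connected, so that $g(\overline Z)$ in \eqref{eq:fp} is well defined. I expect the main obstacle — though it is really a matter of citing the literature cleanly rather than proving anything new — to be stating the conductor formula $f_p = \epsilon + \delta$ and the vanishing of the Swan part in precisely the generality needed; for this one can cite \cite{BW} directly, where the relevant formalism and the reduction to $2g(Y) - 2g(\overline Z)$ in the tame case is developed. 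The cohomological invariance $H^1(\overline Z) \cong H^1(\overline Y)^G$ under tame quotients is classical and can be cited from standard references on group actions on curves.
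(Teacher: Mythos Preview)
Your argument is correct and is essentially an unpacking of what the paper does: the paper's proof is a one-line citation to \cite[Theorem 1.1]{BW}, and your proposal spells out the standard computation underlying that result (vanishing of the Swan part under the tameness hypothesis, plus the identification $H^1_{\et}(\overline{Z},\Q_\ell)\simeq H^1_{\et}(\overline{Y},\Q_\ell)^G$ via transfer). There is no substantive difference in approach---you simply give the details where the paper defers to the reference.
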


\begin{proof}
 This is a very special case of    \cite[Theorem 1.1]{BW}.
\end{proof}

The following general result follows  from Proposition \ref{prop:fp}.

\begin{corollary}\label{cor:conductor_good}
Let  $K$ be a field containing $\mathbb{Q}_p^{\nr}$ and let $Y/K$ be a Ciani quartic. 
The following are equivalent:
\begin{itemize}
    \item[(a)]  $Y$ has good reduction,
    \item[(b)]  $f_p=0$.
\end{itemize}

\end{corollary}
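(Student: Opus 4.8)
The plan is to deduce Corollary \ref{cor:conductor_good} directly from Proposition \ref{prop:fp}, using the dichotomy between potentially good and geometrically bad reduction established in Lemmas \ref{lem:badprimes} and \ref{lem:potgoodhyper}. First I would observe that the implication (a)$\Rightarrow$(b) is essentially formal: if $Y$ has good reduction over $K$, then $M=K$, the Galois group $G$ is trivial, so the inertial reduction is $\overline{Z}=\overline{Y}$ with $g(\overline{Z})=g(Y)=3$, and hence $f_p(Y)=2g(Y)-2g(\overline{Z})=0$ by Equation \eqref{eq:fp}. One must check that Proposition \ref{prop:fp} applies, i.e.\ that $Y$ has good reduction over a \emph{tamely} ramified extension; since $M=K$ here and $p>3$ (so in particular $p>2$), this is immediate.

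For the converse (b)$\Rightarrow$(a), I would argue by contraposition, splitting into the two possible reduction behaviours. If $Y$ does not have good reduction, there are two cases. In the potentially good case, $Y$ acquires good reduction over some minimal extension $M/K$, which is nontrivial; by Corollary \ref{cor:boundLK} (invoked just before Proposition \ref{prop:fp}) the extension $M/K$ is tamely ramified, so Proposition \ref{prop:fp} applies and $f_p(Y)=2g(Y)-2g(\overline{Z})=6-2g(\overline{Z})$. One then needs that $g(\overline{Z})<3$: the inertial reduction $\overline{Z}=\overline{Y}/G$ is a quotient of the genus-$3$ curve $\overline{Y}$ by the nontrivial group $G$, and since $G$ acts faithfully and $k$-linearly (hence by a nontrivial automorphism of a curve of genus $\geq 2$) the quotient has strictly smaller genus; therefore $f_p(Y)\geq 2>0$. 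In the geometrically bad case, $Y$ has bad reduction over every finite extension, so the Jacobian of $Y$ does not acquire good reduction, and the conductor exponent is positive; here I would cite the standard fact (via \cite{BW}, or the N\'eron–Ogg–Shafarevich criterion applied to $\mathrm{Jac}(Y)$) that $f_p=0$ forces potentially good reduction of the Jacobian, which for a curve of genus $3$ — together with the classification underlying Lemmas \ref{lem:badprimes} and \ref{lem:potgoodhyper} — contradicts geometrically bad reduction of $Y$.

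The main obstacle I anticipate is the bad-reduction case: Proposition \ref{prop:fp} is only stated for curves with good reduction over a tame extension, so it cannot be applied to conclude $f_p>0$ when $Y$ has geometrically bad reduction. Here one needs a separate input, namely that a curve with good reduction (of its Jacobian) has vanishing conductor, and conversely that $f_p=0$ implies the N\'eron model of $\mathrm{Jac}(Y)$ has abelian reduction, which by potential good reduction and the purity/semistability picture is incompatible with the stable reduction of $Y$ being singular. I would phrase this carefully, pointing out that for our Ciani quartics the trichotomy of Theorem \ref{thm:main} (good quartic, good hyperelliptic, or bad) together with the results recalled from \cite{BCKKLS} means the "geometrically bad" case always produces a reducible or singular stable model, whose Jacobian has a nontrivial toric or additive part, giving $f_p>0$. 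A clean alternative is simply to note that combining the potentially-good analysis above with the fact that $f_p$ only depends on $\mathrm{Jac}(Y)$, it suffices to treat potentially good reduction of the Jacobian; and if $\mathrm{Jac}(Y)$ has potentially good reduction but $Y$ has geometrically bad reduction, the special fibre of the stable model of $Y$ is a stable curve of genus $3$ all of whose components have good reduction but which is reducible, forcing $g(\overline{Z})<3$ again once one replaces $\overline{Y}$ by its stable model and takes the quotient — so $f_p>0$ in all remaining cases, completing the contrapositive.
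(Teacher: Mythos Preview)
Your forward implication and your Case~1 of the converse are correct. In fact your Case~1 argument (Proposition~\ref{prop:fp} plus Riemann--Hurwitz for the faithful action of the nontrivial group $G$ on a genus-$3$ curve) is a clean direct computation that the paper does not spell out; the paper instead handles the entire converse in one stroke via N\'eron--Ogg--Shafarevich, without splitting off the potentially-good case.

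For Case~2 you have identified the right mechanism but not the right reference, and your final paragraph drifts. The paper's argument is simply: $f_p=0$ implies, by N\'eron--Ogg--Shafarevich, that $\mathrm{Jac}(Y)$ has good reduction over $K$; but the classification in \cite[Theorems~2 and~3]{BCKKLS} shows that every geometrically bad stable reduction of a Ciani quartic contains a loop, i.e.\ is not of compact type, so $\mathrm{Jac}(Y)$ has a nontrivial toric part and cannot have good reduction. That is the input you need --- Lemmas~\ref{lem:badprimes} and~\ref{lem:potgoodhyper} only tell you \emph{when} reduction is potentially good, not what the bad stable models look like. Your ``clean alternative'' at the end conflates the stable model with the smooth model and tries to reuse the quotient construction $\overline{Z}=\overline{Y}/G$ from Proposition~\ref{prop:fp} in a setting where it is not defined; drop that paragraph and just cite the loop fact from \cite{BCKKLS}.
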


\begin{proof} 

The forward implication follows from Proposition \ref{prop:fp}. For the converse, it follows from the N\'eron--Ogg--Shaferevich criterion (\cite[Theorems 1,2]{ST68}) that the Jacobian of $Y$ has good reduction over $K$. 
In \cite{BCKKLS} we proved that $Y$ has good reduction, as well.  Namely, in the list of cases in \cite[Theorems 2 and 3]{BCKKLS}, all possibilities for the stable reduction of a Ciani quartic in the case of geometrically bad reduction have at least one loop, i.e.~are not of compact type, which implies that the corresponding Jacobian has bad reduction. 
\end{proof}

Let $(Y, V)/K$ be a Ciani quartic.
Let $L/K$ be a minimal Galois extension such that $Y$ admits a standard model $Y_1$ over $L$. Recall from Lemma \ref{lem:fod_standard} that $\Gal(L/K)\subset S_3$.

\begin{proposition}\label{lem:stable_model}
The curve $Y$ admits stable reduction over an extension $M/L$ with $[M:L] \mid 4$.
\end{proposition}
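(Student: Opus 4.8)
The plan is to reduce the problem to a statement about a single genus-one quotient curve. Since $Y$ admits a standard model $Y_1$ over $L$, the three intermediate quotients $E_a = Y_1/\langle\sigma_a\rangle$, $E_b = Y_1/\langle\sigma_b\rangle$, $E_c = Y_1/\langle\sigma_c\rangle$ are genus-one curves defined over $L$ (cf. Notation \ref{not:sigma}); in fact each $E_i$ is realised as a double cover of the conic $X_1 = Y_1/V$, which is itself a $\mathbb{P}^1$ over $L$, so each $E_i$ carries an $L$-rational degree-$2$ map to $\mathbb{P}^1_L$ and hence is (at worst) a genus-one curve over $L$ with a rational divisor class of degree $2$. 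The key structural input is that $Y_1$ is recovered from the data of these quotients together with the Klein-four cover, so good reduction of $Y_1$ over a further extension is governed by simultaneous good reduction of the $E_i$.

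First I would make precise the assertion that $Y$ acquires stable (equivalently, since $g(Y)\geq 2$ and $Y$ has semistable reduction, good or semistable) reduction as soon as the cover $Y_1 \to X_1$ and the three double covers $E_i \to X_1$ all reduce well. Concretely, over $L$ one can write $Y_1$ via the system $x^2 = u,\ y^2 = v,\ z^2 = w$ over the conic \eqref{eq:conic}, where $(u:v:w)$ are the coordinates in which the conic is $Au^2+Bv^2+Cw^2+avw+buw+cuv = 0$; stable reduction of $Y_1$ is then controlled by the position of the six branch points of $Y_1 \to X_1 \cong \mathbb{P}^1$ on the special fibre, i.e. by whether these six points remain distinct after reduction over a suitable extension of $L$. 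These six points come in three pairs $D_a, D_b, D_c$, each pair being the branch locus of one $E_i \to \mathbb{P}^1$.

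Next, I would bound the extension needed to separate each pair. Each $E_i$ is an elliptic-curve torsor over $L$; after at most a quadratic twist it has a rational point, and as an elliptic curve it acquires good reduction over an extension of degree dividing the order of the relevant inertia action on its stable model. For $p > 3$ (which holds in our setting, $p \neq 2,3$, by the standing assumptions and Corollary \ref{cor:boundLK}), an elliptic curve over a local field with residue characteristic $> 3$ acquires good reduction over a tamely ramified extension whose degree divides $\mathrm{lcm}(1,2,3,4,6)$, but in the potentially-good case relevant here the three quotients are $2$-isogenous to products/quotients tied to a common quadratic structure, and the degree needed to separate each pair $D_i$ divides $4$. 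The point is that separating two points that are branch points of a genus-one double cover costs an extension of degree dividing $4$ (a quadratic base change to split the pair as a divisor, plus a further quadratic base change coming from the square-root functions $x,y,z$, or equivalently the $2$-torsion structure), and once all three pairs are separated over a common extension $M/L$, the six branch points are distinct on the special fibre and $Y_1$, hence $Y$, has good (in particular stable) reduction over $M$. Assembling these and taking a compositum, the total degree $[M:L]$ divides $4$.

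The main obstacle I anticipate is the bookkeeping that shows the three separate quadratic/quartic base changes for $D_a, D_b, D_c$ can be amalgamated into a single extension of degree dividing $4$ rather than $4^3$ or $64$ — one must exploit that the pairs are not independent: they are the three $2$-torsion-related double covers of the same conic, and the field splitting all three pairs as divisors is already contained in the compositum governing a single $E_i$ together with the coordinate square roots. I would handle this by working with the explicit model: the branch divisor of $Y_1 \to \mathbb{P}^1$ is cut out by a single degree-$6$ form whose three quadratic factors (over $L$) have a common behaviour, and checking that the extension needed is $L(\sqrt{\,\cdot\,})$ for the coordinate functions together with one quadratic twist, giving $[M:L] \mid 4$. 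In the cases of Proposition \ref{reconsquartic} where fewer variables appear (e.g. \eqref{eq:modelcaseb}, \eqref{eq:modelcasec}) the bound is only easier, so those serve as consistency checks.
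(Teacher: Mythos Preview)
Your approach has a genuine gap. You are trying to prove that $Y$ acquires \emph{good} reduction over an extension of degree dividing~$4$, and then conclude stable reduction as a consequence. But the proposition is only about \emph{stable} reduction, and in general $Y$ need not have potentially good reduction at all (see Lemmas~\ref{lem:badprimes} and~\ref{lem:potgoodhyper} and \cite[Theorem~2]{BCKKLS}). So the strategy ``separate the six branch points on the special fibre and deduce that $Y_1$ is smooth there'' cannot succeed in general: if two branch points collide modulo~$\nu$, no finite extension of $L$ will uncollide them, and the special fibre of the stable model will genuinely be singular. Field extensions do not move points apart on the special fibre; they only allow the stable model (possibly nodal) to be defined.

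The paper avoids this entirely by invoking Raynaud's criterion \cite[Proposition~4.7]{SGA7}: $Y_1$ acquires stable reduction over $L(J[4])$, where $J=\operatorname{Jac}(Y_1)$. The task is then purely to bound $[L(J[4]):L]$. Here your intuition about the $E_i$ becomes relevant: the isogeny $E_a\times E_b\times E_c\to J$ has kernel contained in the $2$-torsion, so $L(J[2])\subseteq L(\operatorname{A}[2])$ with $\operatorname{A}=\prod E_i$, and $L(\operatorname{A}[2])$ is obtained from $L$ by adjoining the roots of three explicit quadratics (your six branch points). The point you were circling around, and which resolves your ``$4$ versus $4^3$'' worry, is simply that $L$ is a local field with algebraically closed residue field of characteristic $\neq 2$, so there is a unique ramified quadratic extension and hence $[L(\operatorname{A}[2]):L]\leq 2$. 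The same local-field observation gives $[L(J[4]):L(J[2])]\leq 2$, since this extension is abelian of exponent~$2$. Combining these yields $[M:L]\mid 4$.

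In short: replace the attempt to build a smooth model by Raynaud's criterion, and replace the vague ``amalgamation'' argument by the structure of quadratic extensions of a complete local field with algebraically closed residue field.
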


\begin{proof}
Let $Y_1$ be a standard model of $Y$ over $L$. We use the notation from \eqref{eq:standard_model}.  Denote by $J$  the Jacobian of $Y_1$. By Raynaud's Criterion \cite[Proposition 4.7]{SGA7}, $Y_1$ and $J$ acquire stable reduction over the extension $L(J[m])$, where $m$ is any integer with $m\geq 3$ and $p \nmid m$. We choose $m=4$, and set $M=L(J[4])$. 

\textbf{Claim}: The degree of  $L(J[2])/L$ is at most $2$. 

As in Notation \ref{not:sigma}, we write $E_i=Y_{\overline{K}}/\langle \sigma_i\rangle$ for $i\in 
\{a,b,c\}.$ Then $g(E_i)=1$. Set $\operatorname{A}=E_a\times E_b\times E_c$. We obtain an isogeny $\iota:\operatorname{A}\to J$ over $\bar{K}.$  It follows from \cite[Section 4.1]{HLP} that the kernel of $\iota$ is a subgroup of $\operatorname{A}[2]$.   We show that $[L(\operatorname{A}[2]):L]\leq 2.$ The claim follows, as $\iota$ may be defined over $L(\operatorname{A}[2]).$ Moreover, $J[2]$ is the image of $\operatorname{A}[2]$ under $\iota$. Hence $L(J[2])\subset L(\operatorname{A}[2]).$   

The $2$-torsion points of the $E_i$, and hence of $\operatorname{A}$, are easily described explicitly. Namely, they are the points on $E_i$  above $D\setminus D_i$, where we use the notation from the proof of Lemma \ref{lem:fod_standard}. In other words, $L(\operatorname{A}[2])$ is the field over which all $6$ branch points of the cover $\varphi:Y_1\to Y_1/V=:X_1$ are rational. 
In terms of the coefficients of the standard model \eqref{eq:standard_model}, $L(\operatorname{A}[2])$ is the field obtained by adjoining the roots of the polynomials
\[
p_a(T) = T^2-2aT+4BC, \quad p_b(T) = T^2 -2bt +4AC, \quad p_c(T)=T^2-2cT+4AB.
\]
see for example \cite[Section 2.1]{BCKKLS}.
The claim follows, since $L(\operatorname{A}[2])/L$ is a totally ramified extension of  local fields of characteristic different from~$2$.

It is well known that $L(J[4])/L(J[2])$ is an abelian extension of exponent~$2$. Therefore, the degree of this extension is at most $2$, as well, and $[L(J[4]):L]$ divides $4$. The statement of the lemma follows.
\end{proof}

\begin{corollary}\label{cor:boundLK}
Every Ciani quartic $Y/K$ has good reduction over a cyclic extension $M/K$ of   degree  dividing either $8$ or $12$.
\end{corollary}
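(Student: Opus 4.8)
The plan is to combine Lemma \ref{lem:fod_standard} with Proposition \ref{lem:stable_model} and then invoke the structure of tame extensions of the local field $K$. First I would recall from Lemma \ref{lem:fod_standard} that $Y/K$ admits a standard model $Y_1$ over a Galois extension $L/K$ with $\Gal(L/K) \hookrightarrow S_3$, so $[L:K] \in \{1,2,3,6\}$. By Proposition \ref{lem:stable_model}, $Y$ acquires stable — hence, by potential good reduction when applicable, or in general after the stable reduction theorem — good reduction over an extension $M/L$ with $[M:L] \mid 4$. Since $p>3$, all these extensions are tamely ramified, so $[M:K]$ divides $12$ or $24$; I would then sharpen this to $8$ or $12$ and to cyclicity.

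The key observation is that the extension $M/K$ producing good reduction can be taken to be the \emph{minimal} one, which by the uniqueness in the stable reduction theorem (as noted in the paragraph preceding Proposition \ref{prop:fp}) is automatically Galois; I would replace $M$ by this minimal extension. Since the residue field $k$ of $K$ is algebraically closed, $M/K$ is totally ramified, and as $p \nmid [M:K]$ it is tame, hence cyclic. So $G:=\Gal(M/K)$ is cyclic, and it remains to bound its order. We have a short exact sequence (of the underlying groups) controlled by $\Gal(M/L)$, of order dividing $4$, and $\Gal(L/K) \le S_3$. The case $\Gal(L/K) = S_3$ would give $[M:K] \mid 24$, which is not cyclic-compatible in the naive bound, so this is exactly the point that needs care: a cyclic group of order $24$ has no quotient $S_3$, so the $S_3$ case must actually force a smaller $\Gal(M/L)$.

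Concretely, I would argue by cases on $[L:K]$. If $\Gal(L/K)$ is trivial or $C_2$, then $[M:K] \mid 8$. If $\Gal(L/K) = C_3$, then $[M:K] \mid 12$. The remaining case is $\Gal(L/K) = S_3$: here $G$ is cyclic with a surjection onto $\Gal(L/K) = S_3$, which is impossible since $S_3$ is non-abelian. Hence this case does not occur for the minimal $M$ — more precisely, the minimal stable-reduction field $L'$ over which a standard model exists must in this situation already have $\Gal(L'/K)$ abelian, i.e. the relevant sub-extension of $M/K$ has Galois group a cyclic subquotient of $S_3$, so $[M:K] \mid 12$ via the $C_3$-subextension together with the degree-$\le 2$ part, or $[M:K]\mid 8$. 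In all cases $[M:K]$ divides $8$ or $12$ and $M/K$ is cyclic.

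The main obstacle is handling the $S_3$ case cleanly: one must explain why, although a \emph{standard} model may only exist after an $S_3$-extension, the curve $Y$ itself (not the pair $(Y,V)$) nonetheless acquires good reduction over a cyclic extension — this uses that good reduction of $Y$ does not require $V$ to be rational, so one is free to pass to a smaller, cyclic subextension that still kills the reduction type, combined with the fact that the obstruction to descending the standard model is a $\Gal(L/K)$-torsor that becomes irrelevant once one only asks for good reduction of $Y$ rather than a standard model over the ground field. I would make this precise by tracking which subextension of $M/L$ is actually needed to trivialize the action on $\overline{Y}$ and observing it is defined by a polynomial of degree dividing $2$ over the abelian part.
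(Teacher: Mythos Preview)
Your overall strategy matches the paper's: combine Lemma~\ref{lem:fod_standard} and Proposition~\ref{lem:stable_model}, then use that tame totally ramified extensions of $K$ are cyclic (since the residue field is algebraically closed and $p>3$). However, your execution takes an unnecessary detour that introduces a genuine gap.

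The problem is your replacement of $M$ by the \emph{minimal} field of good reduction. Once you do this, you lose the containment $L\subset M$: there is no reason the minimal field over which $Y$ has good reduction should contain the field $L$ over which a standard model exists. Consequently the ``short exact sequence controlled by $\Gal(M/L)$'' and the surjection $G\twoheadrightarrow \Gal(L/K)$ you invoke in your case analysis need not exist, and your contradiction in the $S_3$ case does not go through as stated. Your final paragraph tries to patch this but wanders into vague territory (``tracking which subextension trivialises the action on $\overline{Y}$'') that is neither needed nor made precise.

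The paper avoids all of this by applying the cyclicity observation directly to $L/K$ rather than to $M/K$. Since $k$ is algebraically closed, $L/K$ is totally ramified; since $p\ge 5$ and $[L:K]$ divides $6$, it is tame; hence $\Gal(L/K)$ is cyclic. But $\Gal(L/K)$ is also a subgroup of $S_3$, so it must be one of $\{1,C_2,C_3\}$ and $[L:K]\in\{1,2,3\}$. Then $[M:K]=[M:L]\cdot[L:K]$ divides $8$ or $12$, and the same tame/totally-ramified argument shows $M/K$ itself is cyclic. No passage to a minimal extension, no $S_3$ ``obstacle'', and no need to separate the curve $Y$ from the pair $(Y,V)$: the $S_3$ case simply cannot occur over such a local field $K$. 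Your argument is easily repaired by making this observation about $L/K$ at the outset (or, equivalently, by keeping the original $M$ from Proposition~\ref{lem:stable_model} and noting it is already cyclic, without passing to the minimal one).
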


\begin{proof}
Let $M/K$ be as in the proof of Proposition \ref{lem:stable_model}.
The statement on the degree of $M/K$ follows immediately from Lemma \ref{lem:fod_standard} and Proposition \ref{lem:stable_model},  since the residue field $k$ of $K$ is algebraically closed and of characteristic $p\geq 5$.   Since $M/K$ is totally ramified and its degree  $p$ does not divide $[M:K]$ it follows that $M/K$ is Galois, and that its Galois group is cyclic.
\end{proof}

\section{Reduction of Ciani quartics}\label{sec:badred}

In this section, we study Ciani quartics with potentially good reduction to characteristic $p> 3$. We only treat the case where $Y(\underline{I})$ is non-special (Definition \ref{def:special}). It is possible to treat the special case analogously, but for the sake of brevity we decided not to include it. We treat the two cases of potentially good quartic and hyperelliptic reduction separately.

Fix a prime $p>3$ and let $K=\Q_p^{\nr}$.  We write $\nu$ for the valuation on $K$ with $\nu(p)=1$. The arguments also work if $K$ is a finite extension of $\Q_p^{\nr},$ though one has to adapt the statements a bit.

Let $\underline{I}=(I_3,I'_3,I''_3,I_6)\in  K^4$ be a set of invariants. We assume that the curve $Y(\underline{I})/\overline{K}$ is smooth and non-special. Moreover, we assume that the invariants $\underline{I}$ are normalised. As we are only interested in the case of potentially good reduction, this is no restriction by Remark \ref{normI6}.

\subsection{The case of potentially good quartic reduction}\label{sec:smoothmodelquartic}
In this section, we assume that $p>3$ is a prime at which $Y(\underline{I})$ has potentially good quartic reduction.  Recall from Lemma \ref{lem:badprimes} that this happens if and only if $p\nmid \Delta(\underline{I}_\nu)$. Since $\underline{I}$ is normalised, it follows that $\nu(I_3)=\nu(I''_3)=\nu(I_6)=0.$ We set $v:=\nu(I'_3).$ This is the only coefficient of $\underline{I}$ whose valuation may be positive in our situation. 
 
In Proposition \ref{reconsquartic} we constructed an explicit field extension $L=L(\underline{I})/K$ together with a standard model $Y_1(\underline{I})$ over $L$.
The discriminant $\Delta(Y_1(\underline{I}))$ of the model $Y_1(\underline{I})$ differs from $\Delta(\underline{I})$ by a factor. In Proposition \ref{prop:reconsK} we constructed an explicit model $Y_0(\underline{I})$ over $K$ of $Y_1(\underline{I})$. The discriminant $\Delta(Y_0(\underline{I}))$ differs from $\Delta(Y_1(\underline{I}))$ by another factor. Therefore, $Y_0(\underline{I})$ does not need to have good reduction over $K$.  In the following propositions, we analyse whether $Y_0(\underline{I})$ or any of its twists have good reduction over $K$.

\begin{proposition} \label{prop:smoothmodel_new} Let $\underline{I}=(I_3,I'_3,I''_3,I_6)\in  K^4$ be a set of normalised invariants. 
Assume that $\nu(\Delta(\underline{I}))=\nu(Q)=0$ and that $Y(\underline{I})$ is non-special.  There exists a standard Ciani model $Y_2/K$ with good reduction at $p$. This model satisfies $f_p(Y_2)=0.$ For the other twists $Y'_2$ of $Y_2$, one has $f_p(Y'_2)=4$.
\end{proposition}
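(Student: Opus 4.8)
The plan is to reduce everything to the reconstruction models of Proposition~\ref{reconsquartic} and the explicit twist description of Lemma~\ref{lem:twists}, and then to run a discriminant-valuation bookkeeping. First I would observe that since $\nu(Q)=0$, the polynomial $\mathcal{P}$ has three distinct roots modulo $\nu$; since $K=\Q_p^{\nr}$ has algebraically closed residue field, a Hensel/lifting argument shows $\mathcal{P}$ splits completely over $K$, so $L=L(\underline{I})=K$ and moreover $P=8I_3+I_3'-I_3''$ must have valuation $0$ (otherwise $0$ would be a root of $\mathcal{P}$ mod $\nu$, contradicting that $0$ cannot be both a root and part of a triple of distinct residues once one checks the residues of $S_1,S_2,S_3$; alternatively $P\equiv 0$ forces $\mathcal{A}\mathcal{B}\mathcal{C}\equiv 0$ and hence a repeated-or-zero root pattern incompatible with $\nu(Q)=0$ unless all three are nonzero — here one uses $\nu(I_3)=\nu(I_3'')=\nu(I_6)=0$ from normality). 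So we are in case~(a) of Proposition~\ref{reconsquartic} with $[L:K]=1$, and the model
\[
Y_1(\underline{I}):\ \mathcal{A}x^4+\mathcal{B}y^4+\mathcal{C}z^4+P(x^2y^2+y^2z^2+z^2x^2)=0
\]
is already defined over $K$, with $\Delta(Y_1(\underline{I}))=\Delta(\underline{I})P^{18}$, whose valuation is $0$ since $\nu(\Delta(\underline{I}))=\nu(P)=0$. Taking $Y_2:=Y_1(\underline{I})$ (renamed; one may use the $Y_0(\underline{I})$ of Proposition~\ref{prop:reconsK} with $\phi=\mathrm{Id}$, $\lambda=1$), this is an integral quartic over $\mathcal{O}_K$ with unit discriminant, hence has good quartic reduction at $p$, and by Proposition~\ref{prop:fp} (applicable by Corollary~\ref{cor:boundLK}) $f_p(Y_2)=2g-2g=0$.

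Next I would handle the twists. By Lemma~\ref{lem:twists}(a), since $[L:K]=1$ there are exactly four $K$-twists, represented by the diagonal matrices $\mathrm{diag}(1,1,1)$, $\mathrm{diag}(\pi,1,1)$, $\mathrm{diag}(1,\pi,1)$, $\mathrm{diag}(1,1,\pi)$ with $\nu(\pi)=1/2$. For a nontrivial twist $Y_2'$, say via $M=\mathrm{diag}(\pi,1,1)$, Lemma~\ref{trans}(ii) gives $\Delta(\,^MF)=(\det M)^{36}\Delta(F)=\pi^{36}\Delta(F)$, so $\nu(\Delta(Y_2'))=18$; more to the point, $Y_2'$ acquires good reduction only over $K(\pi)$, a ramified quadratic extension, and not over $K$. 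The point is then to compute $g(\overline{Z})$ where $\overline{Z}=\overline{Y}/G$, $G=\Gal(M'/K)\cong C_2$ with $M'=K(\pi)$. Here one uses that the action of the generator of $G$ on the good reduction $\overline{Y_2}$ (a smooth quartic over $k$) is induced by $\mathrm{diag}(-1,1,1)$ — i.e., it is precisely one of the nontrivial elements $\sigma_a$ of the Ciani group $V$ acting on the standard model, because twisting by $\mathrm{diag}(\pi,1,1)$ and descending introduces exactly this involution in the reduction. Since $Y_2/\langle\sigma_a\rangle=E_a$ has genus $1$, we get $g(\overline{Z})=1$, whence $f_p(Y_2')=2\cdot 3-2\cdot 1=4$ by Proposition~\ref{prop:fp}. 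The same computation applies verbatim to the other two twists, using $\sigma_b$, $\sigma_c$ respectively, all of which give quotient genus $1$.

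The main obstacle will be making rigorous the claim that the Galois action of $G=\Gal(M'/K)$ on the good-reduction fiber $\overline{Y_2'}$ is exactly (conjugate to) $\sigma_a\in V$, rather than merely \emph{some} nontrivial automorphism. The cleanest route is: over $M'=K(\pi)$ the twist $Y_2'$ becomes $M'$-isomorphic to $Y_2$ via $\mathrm{diag}(\pi,1,1)$, and the descent datum identifying $Y_2'$ as a $K$-form is the cocycle $\sigma\mapsto \phi\,{}^\sigma\!\phi^{-1}$ from Lemma~\ref{lem:twists}, whose value on the generator of $\Gal(M'/K)$ is $\mathrm{diag}(\pi/\!\,^\sigma\pi,1,1)=\mathrm{diag}(-1,1,1)=\sigma_a$; by the functoriality of Néron/stable models this cocycle value is precisely the automorphism through which $G$ acts on $\overline{Y_2'}\cong\overline{Y_2}$. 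Then $\overline{Z}=\overline{Y_2}/\langle\sigma_a\rangle$ is the reduction of $E_a$, of genus $1$ (note $p\neq 2,3$ ensures $E_a$ still has good — indeed the same — reduction, so no degeneration of the quotient genus occurs), and Proposition~\ref{prop:fp} finishes it. I would also double check that $\sigma_a$ acts nontrivially on $\overline{Y_2}$, which holds because $\Aut_{\overline{K}}(Y)=V$ by hypothesis and reduction is injective on prime-to-$p$ automorphism groups in the good-reduction case.
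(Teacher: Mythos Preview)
Your overall strategy---use $\nu(Q)=0$ to split $\mathcal{P}$ over $K$, take the reconstructed model as $Y_2$, then compute the twist conductors via Lemma~\ref{lem:twists} and Proposition~\ref{prop:fp}---matches the paper's approach, and your treatment of the twists is correct and in fact more detailed than what the paper writes out. However, there is a genuine gap in the first half.

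You claim that $\nu(Q)=0$ together with $\nu(I_3)=\nu(I_3'')=\nu(I_6)=0$ forces $\nu(P)=0$, and hence that $Y_1(\underline{I})$ already has good reduction. This is false. If $\nu(P)>0$ then indeed $S_3=I_3P^2\equiv 0$, so one root of $\mathcal{P}$ reduces to $0$; but nothing prevents the other two residues from being distinct nonzero elements, giving $\nu(Q)=0$. Concretely, $\overline{\mathcal{P}}(T)=T(T^2-\overline{S_1}T+\overline{S_2})$ has three distinct roots whenever $\overline{S_2}\neq 0$ and $\overline{S_1}^2\neq 4\overline{S_2}$, and there is no obstruction to this. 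In that situation $\Delta(Y_1(\underline{I}))=\Delta(\underline{I})P^{18}$ has \emph{positive} valuation, so $Y_1(\underline{I})$ does \emph{not} have good reduction and cannot serve as $Y_2$. The paper fixes this by observing that exactly one root, say $\mathcal{A}$, has positive valuation, that $2\nu(P)=\nu(\mathcal{A})$ (from $S_3=I_3P^2=\mathcal{A}\mathcal{B}\mathcal{C}$) so $\nu(\mathcal{A})$ is even, and then rescaling $x\mapsto p^{-\nu(\mathcal{A})/4}x$ to obtain a new standard model $Y_2/K$ with unit discriminant. You also silently exclude the case $P=0$ (case~(b) of Proposition~\ref{reconsquartic}), which is likewise compatible with $\nu(Q)=0$ and must be treated; there $\Delta(Y_1(\underline{I}))=\Delta(\underline{I})S_2^{18}$ and $\nu(Q)=0$ directly gives $\nu(S_2)=0$, so no rescaling is needed. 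Once you patch these two subcases, the rest of your argument goes through.
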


\begin{proof}
The assumption that $\nu(Q)=0$ implies that the  polynomial $\mathcal{P}$ from Proposition \ref{reconsquartic} splits in $K$. In particular, the standard model $Y_1(\underline{I})$ of $Y(\underline{I})$ from that proposition may be defined over $L=K$. 

If $P\neq0$, we are in case (a) of Proposition \ref{reconsquartic}, and the discriminant of this standard model is  $\Delta(Y_1(\underline{I}))=\Delta(\underline{I})P^{18}$. The assumption that $Y_1(\underline{I})$ has potentially good reduction implies that $p\nmid \Delta(\underline{I})$. If $\nu(P)=0$ it follows that $Y_1(\underline{I})$ already has good reduction. We take $Y_2(\underline{I})=Y_1(\underline{I})$ in this case. 
If $\nu(P)>0$, the assumption that $\nu(Q)=0$ implies that only one of the roots $\mathcal{A}, \mathcal{B}, \mathcal{C}$ of $\mathcal{P}$ defined in Equation \ref{eq:coeffP} has positive valuation. Let us say that it is $\mathcal{A}$. Moreover, we have from that equation that
\begin{equation}\label{eq:valP}
2\nu(P)=\nu(\mathcal{A})+\nu(\mathcal{B})+\nu(\mathcal{C}).
\end{equation}

So $\nu(\mathcal{A})$ is even, and dividing $x$ by $p^{\nu(\mathcal{A})/4}$ we obtain another standard model $Y_2(\underline{I})$, still defined over $K$, whose discriminant satisfies
$$
    \nu(\Delta(Y_2))=\nu(\Delta(Y_1(\underline{I}))) + 36(\nu(\cA))/4
    =
\nu(\Delta(Y_1(\underline{I})))-18\nu(P)=\nu(\Delta(\underline{I})).
$$
Hence, the model $Y_2$ has good reduction over $K$.

If $P=0$, i.e.~we are in case (b) of Proposition \ref{reconsquartic}, we argue similarly: we can make $\mathcal{A}=0$ and then $\Delta(Y_1(\underline{I}))=\Delta(\underline{I})S_2^{18}$ with $S_2=\mathcal{B}\mathcal{C}$. Moreover, $Q=\mathcal{B}^2\mathcal{C}^2(\mathcal{B}-\mathcal{C})^2$. The assumption $\nu(Q)=0$ implies then that $Y_1(\underline{I})$ already has good reduction. In this case, we may therefore choose $Y_2(\underline{I})=Y_1(\underline{I})$.

We do not need to consider case (c) of Proposition \ref{reconsquartic}, since $Y(\underline{I})$ is special in this case.

The claim for the twists follows from Lemma \ref{lem:twists} (if $[L:K]=1$) and Proposition \ref{prop:fp}; see the proof of the Proposition \ref{prop:val_equal} for more details on the computation of the conductor in a more complicated case.
\end{proof}

\begin{remark} Another way of proving that the other twists have positive conductor, i.e. that they are not $K$-isomorphic to a model with good reduction, is via the Elsenhans--Stoll minimal model reduction algorithm in \cite{Elsenhans}. This approach is computationally more expensive and does not provide the value of the conductor of the twists that do not have good reduction.
\end{remark}

We now study the case in which $\nu(Q)>0$.

\begin{proposition}\label{prop:val_equal} We use the notation from Proposition \ref{reconsquartic}.
Let $p\nmid \Delta(\underline{I})$ be a prime of potentially good quartic reduction.
Assume that $\nu(Q)>0$. 
\begin{enumerate}[(a)]
\item  The $K$-model $Y_0(\underline{I})$ of $Y(\underline{I})$ given in Proposition \ref{prop:reconsK} has good reduction over an extension of $L(\underline{I})$ degree at most $2$.  The conductor is $f_p(Y_0(\underline{I}))=~4$.
\item  If $Y(\underline{I})$ is non-special, all $K$-models $Y'_0$ of $Y(\underline{I})$ have $f_p(Y'_0)= 4$.
\end{enumerate}
\end{proposition}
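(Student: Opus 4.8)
The plan is to reduce everything to the model $Y_1(\underline{I})$ over $L=L(\underline{I})$, exploit the hypothesis $\nu(Q)>0$ to control the valuations of the roots $\cA,\cB,\cC$ of $\mathcal{P}$, and then apply the conductor formula of Proposition \ref{prop:fp} together with the twist classification of Lemma \ref{lem:twists}.

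For part (a): since $p\nmid\Delta(\underline{I})$ we are in the situation $\nu(I_3)=\nu(I_3'')=\nu(I_6)=0$, so $\nu(\Delta(\underline{I}))=0$. The condition $\nu(Q)>0$ together with $Y(\underline{I})$ being non-special (so $Q\neq 0$, by Lemma \ref{lem:Q}) means that $\mathcal{P}$ has two distinct roots that are congruent mod $\nu$ but not a triple congruence; hence $[L:K]\le 2$. First I would treat case (a) of Proposition \ref{reconsquartic}, $P\neq 0$: here $\Delta(Y_1(\underline{I}))=\Delta(\underline{I})P^{18}$ and $2\nu(P)=\nu(\cA)+\nu(\cB)+\nu(\cC)$. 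Writing the discriminant $Q$ of $\mathcal{P}$ as the product of the squares of root differences, $\nu(Q)>0$ forces (after relabelling) $\cB\equiv\cC\pmod\nu$ while $\cA\not\equiv\cB$; combined with $\nu(I_3)=\nu(\cA\cB\cC/P^2)=0$ (so $\nu(\cA)+\nu(\cB)+\nu(\cC)=2\nu(P)$) and $\nu(I_6)=0$ one deduces that all three $\nu(\cA),\nu(\cB),\nu(\cC)$, and $\nu(P)$, are $0$. Thus $Y_1(\underline{I})$ itself has good reduction over $L$, which is an extension of $K$ of degree $\le 2$ — this is the "extension of degree at most $2$" in the statement. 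The model $Y_0(\underline{I})$ is the descent of $Y_1(\underline{I})$ via the matrix $\phi$ in Table \ref{tab:reconsK} with $[L:K]=2$. Case (b), $P=0$ with a simple root at $0$, is handled analogously using $\Delta(Y_1(\underline{I}))=\Delta(\underline{I})S_2^{18}$, $S_2=\cB\cC$, and $Q=\cB^2\cC^2(\cB-\cC)^2$: $\nu(Q)>0$ now forces $\cB\equiv\cC$ with $\nu(\cB)=\nu(\cC)=0$, again good reduction over the quadratic $L$. Case (c) is excluded by non-specialness.

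To compute $f_p(Y_0(\underline{I}))$ I would run the recipe of Proposition \ref{prop:fp}: the minimal extension $M/K$ over which $Y_0(\underline{I})$ acquires good reduction is $M=L$, which is a tamely ramified (quadratic) extension, with $G=\Gal(L/K)=\langle\tau\rangle\simeq C_2$. The generator $\tau$ acts on $\overline{Y}:=\overline{Y_1(\underline{I})}$ by the permutation $(x:y:z)\mapsto(x:z:y)$ coming from the interchange $\cB\leftrightarrow\cC$ (up to the coordinate change in $\phi$), i.e.\ as the element $\sigma_d$-type involution swapping two variables. So $\overline{Z}=\overline{Y}/G$ is the quotient of the good quartic reduction by this involution. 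One computes $g(\overline{Z})$: the involution $(x:y:z)\mapsto(x:z:y)$ on a smooth plane quartic of the form $\cA x^4+\cB y^4+\cC z^4+P(x^2y^2+y^2z^2+z^2x^2)=0$ with $\cB=\cC$ (mod $\nu$) has a fixed locus that one counts via Riemann–Hurwitz; the quotient genus comes out to $1$, whence $f_p(Y_0(\underline{I}))=2\cdot 3-2\cdot 1=4$. (This genus computation, or equivalently citing the list of automorphisms/quotients of plane quartics, is the one genuinely computational point.)

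For part (b): when $Y(\underline{I})$ is non-special, Lemma \ref{lem:twists} gives exactly two twists when $[L:K]=2$, namely $Y_0(\underline{I})$ and the twist $Y_0'(\underline{I})$ via the matrix with $\pi_1$ ($\nu(\pi_1)=1/4$) and $\zeta_4$. For $Y_0(\underline{I})$ we just showed $f_p=4$. For the remaining twist $Y_0'(\underline{I})$, its minimal good-reduction extension $M'/K$ is the degree-$4$ tame extension built from $\pi_1$ (so $\Gal(M'/K)$ is cyclic of order $4$, acting on $\overline{Y}$ through the corresponding diagonal scaling composed with the coordinate swap), and a second Riemann–Hurwitz / fixed-point count of this $C_4$-action on $\overline{Y}$ gives $g(\overline{Y}/G')=1$ again, hence $f_p(Y_0'(\underline{I}))=4$; alternatively, one observes $f_p$ cannot be $0$ by Corollary \ref{cor:conductor_good} since $Y_0'(\underline{I})$ does not have good reduction (its discriminant valuation over $K$ is forced positive by the $\pi_1,\zeta_4$ scaling), and $f_p$ is bounded by $2g=6$ and must be even for a quartic acquiring good reduction tamely, and the only value consistent with the $C_4$ quotient having the same inertial reduction as the $C_2$ case is $4$. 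The main obstacle is the bookkeeping of exactly which permutation/scaling automorphism of the reduced quartic each Galois generator induces, and carrying out the two fixed-point counts cleanly; once those genera are pinned down to $1$, the conductor values follow immediately from \eqref{eq:fp}.
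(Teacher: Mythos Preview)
Your overall strategy---pass to $Y_1(\underline{I})$ over $L$, compute the inertia action on the reduction, and apply Proposition~\ref{prop:fp}---matches the paper's, but there are two genuine gaps in the execution.

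First, the claim that $\nu(Q)>0$ with $Q\neq 0$ forces ``two roots congruent mod $\nu$ but not a triple congruence'', hence $[L:K]\le 2$, is false. The condition $\nu(Q)>0$ only says that $\overline{\mathcal{P}}$ has a repeated root; it may perfectly well be a triple root, and then $\mathcal{P}$ can be irreducible over $K$ with $[L:K]=3$. The paper treats this case explicitly at the end of its proof: the generator of $\Gal(L/K)$ then acts on $\overline{Y}$ by cyclically permuting the three coordinates, and one again computes $g(\overline{Y}/\langle\sigma\rangle)=1$, whence $f_p=4$. Without this case your proof of both (a) and (b) is incomplete (and in (b) you also silently skip $[L:K]=1$, where there are four twists to check).

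Second, even in the $[L:K]=2$ case your deduction that $\nu(\cA)=\nu(\cB)=\nu(\cC)=\nu(P)=0$, so that $Y_1(\underline{I})$ already has good reduction over $L$, is not justified by the relations you cite. For instance $\nu(\cA)=2\nu(P)>0$ with $\nu(\cB)=\nu(\cC)=0$ is consistent with $\nu(\cA)+\nu(\cB)+\nu(\cC)=2\nu(P)$ and with $\cA\not\equiv\cB$; the appeal to $\nu(I_6)=0$ does not close this off. The paper does not rely on any such simplification: it introduces a diagonal scaling $\psi=\mathrm{diag}(p^{\nu(\cA)/4},p^{\nu(\cB)/4},p^{\nu(\cC)/4})$ to produce a new model $Y_2(\underline{I})$ with good reduction, a priori over $L_8=K(p^{1/8})$ and then shown to descend to $L_4$. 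Note the statement says good reduction over an extension of $L$ \emph{of degree at most~$2$}, not over $L$ itself---you have conflated the two. The Galois action on $\overline{Y_2}$ is then computed via the composite $\psi\circ\phi$ and comes out as $(x_2:y_2:z_2)\mapsto(x_2:\alpha z_2:\alpha y_2)$ with $\alpha^4=1$, so possibly of order~$4$, not just the order-$2$ swap you describe. Your Riemann--Hurwitz endgame is the right idea, but it must be run for this possibly-order-$4$ action and for the cyclic $3$-permutation as well.
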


\begin{proof} The proof proceeds in the following steps. In step 1 we define a standard model $Y_2(\underline{I})$ over an extension $L_8$ of $L$ with good reduction. 
This model is defined by giving an explicit isomorphism $\psi:\,Y_1(\underline{I})\rightarrow Y_2(\underline{I})$.  Composing $\psi$ with the isomorphism $\phi$ from Proposition \ref{prop:reconsK}, we get an isomorphism from $Y_0(\underline{I})$ to $Y_2(\underline{I})$ defined over $L_2$. In step 2 we compute the action of $\Gal(L_8/K)$ on the special fiber $\overline{Y}$ of $Y_2(\underline{I})$ and compute the conductor exponent $f_p(Y_0(\underline{I}))$ via Proposition \ref{prop:fp}. In step 3 we proceed in the same way with the other $K$-twists of $Y_0(\underline{I})$ described in Lemma \ref{lem:twists}.

\bigskip
 We assume we are in case (a) of Proposition \ref{reconsquartic} and that $[L:K]=2$.  The other cases are easier, so we discuss these at the end of the proof.  By assumption, $P\neq 0$. As in the proof of Proposition \ref{prop:smoothmodel_new}, the roots $\mathcal{A}, \mathcal{B}, \mathcal{C}$ of the polynomial $\mathcal{P}$ from Proposition \ref{reconsquartic} satisfy \eqref{eq:valP}: 
\begin{equation*}
2\nu(P)=\nu(\mathcal{A})+\nu(\mathcal{B})+\nu(\mathcal{C}).
\end{equation*}

\emph{Step 1}: Without loss of generality, we may assume  $\cA\in K$ and $\cB$, $\cC$ conjugated in $L\setminus K$. In particular, $\nu(\cB)=\nu(\cC)$. Let $L_8:=K(p^{1/8})/K$ be the ramified  extension of degree $8$.
We consider the isomorphism:
\begin{equation}\label{eq:psi}
\psi=\begin{pmatrix}p^{\nu(\cA)/4} & 0 & 0\\ 0 & p^{\nu(\cB)/4} & 0 \\ 0 & 0 & p^{\nu(\cC)/4} \end{pmatrix}:\, Y_1(\underline{I})\rightarrow Y_2(\underline{I}),
\end{equation}
where $Y_2(\underline{I})/L_8$ is a model of $Y_1(\underline{I})$ defined by the equation
\begin{equation}\label{eq:modelY'}
\frac{\mathcal{A}}{p^{\nu(\cA)}}x_2^4+\frac{\mathcal{B}}{p^{\nu(\cB)}}y_2^4+\frac{\mathcal{C}}{p^{\nu(\cC)}}z_2^4+\frac{P}{p^{(\nu(\cA)+\nu(\cB))/2}}\left(x_2^2y_2^2+ x_2^2z_2^2\right)+ \frac{P}{p^{\nu(\cB)}} y_2^2z_2^2=0.\end{equation} 
It follows from \eqref{eq:valP} that the coefficients of this equation are integral. 
A direct calculation using Lemma \ref{trans} and the expression for $\Delta(Y_1(\underline{I}))$ in Proposition \ref{reconsquartic}.(a) yields
\begin{equation}\label{eq:Delta2}\begin{split}
    \nu(\Delta(Y_2(\underline{I})))&=\nu(\Delta(Y_1(\underline{I}))) - 36(\nu(\cA)+\nu(\cB)+\nu(\cC))/4\\
    &=
\nu(\Delta(Y_1(\underline{I})))-18\nu(P)=\nu(\Delta(\underline{I})).
\end{split}
\end{equation}
It follows that the model $Y_2(\underline{I})$  has good reduction over $L_8$.
In fact, the matrix defining $\psi$ in \eqref{eq:psi} should be considered as element of $\PGL_3(L_8)$ and can therefore be divided by 
\[
p^{\min(\nu(\cA)/4, \nu(\cB)/4)}.
\]
Checking the possibilities for the minimum separately,  one sees that $\psi$ is defined over the subextension $L_4$ with $[L_4:K]=4.$  It follows that the model $Y_2(\underline{I})$ has good reduction over $L_4$.

    \bigskip
    \emph{Step 2}: To compute the conductor of $Y_0(\underline{I})$ via Proposition \ref{prop:fp}, we need to determine the action of the Galois group $\Gal(L_8/K)$ on the reduction $\overline{Y}$ of $Y_2(\underline{I})$.  We choose a generator $\sigma$ of $\Gal(L_8/K)$ with $\sigma(p^{1/8})=\zeta_8 p^{1/8}$ for a fixed primitive $8$-th root of unity $\zeta_8\in K$.

Composing $\psi$ with the isomorphism $\phi:Y_0(\underline{I})\to Y_1(\underline{I})$ from Proposition \ref{prop:reconsK}, we obtain an isomorphism $\psi\circ\phi:\,Y_0(\underline{I})
\to Y_2(\underline{I})$ given by
\begin{equation}\label{eq:descent}
\begin{pmatrix} x_2\\y_2\\z_2\end{pmatrix}=\begin{pmatrix}  p^{\nu(\cA)/4}&0&0\\ 0&p^{\nu(\cB)/4}&p^{\nu(\cB)/4}\cB\\0&p^{\nu(\cB)/4}&p^{\nu(\cB)/4}\cC\end{pmatrix}\begin{pmatrix} x_0\\y_0\\z_0\end{pmatrix}.
\end{equation}

Recall that $Y_0(\underline{I})$ is defined over $K$. Using that the functions $x_0, y_0, z_0$ are defined over $K$, we have
\begin{equation}\label{eq:inertia}
\begin{split}
\sigma^\ast(x_2,y_2,z_2)&=
\left( \sigma(p^{\nu(\cA)/4}) x_0,  \sigma(p^{\nu(\cB)/4}) \left(y_0+\sigma(\cB)z_0\right), \sigma(p^{\nu(\cB)/4}) \left(y_0+\sigma(\cC)z_0\right)\right)\\
&= (\zeta_8^{2\nu(\cA)}x_2, \zeta_8^{2\nu(\cB)}z_2, \zeta_8^{2\nu(\cB)} y_2).
\end{split}
\end{equation}
More details on this calculation in a slightly different situation can be found in \cite[Section 5]{BW}. 
Note that $\nu(\cB)=\nu(\cC)\in (1/2)\mathbb{Z}$, so we use the integer $2\nu(\cB)$ in \eqref{eq:inertia}.

It follows that  the automorphism of the stable reduction $\overline{Y}$ induced by $\sigma$ is
\[
\sigma(x_2: y_2:z_2)=(x_2:\alpha z_2:\alpha y_2) \qquad \text{ for some $\alpha\in k$ with $\alpha^4=1$.}
\]
 We compute that $g(\overline{Y}/\langle \sigma\rangle)=1$, independent of whether the order of $\sigma$ is two or four. By Proposition \ref{prop:fp} we find $f_p(Y_0(\underline{I}))=4.$

\emph{Step 3:} We proceed similarly with the quadratic twist $Y'_0$ of $Y_0(\underline{I})$ as in Lemma \ref{lem:twists}, and conclude that $f_p(Y'_0)=4$, as well. This proves the proposition in  case (a) of Proposition \ref{reconsquartic} if $[L:K]=2$.  

\bigskip
We omit the proof for case (b) of Proposition \ref{reconsquartic} and $[L:K]=2$, since it is very similar, except that additionally we have  $\cA=0.$ 

It remains to consider the case $[L:K]=3$. We are automatically in case (a) of Proposition \ref{reconsquartic}. The argument is again similar, but in this case $\sigma$ acts on $\overline{Y}$ by cyclically permuting the variables  $x_2, y_2, z_2$. One computes that $g(\overline{Y}/\langle \sigma\rangle)=1$ in this case, as well. This finishes the proof of the proposition.
\end{proof}

\begin{remark} \label{rem:aut}
\begin{enumerate}[(a)] 
\item In the case that $L/K$ is ramified with ramification index $e_p=2$ in Proposition \ref{prop:val_equal}, we see that $\sigma\in \Gal(L/K)$ acts on the reduction $\overline{Y}$ as an automorphism  of order $2$ or $4$ that is not contained in the fixed Ciani subgroup. This is consistent with the statement of Lemma \ref{lem:Q}, as $\nu(Q)>0$ implies that the reduction $\overline{Y}$ is special, and hence that $\Aut_k(\overline{Y})$ contains at least the dihedral group $D_4$ of order $8$ as subgroup.

In the case that $L/K$ is ramified with ramification index $e_p=3$, the automorphism group $\Aut_{k}(\overline{Y})$ contains an element of order $3$. It follows that $S_4\subset \Aut_{k}(\overline{Y})$ in this case. 
\item In the proof of Proposition \ref{prop:val_equal}, we showed that $Y_1(\underline{I})$ acquires stable reduction over an extension of $K$ of degree  $4$ by looking at the equation of $Y_2(\underline{I}).$ One can also deduce this from the fact that the action of $\Gal(L_8/K)\simeq C_8$ on the reduction $\overline{Y}$ acts via a group of order at most $4$.
\end{enumerate}
\end{remark}

\begin{example}\label{exa:fp_small}
    We finish this section with a concrete example. We choose $I_3=I_3''=I_6=1, I_3'=-6$. Using the notation from Proposition \ref{reconsquartic}, we find $P=1$ and $\mathcal{P}=T^3-6T^2+8T-1$. Write $L/\Q$ for the splitting field of $\mathcal{P}$, and $\mathcal{A}, \mathcal{B}, \mathcal{C}$ for its roots.  The discriminant of $\mathcal{P}$ is $Q=229$, which is prime. Hence, the Galois group $\Gal(L/\Q)$ is $S_3$.  The standard model $Y_1/L$ of $Y(\underline{I})$ from
Proposition \ref{reconsquartic}  is given by
\begin{equation}\label{eq:exa}
Y_1:\; \mathcal{A}x^4+\mathcal{B}y^4+\mathcal{C}z^4+x^2y^2+y^2z^2+x^2z^2=0.
\end{equation}

It has Ciani invariants $\underline{I}(Y_1)=(I_3,I_3',I_3'',I_6)\in \Q^4.$ Since $Q\neq 0$, the curve is non-special, see Lemma \ref{lem:Q}. The discriminant of this model is $\Delta(Y_1)=2^{20}$. 

Proposition \ref{prop:smoothmodel_new} implies that for all odd primes $p\neq 229$, the curve $Y$ is defined over $\Q_p^{\nr}$ and has good reduction. Hence, $f_p(Y_1)=0$ for these primes.  For $p=229$ Proposition \ref{prop:val_equal} implies that all  models $Y_0$ of $Y(\underline{I})$ defined over $\Q_p^{\nr}$ have  conductor exponent $f_{229}(Y_0)=4$.

Recall that $229\nmid \Delta(\underline{I})=2^{20}.$ This illustrates that for primes with $p\nmid \Delta(\underline{I})$, there need not exist a model of $Y(\underline{I})$ over $\Q(\underline{I})$ with good reduction at $p$. In this example, the standard model $Y_1/L$ from \eqref{eq:exa} satisfies $\Delta(Y_1)=\Delta(\underline{{I}})$. Therefore, $Y_1$ has good reduction to any prime of $L$ above $p=229$. However, there is no model of $Y(\underline{I})$  over $\Q$ with good reduction to characteristic $p=229$. 
\end{example}

\begin{remark} The invariants in Example \ref{exa:fp_small} were chosen to ensure that  the  discriminant $\Delta(\underline{I})=2^{20}I_3(I_3'')^4I_6^2$  is a power of $2$.  In other words, the curve $Y(\underline{I})$ has potentially good reduction at all odd primes. However, we have seen that  the  relatively large prime $p=229$ divides the conductor of every $\Q$-model of $Y(\underline{I})$.
Proposition \ref{prop:val_equal} gives a geometric interpretation: the automorphism group of the reduction $\overline{Y}$ of $Y(\underline{I})$ to characteristic $p=229$ is strictly larger than the Ciani subgroup $V$. In the example, $p=229$ is the only odd prime for which this happens.
\end{remark}

\subsection{The case of potentially good hyper\-elliptic reduction} \label{sec:hyper_cond}

In this section, we prove results similar to those of the previous section, in the case of potentially good hyperelliptic reduction. As in Section \ref{sec:smoothmodelquartic}, we fix a prime $p>3$ and set $K=\Q_p^{\nr}$.  Let $\underline{I}=(I_3, I'_3, I''_3, I_6)\in K^4$ be a set of Ciani invariants.  We assume that $Y(\underline{I})$ has potentially good hyperelliptic reduction. We can then assume the invariants to be  normalised because of Remark \ref{normI6}.   Hence, Lemma \ref{lem:potgoodhyper} implies that there exists an integer $e>0$ such that  the Ciani invariants $\underline{I}$ satisfy: 
\begin{equation}\label{eq:valinvariants_hyper}\nu(I_3)=0,\quad
\nu(I_3')\geq e,\quad \nu(I_3'')=2e, \text{  and } \nu(I_6)=3e.  
\end{equation}

Namely, $e=\nu(I_6)/\nu(I''_3).$
We note that $\nu(P)=0$ and that the discriminant $Q$ of the polynomial $\mathcal{P}$ defined in Section \ref{sec:recons} satisfies $\nu(Q)\geq 6e$.
Let $L/K$ be the splitting field of the polynomial $\mathcal{P}$. As in Section \ref{sec:recons}, we write $\cA, \cB, \cC$ for the roots of $\mathcal{P}$.

The fact that $\nu(P)=0$ implies that we are in the situation of Proposition \ref{reconsquartic}(a), so there exists a a standard $L$-model 
\begin{equation}Y_1:\; Ax^4+By^4+Cz^4+ay^2z^2+bx^2z^2+cx^2y^2=0\end{equation} 
with normalised invariants and $\underline{I}(Y_1)=\underline{I}$. Remark \ref{rmk:roots}(a) implies that  $\mathcal{A}=Aa^2$, $\mathcal{B}=Bb^2$ and $\mathcal{C}=Cc^2$.

Define, with the notation in  Subsection \ref{sec:recons},
\begin{multline}\label{def:invR}R=(\mathcal{A}-\mathcal{B})(\mathcal{A}-\mathcal{C})+(\mathcal{B}-\mathcal{A})(\mathcal{B}-\mathcal{C})+(\mathcal{C}-\mathcal{A})(\mathcal{C}-\mathcal{B})=\\ \mathcal{A}^2+\mathcal{B}^2+\mathcal{C}^2-\mathcal{B}\mathcal{C}-\mathcal{C}\mathcal{A}-\mathcal{A}\mathcal{B}=S_1^2-3S_2.\end{multline}

The following lemma allows us to determine the degree $[L:K]$ in terms of valuations of some invariants without needing to compute $L$. This is useful for the case distinction in Theorem \ref{thm:main}.

\begin{lemma}\label{lem:sethyper} In the setting above, let $\underline{I}\in K^4$ be a normalised set of invariants such that $Y(\underline{I})$  has potentially good hyperelliptic reduction to characteristic $p>3$. 
\begin{itemize}
\item[(a)] We have that $\nu(\Delta_a)=\nu(\Delta_b)=\nu(\Delta_c)=e$.
\item[(b)] If $[L:K]=1$ then   $\nu(Q)\equiv 0,2,4\pmod{6}$.  Moreover, if $\nu(Q)\not\equiv0$ then $\nu(Q)>3\nu(R)$.
\item[(c)] If $[L:K]=2$ then   $\nu(Q)\equiv 1,3,5\pmod{6}$. 
\item[(d)] If $[L:K]=3$ then   $\nu(Q)\equiv 2,4\pmod{6}$ and $\nu(Q)\leq3\nu(R)$. 
\end{itemize}
\end{lemma}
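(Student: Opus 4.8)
The lemma concerns a Ciani quartic with potentially good hyperelliptic reduction, so by Lemma \ref{lem:potgoodhyper} the normalised invariants satisfy \eqref{eq:valinvariants_hyper}, and by Remark \ref{rmk:roots}(a) the roots of $\mathcal{P}$ are $\mathcal{A}=Aa^2$, $\mathcal{B}=Bb^2$, $\mathcal{C}=Cc^2$ for a normalised standard $L$-model $Y_1$. The strategy is first to pin down the valuations of $\mathcal{A},\mathcal{B},\mathcal{C}$ (equivalently of $a,b,c$, since $\nu(A)=\nu(B)=\nu(C)=0$ as $\nu(ABC)=\nu(I_3)=0$), then to translate the degree $[L:K]$ into congruence/inequality conditions on $\nu(Q)$ and $\nu(R)$ via the standard behaviour of the discriminant and the elementary-symmetric-function reduction of a monic cubic.

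First I would prove part (a). The $2$-torsion discriminants are $\Delta_a=a^2-4BC$, $\Delta_b=b^2-4AC$, $\Delta_c=c^2-4AB$, whose product is $I_6$, so $\nu(\Delta_a)+\nu(\Delta_b)+\nu(\Delta_c)=\nu(I_6)=3e$. To get equality $\nu(\Delta_i)=e$ for each $i$ I would use the other invariant identities: $I_3''=-4ABC+Aa^2+Bb^2+Cc^2-abc$ has $\nu(I_3'')=2e$, and $I_3'=A\Delta_a+B\Delta_b+C\Delta_c$ has $\nu(I_3')\ge e$. Since the $A,B,C$ are units, a short case analysis on the (a priori unequal) valuations $\nu(\Delta_a),\nu(\Delta_b),\nu(\Delta_c)$ summing to $3e$, combined with the constraint coming from $\nu(I_3'')=2e$ (which forces $\nu(a^2),\nu(b^2),\nu(c^2)$, hence $2\nu(a),2\nu(b),2\nu(c)$, to be $\ge 2e$ with the right mixed terms), rules out all distributions except $(e,e,e)$. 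This is essentially the computation already underlying Lemma \ref{lem:potgoodhyper} in \cite{BCKKLS}, and I'd expect to cite or mirror it.

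Then I would do parts (b)--(d) together. Once $\nu(\mathcal{A})=\nu(\mathcal{B})=\nu(\mathcal{C})=2e$ (from (a), since $\mathcal{A}=Aa^2$ etc. and $\nu(a)=\nu(b)=\nu(c)=e$), write $\mathcal{A}=p^{2e}\alpha$, $\mathcal{B}=p^{2e}\beta$, $\mathcal{C}=p^{2e}\gamma$ with $\alpha,\beta,\gamma$ units. Then $Q=p^{12e}\cdot\operatorname{disc}(T^3-\tilde S_1T^2+\tilde S_2T-\tilde S_3)$ where $\tilde S_i$ are the elementary symmetric functions of $\alpha,\beta,\gamma$; set $Q'=Q/p^{12e}$, a unit-or-higher. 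The field $L$ is the splitting field of the monic integral cubic $\mathfrak{p}(T)=(T-\alpha)(T-\beta)(T-\gamma)$ over $K=\Q_p^{\nr}$, whose residue field is algebraically closed; hence $[L:K]$ equals the ramification contributed by the Newton polygon / the valuation pattern of the root differences. Concretely, over $K$ the cubic either splits ($[L:K]=1$), has a pair of roots congruent mod $\nu$ generating a quadratic ramified extension ($[L:K]=2$), or has all three roots congruent generating a cyclic cubic ($[L:K]=3$). The discriminant $Q'=\prod_{i<j}(\text{root}_i-\text{root}_j)^2$ has valuation $2\sum_{i<j}\nu(\text{root}_i-\text{root}_j)$: if $[L:K]=2$ the two ramified roots differ by valuation in $\tfrac12\Z\setminus\Z$ and the third differs by an integer, so $\nu(Q')$ is odd, i.e.\ $\nu(Q)=12e+\nu(Q')\equiv \nu(Q')\equiv 1,3,5\pmod 6$ (using $12e\equiv 0$); if $[L:K]=3$ all three root differences have valuation in $\tfrac13\Z\setminus\Z$ with total contributing $2\cdot 3\cdot(\text{non-integer}/3)\in\tfrac23\Z$, forcing $\nu(Q)\equiv 2,4\pmod 6$; and if $[L:K]=1$ the root differences have integral valuations, so $\nu(Q)$ is even, hence $\equiv 0,2,4\pmod 6$. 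The invariant $R=S_1^2-3S_2$, which equals $\tfrac12\big((\alpha-\beta)^2+(\beta-\gamma)^2+(\gamma-\alpha)^2\big)\cdot p^{4e}$ up to a unit (here $3$ is a unit since $p>3$), detects the \emph{minimal} root-difference valuation: $\nu(R)=4e+\min_{i<j}2\nu(\text{root}_i-\text{root}_j)$. Comparing $3\nu(R)$ with $\nu(Q)$ then distinguishes the split case with a coincidental extra congruence ($\nu(Q)\equiv 2,4$ but still $[L:K]=1$) from the genuinely cubic case: in the cubic case all three differences have the \emph{same} non-integral valuation, so $\nu(Q)=12e+6\cdot(\text{that valuation})$ while $\nu(R)=4e+2\cdot(\text{that valuation})$, giving $\nu(Q)=3\nu(R)$; whereas if $[L:K]=1$ with $\nu(Q)\not\equiv 0$, the three differences are not all equal (two small, one large, or similar), forcing $\nu(Q)<3\nu(R)$ — this is the inequality in (b), and the reverse $\nu(Q)\le 3\nu(R)$ in (d).

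The main obstacle I anticipate is the bookkeeping in part (a): extracting $\nu(\Delta_i)=e$ individually from the three invariant identities requires carefully excluding mixed valuation patterns, and one must be sure the $abc$ term in $I_3''$ and the cancellations there don't allow an unexpected distribution like $(0,e,2e)$ or $(0,0,3e)$; this is where the hypothesis of potentially good hyperelliptic reduction (rather than bad reduction) is really used, and it may be cleanest to quote the relevant normalisation step from \cite{BCKKLS} rather than redo it. The rest — parts (b)--(d) — is then a clean application of the Newton-polygon description of splitting fields of cubics over a field with algebraically closed residue field, together with the explicit formulas $\nu(Q)=12e+2\sum_{i<j}\nu(\mathcal{A}-\mathcal{B},\ldots)$ and $\nu(R)=4e+2\min_{i<j}\nu(\mathcal{A}-\mathcal{B},\ldots)$.
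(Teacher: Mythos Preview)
Your overall strategy for (b)--(d) --- compute $\nu(Q)$ and $\nu(R)$ in terms of the pairwise root differences of $\mathcal{P}$ and read off $[L:K]$ from their fractional parts --- is exactly the paper's approach. However, your execution contains two concrete errors.

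\textbf{First}, the claim $\nu(a)=\nu(b)=\nu(c)=e$ is false. Part (a) gives $\nu(\Delta_a)=e$ where $\Delta_a=a^2-4BC$; since $\nu(4BC)=0$ and $e>0$, this forces $\nu(a^2)=0$, hence $\nu(a)=0$ and $\nu(\mathcal{A})=\nu(Aa^2)=0$, not $2e$. The paper instead normalises to $A=B=C=1$ and writes $a=2+\pi^e a'$, $b=2+\pi^e b'$, $c=2+\pi^e c'$ with $a',b',c'$ units; then $\mathcal{A}-\mathcal{B}=(a-b)(a+b)$ has valuation $e+\nu(a'-b')$, giving $\nu(Q)=6e+2\sum\nu(a'-b',\dots)$ rather than your $12e+\cdots$. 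Since $6e\equiv 12e\equiv 0\pmod 6$ this slip does not affect the congruence conclusions, but it shows the normalisation is not what you think.

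\textbf{Second}, and more seriously, you have the inequality in (b) backwards. With two root-difference valuations equal to $m$ and one equal to $n>m$, your own formula $\nu(R)=\text{const}+2m$ (which is correct, using the product form $R=(\mathcal{A}-\mathcal{B})(\mathcal{A}-\mathcal{C})+\cdots$ so that the unique minimal term dominates) gives $3\nu(R)=3\cdot\text{const}+6m$, while $\nu(Q)=3\cdot\text{const}+2(2m+n)$, so $\nu(Q)-3\nu(R)=2(n-m)>0$. Thus $[L:K]=1$ with $\nu(Q)\not\equiv 0\pmod 6$ forces $\nu(Q)>3\nu(R)$, as stated in the lemma, not $\nu(Q)<3\nu(R)$. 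Conversely in the cubic case all three differences share the same valuation and cancellation in $R$ can only \emph{raise} $\nu(R)$, so one gets $\nu(Q)\le 3\nu(R)$.

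For part (a), your idea is right but the paper uses the invariant $I=AB\Delta_a\Delta_b+BC\Delta_b\Delta_c+CA\Delta_c\Delta_a$ (with $\nu(I)\ge 2e$ from \eqref{eq:Irel}) rather than $I_3''$: if the minimum $m<e$ of the $\nu(\Delta_i)$ were attained exactly once, $I_3'$ would have valuation $m<e$; if exactly twice, then the single term of valuation $2m$ in $I$ would force $\nu(I)=2m<2e$. Your route through $I_3''$ is less direct because $I_3''$ mixes $a^2,b^2,c^2$ and $abc$ rather than the $\Delta_i$ themselves.
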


\begin{proof} In order to prove (a), notice that $\nu(I_6)=\nu(\Delta_a\Delta_b\Delta_c)=3e$ but $\nu(I'_3)=\nu(A\Delta_a+B\Delta_b+C\Delta_c)\geq e$ with $\nu(I_3)=\nu(ABC)=0$, so the minimum of the valuations of the $\Delta_i$, if attained only once, cannot be smaller than $e$ and $\nu(I)=\nu(AB\Delta_a\Delta_b+BC\Delta_b\Delta_c+CA\Delta_c\Delta_a)\geq2e$ (see \ref{eq:Irel}), so if the minimum is smaller than $e$ it cannot be attained twice.

In order to prove the remaining statements, notice that $\nu(I_3)=0$ implies that we can assume $A=B=C=1$ and hence $a=2+\pi^ea'$, $b=2+\pi^eb'$ and $c=2+\pi^ec'$ with $\nu(a'b'c')=0$. So, $\nu(Q)=6e+2(\nu(a'-b')+\nu(b'-c')+\nu(c'-a'))$. If $L=K$ the last three valuations are integers, if $[L:K]=2$, then two of them are equal and belonging to $\mathbb{Z}_{\geq0}/2$ and the third one is in $1/2+\mathbb{Z}_{\geq0}$. If $[L:K]=3$, the three of them are equal and belong to $1/3+\mathbb{Z}_{\geq0}$ or $2/3+\mathbb{Z}_{\geq0}$. The claims about the valuation of $Q$ modulo $6$ follow.

Assume in the case $[L:K]=1$ and $\nu(Q)\not\equiv 0\pmod{6}$ that $b'$ and $c'$ are $p$-adically closest between them than to $a'$. Then $\nu(Q)=6e+6\nu(b'-a')+2(\nu(b'-c')-\nu(b'-a'))$ and $\nu(R)=2e+2\nu(a'-b')$. In the case $[L:K]=3$, $\nu(Q)=6e+6\nu(a'-b')$ and $\nu(R)\geq 2e+2\nu(a'-b')$. So we can distinguish both cases by looking at the conditions $\nu(Q)>3\nu(R)$ or $\nu(Q)\leq3\nu(R)$.
\end{proof}

\begin{proposition}\label{prop:goodhyp_new}
Let $\underline{I}=(I_3, I'_3, I''_3, I_6)\in K^4$  be a set of normalised Ciani invariants. Assume that $Y(\underline{I})$ has potentially good hyperelliptic reduction. Let $L/K$ be the splitting field of the polynomial $\mathcal{P}$ defined in Section \ref{sec:recons}. 
\begin{itemize}
    \item[(I)] 
Assume $[L:K]=1$ or $3$.
\begin{itemize}
    \item[(a)] If $e$ is even, $Y_1(\underline{I})$ has good hyperelliptic reduction over $L$. 
\item[(b)] If $e$ is odd, $Y_1(\underline{I})$ acquires good reduction over a  quadratic extension $L'/L$. 
\end{itemize}
\item[(II)] If $[L:K]=2$ then $Y_1(\underline{I})$ has good reduction over $L$.
\end{itemize}
\end{proposition}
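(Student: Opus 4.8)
The strategy is to make a change of variables on the standard $L$-model $Y_1(\underline{I})$ so as to rescale the coefficients to integers with the correct valuations, and then read off whether the reduction is smooth. Throughout we use the normalisation $A=B=C=1$, $a=2+\pi^e a'$, $b=2+\pi^e b'$, $c=2+\pi^e c'$ with $\nu(a'b'c')=0$ coming from $\nu(I_3)=0$ and \eqref{eq:valinvariants_hyper}, together with Lemma \ref{lem:sethyper}(a) which tells us $\nu(\Delta_a)=\nu(\Delta_b)=\nu(\Delta_c)=e$, where $\Delta_i=i^2-4$ for $i\in\{a,b,c\}$.

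\textbf{Case $[L:K]=1$ or $3$.} Here all three of $\nu(a'-b'),\nu(b'-c'),\nu(c'-a')$ lie in $\Z$ (if $[L:K]=1$) or in $\tfrac13+\Z$, $\tfrac23+\Z$ (if $[L:K]=3$); in either case the point is that $a\equiv b\equiv c\pmod{\pi^e}$ but the reductions of $a$, $b$, $c$ are all equal to $2$ modulo $\pi^e$ — i.e. the quartic $x^4+y^4+z^4+2(y^2z^2+x^2z^2+x^2y^2)=(x^2+y^2+z^2)^2$ is the ``naive reduction'' and is singular. To resolve this, I would pass to new coordinates adapted to the conic $x^2+y^2+z^2$: for instance substitute so that the quadratic form $x^2+y^2+z^2$ becomes, say, $2(uv+w^2)$ or a similarly normalised form, and then the quartic $Y_1$ becomes $(\text{conic})^2 + \pi^e(\text{quartic perturbation})$. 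If $e=2f$ is even, I would then divide the whole equation by $\pi^{2f}$ after the substitution $x=\pi^f x'$ etc. applied to the conic part — more precisely, the model $(x^2+y^2+z^2)^2 + \pi^e(a' y^2 z^2 + b' x^2 z^2 + c' x^2 y^2)$ should, after completing the square in the conic and the scaling coming from $\pi^{e/2}$, reduce to a hyperelliptic genus-$3$ curve (a double cover of the conic branched at the intersection with the perturbing quartic). One must then check that this reduction is smooth: this amounts to checking that the six branch points on the conic are distinct, which is exactly controlled by the condition that the three quadratics $p_a, p_b, p_c$ from the proof of Proposition \ref{lem:stable_model} have distinct roots, equivalently by $\nu(Q)$ and $\nu(R)$ being as in Lemma \ref{lem:sethyper}, and ultimately by $\Delta(\underline{I})$ giving $\nu(\Delta_a)=\nu(\Delta_b)=\nu(\Delta_c)=e$ with the $\Delta_i$ pairwise "generic". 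If $e$ is odd, $\pi^{e/2}\notin L$, so the same scaling requires a ramified quadratic extension $L'=L(\pi^{1/2})$, over which the identical computation produces good (hyperelliptic) reduction; this gives part (b).

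\textbf{Case $[L:K]=2$.} By Lemma \ref{lem:sethyper}(c) we have $\nu(Q)\equiv 1,3,5\pmod 6$, so exactly one of $\nu(a'-b'),\nu(b'-c'),\nu(c'-a')$ is a half-integer and the other two are equal (say $\mathcal B\equiv\mathcal C\pmod\nu$ but $\mathcal A\not\equiv\mathcal B$). Now the reduction behaves differently: two of the coefficients $a,b,c$ come together at a faster rate than they come together with the third, and the uniformiser of $L$ is itself ramified of degree $2$ over $K$. I would apply the change of variables from Table \ref{tab:reconsK}/Lemma \ref{lem:twists} in the $[L:K]=2$ case (the matrix diagonalising the $\sigma_d$-type element, involving $\mathcal B,\mathcal C$) to symmetrise, and then track the valuations: the extra ramification in $L/K$ supplies exactly the half-integer power of $\pi$ needed so that no further extension is required, and the reduction is smooth (good, in fact good hyperelliptic) already over $L$. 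Concretely I expect that after this substitution the model takes the shape $(\text{conic})^2 + \varpi\cdot(\text{quartic})$ with $\varpi$ a uniformiser of $L$ and $\nu_L(\varpi)$ arranged so that dividing through is possible over $L$ itself.

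\textbf{Main obstacle.} The routine part is the bookkeeping of valuations; the genuinely delicate step is verifying \emph{smoothness} of the reduced curve in each case — i.e. that the six branch points of the hyperelliptic reduction stay distinct. This is where Lemma \ref{lem:sethyper} and the hypothesis $\nu(\Delta(\underline{I}))$ giving $\nu(\Delta_a)=\nu(\Delta_b)=\nu(\Delta_c)=e$ must be used in an essential way, and one has to be careful that the leading coefficients $a', b', c'$ (and the relevant differences) are units modulo the appropriate uniformiser so that the reduction does not degenerate further. A secondary subtlety is confirming that the reduction is genuinely hyperelliptic of genus $3$ and not, say, a quartic or a singular/reducible curve; this follows because the conic $x^2+y^2+z^2$ acquires multiplicity $2$ in the limit, forcing the limiting curve to be a double cover of $\mathbb P^1$, but it should be spelled out. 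I would also need to confirm that in the $e$ odd subcase of (I) the quadratic extension is genuinely necessary (not merely sufficient), which follows from comparing discriminant valuations as in \eqref{eq:Delta2}.
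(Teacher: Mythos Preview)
Your strategy is the paper's strategy: normalise to $A=B=C=1$, rewrite $Y_1$ as $H^2+\pi^e G=0$ with $H=x^2+y^2+z^2$ and $G=a'y^2z^2+b'x^2z^2+c'x^2y^2$, and then extract $\pi^{e/2}$. The paper carries this out much more directly than you do, and in two places you make life harder for yourself than necessary.

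First, instead of your vague ``completing the square and scaling'', the paper writes down the explicit integral model over $\mathcal{O}_{L(\pi^{e/2})}$:
\[
t^2+G=0,\qquad \pi^{e/2}t-H=0,
\]
citing \cite[Prop.~1.2]{LLLR}. This is a standard resolution of a quartic degenerating to a double conic, and it makes the hyperelliptic nature of the special fibre $\{t^2+\overline{G}=0,\ \overline{H}=0\}$ transparent. Your main worry, smoothness of the reduction, is then dispatched by citing \cite[Lemma~14]{BCKKLS} rather than by analysing branch points by hand; that lemma already encodes the condition $\nu(a'b'c')=0$ and the genericity you are trying to extract from Lemma~\ref{lem:sethyper}.

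Second, your treatment of $[L:K]=2$ is overcomplicated. You do not need the matrices from Table~\ref{tab:reconsK} or any symmetrisation in $\cB,\cC$; the only point is that $L/K$ is totally ramified of degree $2$, so $L$ already contains an element of $K$-valuation $1/2$, hence $\pi^{e/2}\in L$ regardless of the parity of~$e$. The same model as above then has good reduction over $L$ itself. Your remark about ``the extra ramification in $L/K$ supplies exactly the half-integer power of $\pi$ needed'' is exactly this observation; the surrounding coordinate changes are irrelevant to the present proposition (they matter later for computing the inertia action and the conductor in Proposition~\ref{prop:hyperconductor}, not here).

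Finally, note that the proposition as stated does not claim minimality of the quadratic extension in (I)(b), so you need not argue necessity; that is deferred to the conductor computation.
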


\begin{proof}
Because of Lemma \ref{lem:sethyper}(a), and as we did in its proof, we can assume $A=B=C=1$ and work with the model:
\begin{equation}\label{eq:smoothhyper0}
Y_1:\; (x^2+y^2+z^2)^2+ \pi^e(a'y^2z^2+b'z^2x^2+c'x^2y^2)=0.
\end{equation}

Set $H(x,y,z)=x^2+y^2+z^2$ and $G(x,y,z)=a' y^2z^2+b' z^2x^2+ c' x^2y^2$. We are in the setting of \cite[Theorem 1.4]{LLLR}.
Let $\pi^e\in K$ be an element of valuation $e$. By \cite[Proposition 1.2]{LLLR}, a model  $\mathcal{Y}$ defined over the ring of integers of the extension $M=L(\pi^{e/2})$ of $L$  is given by
\begin{align}\label{eq:smoothhyper_new}
\left\lbrace
\begin{array}{c}
 t^2+G=0,\\
\pi^{e/2} t-H=0.
\end{array}
\right.
\end{align}

From \cite[Lemma 14]{BCKKLS}  it follows that the special fiber of $\mathcal{Y}$ is smooth.  

In the case that $e$ is even, we have that $M=L$, and it follows that $Y_1$ has already good reduction over $L$. In the case that $[L:K]=2,$ we have that $\pi^{e/2}\in L$ and hence that $M=L$, regardless of whether $e$ is odd or even.  
\end{proof}

\begin{proposition}\label{prop:hyperconductor}
Let $\underline{I}$ be a normalised set of invariants with $\Delta(\underline{I})\neq 0$, and assume that  $Y(\underline{I})$ has  good hyperelliptic  reduction. Let $e$ be as introduced in Equation (\ref{eq:valinvariants_hyper}).
\begin{itemize}
    \item[(a)] There exists a $K$-model $Y_0$ of $Y(\underline{I})$ with $f_p(Y_0)$ equal to: 
\end{itemize}

    \begin{table}[h!] \centering
\begin{tabular}{c|c|c|c|c|}
\multicolumn{2}{c|}{} & \multicolumn{3}{c|}{$[L:K]$} 
\\ \cline{3-5}
\multicolumn{2}{c|}{}  & 1 & 2 & 3
\\ \hline
 \multirow{2}{*}{$e$} & odd & 6 & 2 & 6
\\ \cline{2-5}
 & even & 0 & 4 & 4
 \\ \hline
\end{tabular}
\end{table}

\begin{itemize}
    \item[(b)] If the curve is non-special, then any $K$-model $Y_0$ of $Y(\underline{I})$ has the same conductor exponent.
\end{itemize}

\end{proposition}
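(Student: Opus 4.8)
The plan is to prove part (a) by producing an explicit $K$-model for each of the three cases $[L:K]=1,2,3$ and each parity of $e$, and then computing its conductor via Proposition \ref{prop:fp}. I would start from the standard model $Y_1(\underline{I})$ over $L$ given in \eqref{eq:smoothhyper0}, which by Proposition \ref{prop:goodhyp_new} acquires good (hyperelliptic) reduction over $M=L(\pi^{e/2})$ via the model $\mathcal{Y}$ in \eqref{eq:smoothhyper_new}. Composing with the descent isomorphism $\phi$ from Proposition \ref{prop:reconsK} (which depends on $[L:K]$), I obtain a $K$-model $Y_0(\underline{I})$ together with an explicit isomorphism $\psi\circ\phi\colon Y_0(\underline{I})\to \mathcal{Y}_M$, exactly as in the proof of Proposition \ref{prop:val_equal}. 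The extension $M/K$ is totally (tamely) ramified and cyclic by Corollary \ref{cor:boundLK}; its degree is $[L:K]$ if $e$ is even and $2[L:K]$ if $e$ is odd, except that in the $[L:K]=2$ case $\pi^{e/2}\in L$ already, so $M=L$ regardless of parity. I would then fix a generator $\sigma$ of $G=\Gal(M/K)$ and, using that the coordinate functions on $Y_0(\underline{I})$ are $K$-rational, compute $\sigma^\ast$ on the coordinates $(t,x,y,z)$ of \eqref{eq:smoothhyper_new}: $\sigma$ scales $t$ by $\zeta^{?}$ (from $\pi^{e/2}$) and permutes/scales $x,y,z$ according to the Galois action on $\mathcal A,\mathcal B,\mathcal C$ described in Section \ref{sec:twist}. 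This yields the induced automorphism $\bar\sigma$ of the smooth hyperelliptic reduction $\overline{Y}$.

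Next I would compute $g(\overline{Z})=g(\overline{Y}/\langle\bar\sigma\rangle)$ in each of the six cases by the Riemann--Hurwitz formula applied to the degree-$|G|$ quotient map $\overline{Y}\to\overline{Y}/G=\overline Z$; since $\overline{Y}$ is a genus-$3$ hyperelliptic curve with a known model, this is a finite ramification-point count. Then $f_p(Y_0)=2g(Y)-2g(\overline Z)=6-2g(\overline Z)$ by Proposition \ref{prop:fp}. The expected outcomes match the table: when $[L:K]=1$ and $e$ even, $M=K$, $\bar\sigma$ is trivial, $g(\overline Z)=3$, so $f_p=0$; when $e$ is odd, $\bar\sigma$ is the hyperelliptic-type involution $t\mapsto -t$ fixing no new structure, $g(\overline Z)=0$, $f_p=6$; the $[L:K]=2$ cases give $G$ of order $2$ with $\bar\sigma$ swapping two of $x,y,z$ (an automorphism in $D_4\setminus V$), producing $g(\overline Z)=1$ or $2$ hence $f_p=4$ or $2$; and $[L:K]=3$ gives $\bar\sigma$ of order $3$ (or $6$), with $g(\overline Z)=0$ giving $f_p=6$ when $e$ odd and $g(\overline Z)=1$ giving $f_p=4$ when $e$ even. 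I should double-check that the model $Y_0(\underline I)$ is the one with \emph{minimal} conductor among twists, which for part (a) simply means I must exhibit one model realizing the tabulated value; the choice coming from $\phi$ in Proposition \ref{prop:reconsK} does this.

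For part (b), I would invoke the twist classification of Lemma \ref{lem:twists}: when $Y(\underline I)$ is non-special, $\#\operatorname{Twist}(Y_0(\underline I)/K)$ is $4$, $2$, or $1$ according as $[L:K]=1,2,3$. For $[L:K]=3$ there is nothing to prove. For $[L:K]=1,2$ I would run the same computation as in part (a) but starting from each twist $Y_0'$ listed in Lemma \ref{lem:twists}(a),(b): each twist differs from $Y_0(\underline I)$ by a diagonal (or near-diagonal) matrix with a coordinate scaled by an element $\pi$ of fractional valuation, which enlarges $M$ but, crucially, the induced automorphism $\bar\sigma'$ of $\overline{Y}$ lies in the same conjugacy class inside $\Aut_k(\overline Y)$ (the extra scaling acts trivially on the reduction or by an automorphism already accounted for — note that on a hyperelliptic reduction the relevant new symmetry is the one coming from the Galois action on the roots, not from the uniformizer twist, because the hyperelliptic involution is already present). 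Hence $g(\overline{Y}/\langle\bar\sigma'\rangle)$ is unchanged and $f_p(Y_0')=f_p(Y_0(\underline I))$. The main obstacle will be the bookkeeping in part (b): correctly tracking how the fractional-valuation twists interact with the already-fractional exponent $\pi^{e/2}$ when $e$ is odd, showing that the \emph{stabilizer subgroup structure} and hence the genus of the quotient is twist-independent — this requires carefully writing out $\sigma^\ast$ on the integral model \eqref{eq:smoothhyper_new} for the twisted models and checking the order and fixed locus of $\bar\sigma'$ in each subcase, much as in Step 3 of the proof of Proposition \ref{prop:val_equal} but with the additional hyperelliptic coordinate $t$.
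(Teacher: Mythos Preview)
Your proposal is correct and follows essentially the same strategy as the paper: take the good-reduction model $\mathcal{Y}$ from Proposition \ref{prop:goodhyp_new}, descend to $K$ via the isomorphism $\phi$ of Proposition \ref{prop:reconsK}, read off the inertia action on the coordinates $(x,y,z,t)$ of the special fiber, and apply Proposition \ref{prop:fp}; for (b), repeat for each twist in Lemma \ref{lem:twists}. One small caution: your heuristic for (b) that the uniformizer twist ``acts trivially on the reduction because the hyperelliptic involution is already present'' is not by itself a proof---recall that in the quartic-reduction case (Proposition \ref{prop:smoothmodel_new}) the twists genuinely have different conductors---so the twist-independence here really does rely on the explicit case check you correctly flag as the main obstacle, and the paper likewise just says ``we repeat the computations'' rather than giving a structural reason.
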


\begin{proof} The proof goes in the same direction as the proof of Proposition \ref{prop:val_equal}. We start by computing an isomorphism between a $K$-model and a model with good reduction over a field extension. This allows to compute the action of the inertia and we can then apply Proposition \ref{prop:fp} to compute the conductor. 

As in the proof of Proposition \ref{prop:goodhyp_new} we can assume $A=B=C=1$ and we work with the model 
\begin{align*}
\mathcal{Y}:\,\left\lbrace
\begin{array}{c}
 t^2+a' y^2z^2+b' z^2x^2+ c' x^2y^2=0\\
\pi^{e/2} t-(x^2+y^2+z^2)=0,
\end{array}
\right.
\end{align*}
defined over  $M=L(\pi^{e/2})$  and that has good hyperelliptic reduction. 

The isomorphism $\phi$, defined in Proposition \ref{prop:reconsK}(a) for each case of $[L:K]$, applies to the variables $x,y,z$ of $\mathcal{Y}$ and it provides a model $\mathcal{Y}_0$ defined over $K$ with variables $(x_0,y_0,z_0)=\phi^{-1}(x,y,z)$  and $t_0=\pi^{e/2}t$. 

For each case depending on the degree $[L:K]$ and the parity of $e$, we compute the action of the inertia on $\mathcal{Y}$, and we use Proposition \ref{prop:fp} to compute the conductor exponent, as follows.

For $[L:K]=1$, the inertia is generated by $(\overline{x},\overline{y},\overline{z},\overline{t})\mapsto(\overline{x},\overline{y},\overline{z},\overline{t})$ or $(\overline{x},\overline{y},\overline{z},\overline{t})\mapsto(\overline{x},\overline{y},\overline{z},-\overline{t})$ depending on $e$ being even or odd, hence the conductor is $2\cdot3-2\cdot3=0$ or $2\cdot3-2\cdot0=6$.

For $[L:K]=2$, assuming $a'$ is the one element in $K$, the inertia is generated by $\sigma: (\overline{x},\overline{y},\overline{z},\overline{t})\mapsto(\overline{x},\overline{z},\overline{y},\pm\overline{t})$, and the conductor is then $2\cdot3-2\cdot1=4$ or $2\cdot3-2\cdot2=2$. We notice that the automorphism $\sigma$ has no fixed points if and only if $\sigma^\ast t=-t$, and this happens if and only if $e$ is odd. If $e$ is even, the automorphism $\sigma$ of $\overline{Y}$ has $4$ fixed points. This can be checked using the classification of automorphism groups of hyperelliptic curves in \cite[Section 3.1]{LR11}.

Finally, for $[L:K]=3$, the inertia is generated by $(\overline{x},\overline{y},\overline{z},\overline{t})\mapsto(\overline{y},\overline{z},\overline{x},\pm\overline{t})$ and the conductor is then $2\cdot3-2\cdot1=4$ or $2\cdot3-2\cdot0=6$, depending on whether $e$ is even or odd.

We repeat the computations for each of the twists of $\mathcal{Y}_0$ described in Lemma \ref{lem:twists}.
\end{proof}

\begin{remark}\label{rem:autospecial} Again, as in the case of potentially good quartic reduction (Remark \ref{rem:aut}), when $[L:K]>1$, we have that the special fiber, this time a hyperelliptic curve, has automorphisms that are not contained in the group generated by the fixed Ciani subgroup and the hyperelliptic involution.
\end{remark}

\section{Comparison between the conductor and the discriminant}\label{Sec:bounds}

Let  $(K, \nu)$ be a complete local field of characteristic zero with valuation $\nu$, whose residue field is an algebraically closed field $k$ of odd characteristic $p>0$. If $E/K$ is an elliptic curve, Ogg's formula \cite{Ogg} implies that
\begin{equation}\label{eq:ineq}
f_p(E)\leq \nu(\Delta(E)),
\end{equation}
where $\Delta(E)$ is the discriminant. It is natural to ask whether this inequality also holds for plane curves of arbitrary degree.  If the inequality  holds, a list of all curves with bounded discriminant as in \cite{Sutherland} also contains all curves whose conductor is bounded (by a certain slightly different bound). This inequality is discussed for example in \cite[Section 5]{BKSW} for Picard curves and in \cite{OS} for hyperelliptic curves.

Our results prove that \eqref{eq:ineq} hold in a rather special case. The following is a corollary of Theorem \ref{thm:main}.

\begin{corollary}
Let $K=\Q_p^{\nr}$ with $p>3$ and that $Y_0/K$ is a non-special Ciani curve with potentially good reduction at $p$.
Then 
\[
f_p(Y_0)\leq \nu(\Delta(Y_0)).
\]
\end{corollary}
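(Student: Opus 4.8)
The plan is to deduce the inequality directly from Theorem~\ref{thm:main} by a case analysis matching its structure, using that for a non-special Ciani curve with potentially good reduction the conductor is constant across all $K$-models (Propositions~\ref{prop:val_equal}(b) and~\ref{prop:hyperconductor}(b)), so it suffices to exhibit a single $K$-model $Y_0$ with $f_p(Y_0)\leq \nu(\Delta(Y_0))$, or equivalently to bound $f_p(Y_0)$ against $\nu(\Delta(Y_0))$ for the concrete models $Y_0(\underline{I})$ and their twists constructed in Section~\ref{sec:reconst}. First I would normalise the invariants, which is harmless here since the curve has potentially good reduction (Remark~\ref{normI6}), and recall $\Delta(\underline{I})=2^{20}I_3(I_3'')^4I_6^2$ with $p\neq 2$, so $\nu(\Delta(Y_0))$ depends only on $\nu(I_3),\nu(I_3''),\nu(I_6)$ up to the twisting factor coming from the change of model.

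In case (I) of Theorem~\ref{thm:main}, $p\nmid\Delta(\underline{I}_\nu)$, so $\nu(\Delta(\underline{I}))=0$; if moreover $\nu(Q)=0$ then by Proposition~\ref{prop:smoothmodel_new} there is a $K$-model $Y_2$ with $f_p(Y_2)=0\le 0=\nu(\Delta(Y_2))$, and for the other twists $f_p=4$, so I would check that the twisting matrices of Lemma~\ref{lem:twists} (which have a single entry $\pi$ with $\nu(\pi)=1/2$, hence $\nu(\det)=1/2$) raise the discriminant valuation by $36\cdot\tfrac12=18\geq 4$ via Lemma~\ref{trans}. If $\nu(Q)>0$, Proposition~\ref{prop:val_equal} gives $f_p=4$ for all $K$-models, and the same $\nu(\det)$-bookkeeping on $Y_0(\underline{I})$ (built from $\phi$ in Table~\ref{tab:reconsK}, whose determinant has valuation $\nu(\mathcal{B}-\mathcal{C})>0$ or $\tfrac12\nu(Q)>0$) shows $\nu(\Delta(Y_0))$ is a positive multiple of $36$ times a positive quantity, comfortably $\geq 4$; I would just make sure the minimal such model still has $\nu(\Delta)\geq 4$, which follows because any integral model has $\nu(\Delta)\geq$ the minimal discriminant valuation, and here that minimum is $\geq 4$ precisely when $p\mid Q$ — a point to verify carefully.

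In case (II), good hyperelliptic reduction, the valuations are $\nu(I_3)=0$, $\nu(I_3'')=2e$, $\nu(I_6)=3e$, so $\nu(\Delta(\underline{I}))=4\cdot 2e+2\cdot 3e=14e$ on the standard model, and more generally $\nu(\Delta(Y_0))\geq 14e$ for the $K$-models appearing (the twisting factors only increase it). I would then read off from the table in Proposition~\ref{prop:hyperconductor} that $f_p(Y_0)\in\{0,2,4,6\}$, and since $e\geq 1$ we have $14e\geq 14>6\geq f_p(Y_0)$ in every entry of the table, giving the inequality with substantial slack. Case (III) does not arise under the hypothesis of potentially good reduction, so no further work is needed there.

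The main obstacle, and the step I would spend the most care on, is the bookkeeping that relates $\nu(\Delta(Y_0))$ for the actual $K$-models $Y_0(\underline{I})$ and their twists to $\nu(\Delta(\underline{I}))$: one must track the determinant factors from the isomorphisms $\phi$ in Proposition~\ref{prop:reconsK} and the twist matrices in Lemma~\ref{lem:twists} through the transformation law $\Delta(\,^MF)=(\det M)^{36}\Delta(F)$ of Lemma~\ref{trans}, and confirm that no choice of integral model drops $\nu(\Delta)$ below the threshold $f_p$ computed in Propositions~\ref{prop:val_equal} and~\ref{prop:hyperconductor}. Since $36$ is large and the relevant determinant valuations are either $0$ (when the curve already has good reduction and $f_p=0$) or strictly positive (forcing $\nu(\Delta)\geq 18$ or $\geq 14$, both $\geq 6$), the inequality holds in every case, but making this uniform over the finitely many twist types in Lemma~\ref{lem:twists} is the part that requires genuine attention rather than a one-line appeal to Theorem~\ref{thm:main}.
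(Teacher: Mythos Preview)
Your plan is sound and would yield a correct proof, but it is more laborious than the paper's argument, which short-circuits the case analysis in two ways.

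First, rather than reading off the exact values $f_p\in\{0,2,4,6\}$ case by case from Propositions~\ref{prop:smoothmodel_new}, \ref{prop:val_equal} and~\ref{prop:hyperconductor}, the paper simply invokes Corollary~\ref{cor:boundLK} and Proposition~\ref{prop:fp} to obtain the uniform bound $f_p(Y_0)\le 6$ for \emph{every} $K$-model at once. Second, and this is the main simplification, instead of tracking the determinants of the specific matrices $\phi$ from Table~\ref{tab:reconsK} and the twist matrices from Lemma~\ref{lem:twists} one at a time, the paper observes that whenever $Y_0$ does \emph{not} have good reduction over $K$, the isomorphism $\psi\circ\phi: Y_0\otimes_K M\to Y_2(\underline{I})$ to the good model over $M$ has $\nu(\det(\psi\circ\phi))>0$, and since this determinant lies in $M$ with $[M:K]\nu(\det(\psi\circ\phi))\in\mathbb{Z}$, one gets $\nu(\det(\psi\circ\phi))\ge 1/[M:K]$. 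Lemma~\ref{trans} then gives $\nu(\Delta(Y_0))\ge 36/[M:K]\ge 6$, and the inequality follows. This single estimate replaces all of your per-twist bookkeeping.

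Your explicit route does buy something: it gives sharper information (e.g.\ $\nu(\Delta)\ge 14e$ in the hyperelliptic case, or $\nu(\Delta)\ge 18$ for the nontrivial twists in case~(I) with $\nu(Q)=0$), which the uniform $36/[M:K]$ bound does not see. But for the bare inequality $f_p\le\nu(\Delta)$ this extra precision is not needed. The one place where your plan is genuinely incomplete is the sentence ``here that minimum is $\ge 4$ precisely when $p\mid Q$ --- a point to verify carefully'': you correctly flag this as the crux, and the paper's $36/[M:K]$ argument is exactly what closes it without further case work.
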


 We note that every curve $Y_0$ occurs in Lemma \ref{lem:twists} as one of the $K$-twist for some tuple $[\underline{I}]\in \mathbb{P}^3_{1,1,1,2}(K)$ of Ciani invariants.  Moreover, since we assume that $Y_0$ is non-special, the conductor exponent $f_p(Y_0)$ only depends on the $\overline{K}$-isomorphism class and the field $K$. Replacing $K$ by a larger field decreases the conductor exponent in general. For this reason, we mostly  assumed that $K=\Q_p^{\nr}$ in this paper.

\begin{proof}
 Let $\underline{I}\in K^4$ be a normalised set of Ciani invariant such that $Y_0\otimes_K \overline{K}$ is isomorphic to $Y[\underline{I}]$. Since we assume that $Y_0$ has potentially good reduction, we can normalise the invariants over $K$, see Remark \ref{normI6}.  
 
Let $M/K$ be the extension over which $Y_0$ acquires good reduction, and let $Y_2(\underline{I})/M$ be the model with good reduction.  The extension $M/K$ is at most tamely ramified, Corollary \ref{cor:boundLK}. Therefore, Proposition \ref{prop:fp} implies that $f_p(Y_0)\leq 6.$

 We only consider the case that $Y_0$ has potentially good quartic reduction, but not good reduction over $K$.  Moreover, we assume that we are in case (a) of Proposition \ref{reconsquartic}. The case (b) and the case of potentially good hyperelliptic reduction are very similar.  Since $Y_2/M$ has good quartic reduction, the discriminant $\Delta(Y_2(\underline{I}))$ has valuation $0$, see \eqref{eq:Delta2}. The valuation of the determinant of the isomorphism  $\psi\circ \phi:Y_0(\underline{I})\otimes_K M\to Y_2(\underline{I})$ is positive, since we assume that $M\neq K$. Moreover, $[M:K]\nu(\det(\psi\circ\phi))$ is an integer. Using Lemma \ref{trans}, we conclude that
 \[
 \nu(\Delta(Y_0(\underline{I}))\geq \frac{36}{[M:K]}\geq 6. 
 \]
The statement of the corollary in this case follows.
\end{proof}

In \cite{BK94}, we find an upper bound on the conductor exponent only in terms of the genus and the ramification index. In our more special setup, we improve on the upper bound of Brumer--Kramer only for $p=5,7$. The reason is that a Ciani curve acquires stable reduction over a tame extension if $p>3$. For arbitrary curves of genus $3$ this only holds if $p>7.$

\bibliographystyle{alphaabbr}
\bibliography{synthbib}

\end{document}